% Date: Feb 14, 2014
\documentclass{amsart}
\linespread{1.15}

\usepackage{amsmath, amssymb, mathrsfs, verbatim, multirow}
\usepackage[all]{xy}
\usepackage{pifont}
\usepackage{float}

\newtheorem{Teo}{Theorem}[section]
\newtheorem{Prop}[Teo]{Proposition}
\newtheorem{Lema}[Teo]{Lemma}
\newtheorem{Cor}[Teo]{Corollary}

\theoremstyle{definition}

\newtheorem{Obs}[Teo]{Remark}
\newtheorem{Que}[Teo]{Question}
\newtheorem{Exa}[Teo]{Example}

\newtheorem{Con}[Teo]{Conjecture}

\newcommand{\Q}{\mathbb{Q}}

\newcommand{\Z}{\mathbb{Z}}
\newcommand{\N}{\mathbb{N}}

\newcommand{\Llr}{\Longleftrightarrow}
\newcommand{\lra}{\longrightarrow}
\newcommand{\Lra}{\Longrightarrow}
\newcommand{\VR}{\mathcal{O}}
\newcommand{\PI}{\mathfrak{p}}
\newcommand{\MI}{\mathfrak{m}}

\newcommand{\QF}{\mbox{\rm Quot}}

\begin{document}
\title[Essentially generation and invariants]{Essentially finite generation of valuation rings in terms of classical invariants}
\author{Steven Dale Cutkosky}
\thanks{Steven Dale Cutkosky was partially supported by NSF grant DMS-1700046.}

\author{Josnei Novacoski}
\thanks{During the realization of this project Josnei Novacoski was supported by a grant from Funda\c c\~ao de Amparo \`a Pesquisa do Estado de S\~ao Paulo (process number 2017/17835-9). Also, this work is partially result of an academic visit of Novacoski to the University of Missouri which was supported by a Miller Fellowship.}

\begin{abstract}
The main goal of this paper is to study some properties of an extension of valuations from classical invariants. More specifically, we consider a valued field $(K,\nu)$ and an extension $\omega$ of $\nu$ to a finite extension $L$ of $K$. Then we study when the valuation ring of $\omega$ is essentially finitely generated over the valuation ring of $\nu$. We present a necessary condition in terms of classic invariants of the extension  by Hagen Knaf and show that in some particular cases, this condition is also sufficient. We also study when the corresponding extension of graded algebras is finitely generated. For this problem we present an equivalent condition (which is weaker than the one for the finite generation of the valuation rings).
\end{abstract}

\keywords{Local uniformization, Essentially finitelly generated, invariants of valuations}
\subjclass[2010]{Primary 13A18}

\maketitle
\section{Introduction}
Let $(K,\nu)$ be a valued field, $L$ a finite extension of $K$ and $\omega$ an extension of $\nu$ to $L$. We denote the valuation rings of $\omega$ and $\nu$ by $\VR_\omega$ and $\VR_\nu$, respectively. We also define the graded algebras associated to $\omega$ and $\nu$ by
\[
\textrm{gr}_\omega(\VR_\omega)=\bigoplus_{\gamma\in\Gamma_\omega}\{x\in \VR_\omega\mid \omega(x)\geq \gamma\}/\{x\in \VR_\omega\mid \omega(x)> \gamma\}
\]
and
\[
\textrm{gr}_\nu(\VR_\nu)=\bigoplus_{\gamma\in\Gamma_\nu}\{x\in \VR_\nu\mid \nu(x)\geq \gamma\}/\{x\in \VR_\nu\mid \nu(x)> \gamma\},
\]
respectively, where $\Gamma_\omega$ and $\Gamma_\nu$ are the respective value groups of $\omega$ and $\nu$. We have a natural inclusion of graded domains $\textrm{gr}_\nu(\VR_\nu)\rightarrow \textrm{gr}_\omega(\VR_\omega)$ which is an integral extension, so that $\textrm{gr}_\omega(\VR_\omega)$ is a finitely generated $\textrm{gr}_\nu(\VR_\nu)$-algebra if and and only if $\textrm{gr}_\omega(\VR_\omega)$ is a finitely generated $\textrm{gr}_\nu(\VR_\nu)$-module.

A ring extension $A \subseteq B$ is called essentially finitely generated  if there exist $x_1,\ldots,x_r\in B$ such that
\[
B = S^{-1}A[x_1 ,\ldots, x_r]
\]
for some multiplicative set $S \subseteq A[x_1,\ldots,x_r]$. We also say that $B$ is essentially of finite type over $A$ if $B$ is essentially finitely generated over $A$.
This paper is devoted, mainly to  the following question:
\begin{Que}\label{mainque}
\begin{description}
\item[(i)] When is $\VR_\omega$ essentially finitely generated over $\VR_\nu$?
\item[(ii)] When is $\textrm{gr}_\omega(\VR_\omega)$ a finitely generated $\textrm{gr}_\nu(\VR_\nu)$-algebra?
\end{description}
\end{Que}

The main application that we have in mind is to the  problem of local uniformization in positive characteristic. One program to solve this problem is by using \textit{ramification theory}. In this direction, Knaf and Kuhlmann proved that every \textit{Abhyankar valuation} admits local uniformization (see \cite{KK}) and that every valuation admits local uniformization in a suitable finite separable extension of the function field (see \cite{KK_1}). In order to prove local uniformization for the extension $(L|k,\nu)$ they proved that, in each situation, one can find another field $K$, $k\subseteq K\subseteq L$ such that $(K|k,\nu)$ admits local uniformization and that $L$ is contained in the \textit{absolute inertia field of $K$}. In the process, they use that if $L$ is contained in the absolute inertia field of $K$, then the extension of the valuation rings is essentially finitely generated. Moreover, the property of having valuation rings essentially finitely generated seems to be necessary if one wants to lift local uniformization.

Another reason to study such problems is to understand the structure of valuation rings. Valuation rings, despite being almost never noetherian, seem to share important properties with some classes of noetherian local rings. For example, in \cite{Dat} it is proved that the Frobenius map is always flat for valuation rings. This work is motivated by a famous theorem of Kunz (see \cite{Kun}) that states that a noetherian local ring is regular if and only if the Frobenius map is flat.

It is known that if $R$ is an  excellent local domain with quotient field $K$, $L$ is a finite extension of $K$ and $D$ is the integral closure of $R$ in $L$, then $D$ is a finitely generated  R-module \cite[Scholie IV.7.8.3 (vi)]{EGAIV}. We are led to ask whether the same property holds for a valuation ring, i.e., if $\VR$ is a valuation ring with quotient field $K$ and $L$ a finite extension of $K$, then is the integral closure of $\VR$ in $F$ a finitely generated $\VR$-algebra?  If that is the case, then every valuation ring of $L$ extending $\nu$ is essentially finitely generated over $\VR_\nu$ (because all such valuation rings are of the form $D_\PI$ where $\PI$ is a prime ideal of the integral closure $D$ of $\VR$ in $L$). 

Question \ref{mainque} \textbf{(i)} has a positive answer in the case where $L$ lies in the absolute inertia field of $K$ (see \cite{Hens}). In \cite{Hens}, it was studied also, in the case where $L$ lies in the absolute inertia field of $K$, under which conditions the extension of valuation rings is finitely generated (i.e., when we do not need to localize at a prime ideal). In that paper it was also suggested that items \textbf{(i)} and \textbf{(ii)} of Question \ref{mainque} are closely related. In this paper, we will give characterizations of when this statements are true in terms of classic invariants of extensions of valuation rings.

Throughout this paper we will fix the following notations and assumptions:
\begin{equation}                           \label{sit}
\left\{\begin{array}{ll}
L|K & \mbox{is a finite field extension,}\\
\omega & \mbox{is a valuation on } L\\
\nu & \mbox{is the restriction of }\omega\mbox{ to }K,\\
\VR_\omega\mbox{ and }\VR_\nu &\mbox{are the valuation rings of }\omega\mbox{ and }\nu\mbox{, respectively,}\\
\Gamma_\omega\mbox{ and }\Gamma_\nu &\mbox{are the value groups of }\omega\mbox{ and }\nu\mbox{, respectively,}\\
F\omega\mbox{ and }K\nu &\mbox{are the residue fields of }\omega\mbox{ and }\nu\mbox{, respectively.}
\end{array}\right.
\end{equation}

The \textbf{ramification} and \textbf{inertia} indexes of the extension $\omega|\nu$  are defined as
\[
e(\omega|\nu)=(\Gamma_\omega:\Gamma_\nu)\mbox{ and }f(\omega|\nu)=[F\omega:K\omega],
\]
respectively. We also define the henselization of $(K,\nu)$ (for a fixed extension $\overline \nu$ of $\nu$ to an algebraic closure $\overline K$ of $K$) as the smallest field $K^h$ such that $K\subseteq K^h\subseteq \overline K$ and that $\overline \nu$ is the only extension of $\overline \nu|_{K^h}$ to $\overline K$ (\cite[Section 17]{End}). One can prove (see Corollary 5.3.8 and discussion on page 75 of \cite{resteengler}) that
\[
d(\omega|\nu)=\frac{\left[L^h:K^h\right]}{e(\omega|\nu)\cdot f(\omega|\nu)}
\]
is a positive integer which we will call the \textbf{defect} of $\omega|\nu$ (in fact, one can prove that $d(\omega|\nu)=1$ if ${\rm char}(K\nu)=0$ and $d(\omega|\nu)=p^n$ for some $n\in\N$ if ${\rm char}(K\nu)=p>0$).

For an extension $\Delta\subseteq\Gamma$ of ordered abelian groups, we define the \textbf{initial index} of $\Delta$ in $\Gamma$ \cite[page 138]{End} as
\[
\epsilon(\Gamma\mid\Delta):=|\{\gamma\in\Gamma_{\geq 0}\mid\gamma<\Delta_{>0}\}|,
\]
where
\[
\Gamma_{\geq 0}=\{\gamma\in \Gamma\mid\gamma\geq 0\}\mbox{ and }\Delta_{> 0}=\{\delta\in \Delta\mid\delta> 0\}.
\]
Hence, we can also define the initial index $\epsilon(\omega|\nu)$ of the extension $\omega|\nu$ as $\epsilon(\Gamma_\omega:\Gamma_\nu)$. 

If $S$ is a subsemigroup of an abelian semigroup  $T$, we will say that $T$ is a finitely generated $S$-module, or that $S$ has finite index in $T$ if there exist a finite number of elements $\gamma_1,\ldots,\gamma_t\in T$ such that 
$$
T=\bigcup_{i=1}^t(\gamma_i+S).
$$

We now consider the following statements:\\
\noindent\ding{182}. $\VR_\omega$ is essentially finitely generated over $\VR_\nu$. \\
\noindent\ding{183}. $\textrm{gr}_\omega(\VR_\omega)$ is finitely generated over $\textrm{gr}_\nu(\VR_\nu)$.\\
\noindent\ding{184}. $\left(\Gamma_{\nu}\right)_{\geq 0}$ has finite index in $\left(\Gamma_{\omega}\right)_{\geq 0}$.\\
\noindent\ding{185}. The integral closure $D$ of $\VR_\nu$ in $L$ is a finitely generated $\VR_\nu$-algebra.\\
\noindent\ding{186}. The integral closure $D$ of $\VR_\nu$ in $L$ is a finite module over $\VR_\nu$.\\
\noindent\ding{187}. $\epsilon(\omega|\nu)=e(\omega|\nu)$.\\
\noindent\ding{188}. $\epsilon(\omega|\nu)=e(\omega|\nu)$ and $d(\omega|\nu)=1$.\\
\noindent\ding{189}. $\epsilon(\omega_i|\nu)=e(\omega_i|\nu)$ and $d(\omega_i|\nu)=1$, where $\omega_1,\ldots,w_r$ are all the extensions of $\nu$ to $L$.

One of the main pourposes of this paper is to prove the implications in the following diagram:
\begin{displaymath}
\begin{array}{ccccc}
\mbox{\ding{185}}& \Llr & \mbox{\ding{186}}&\Llr &\mbox{\ding{189}}\\
\not\Uparrow\Downarrow && & &\not\Uparrow\Downarrow\\
\mbox{\ding{182}}&&\Longrightarrow &&\mbox{\ding{188}}  \\
 & & & &  \not\Uparrow\Downarrow \\
\mbox{\ding{183}}& \Llr& \mbox{\ding{184}}&\Llr & \mbox{\ding{187}}\\
\end{array}
\end{displaymath}

Observe that
\[
\mbox{\ding{189}}\Lra \mbox{\ding{188}}\Lra\mbox{\ding{187} and \ding{186}}\Lra\mbox{\ding{185}}
\]
are trivial and \ding{185} $\Lra$ \ding{182} follows from the fact that $\VR_\omega=D_{D\cap \MI\omega}$. Also, the counter-examples for \ding{182} $\not\Rightarrow$ \ding{185}, \ding{187} $\not\Rightarrow$ \ding{188} and \ding{188} $\not\Rightarrow$ \ding{189} are not difficult and will be presented in Section \ref{proofofdieamg}.

The equivalence \ding{189} $\Llr$ \ding{186} is Theorem 18.6 of \cite{End} and we will show \ding{185} $\Lra$ \ding{186} in Section \ref{proofofdieamg} (Proposition \ref{modvsalg}). The equivalences
\[
\mbox{\ding{183}} \Llr \mbox{\ding{184}}\Llr \mbox{\ding{187}}
\]
are the main subject of Section \ref{initialindex} and the implication \ding{182} $\Lra$ \ding{188} is the main subject of Section \ref{Necessity}.  We present in Section \ref{Necessity} a proof of this implication by Hagen Knaf.

It is proven in \cite{Hens} that if $L$ lies in the absolute inertia field of $(K,\nu)$ (this is equivalent to $e(\omega|\nu)=1$, $d(\omega|\nu)=1$ and that $L\omega|K\nu$ is separable), then $\VR_\omega$ is essentially finitely generated over $\VR_\nu$. Also, the condition $\epsilon(\omega|\nu)=e(\omega|\nu)$ and $d(\omega|\nu)=1$ implies that $\VR_\omega$ is essentially finitelly generated over $\VR_\nu$ in the cases where $L|K$ is normal and when $\omega$ is the only extension of $\nu$ to $L$ (see Corollary \ref{Endler1}). Hence, we are led to believe that this holds in general, i.e., that the following conjecture is true.

\begin{Con}[\ding{188} $\Lra$ \ding{182}]\label{MainConj}
Assume that we are in situation 
\begin{equation}                           \label{sit2}
\left\{\begin{array}{l}
\mbox{We are in situation (\ref{sit}); and}\\
d(\omega|\nu)=1 \mbox{ and }\epsilon(\omega\mid\nu)=e(\omega\mid\nu)
\end{array}\right.
\end{equation}
Then $\VR_\omega$ is essentially finitely generated over $\VR_\nu$.
\end{Con}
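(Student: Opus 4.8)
The plan is to reduce to the henselian case and then to build $\VR_\omega$ explicitly as a localization of a finitely generated $\VR_\nu$-subalgebra. First I would pass to the henselizations: let $K^h$ be the henselization of $(K,\nu)$ inside a fixed algebraic closure, and $L^h = LK^h$ the corresponding henselization of $(L,\omega)$. Since $d(\omega|\nu)=1$, the hypothesis gives $[L^h:K^h] = e(\omega|\nu)f(\omega|\nu)$, and under $\epsilon(\omega|\nu)=e(\omega|\nu)$ the extension $\Gamma_\nu \subseteq \Gamma_\omega$ is, in a precise sense, ``as tame as possible'' on the positive cone: the finitely many coset representatives of $\Gamma_\omega/\Gamma_\nu$ can be chosen to be an \emph{initial segment}, i.e.\ smaller than every positive element of $\Gamma_\nu$. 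I would first establish, as the key structural lemma, that in the henselian setting conditions (\ref{sit2}) force $\VR_{\omega^h}$ to be generated over $\VR_{\nu^h}$ by $e\cdot f$ elements of a very controlled shape — roughly $f$ units whose residues span $F\omega$ over $K\nu$, times $e$ elements $x$ with $\omega(x)$ running over the chosen initial-segment coset representatives — together with one inverted element coming from the denominator needed to realize the residue field extension. This is essentially the computation already carried out in \cite{Hens} for the inertial case, extended to allow nontrivial $e$ and $f$, using that $\epsilon=e$ makes the ramification contribute no ``infinite descent.''

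The second step is descent from $L^h|K^h$ back to $L|K$. Here I would use the standard fact that $\VR_{\nu^h}$ is a filtered direct limit (indeed an \'etale/ind-\'etale limit) of localizations of finite \'etale $\VR_\nu$-subalgebras, so that any finite presentation of $\VR_{\omega^h}$ over $\VR_{\nu^h}$ descends to a finite presentation over some such intermediate ring, and then intersecting with $L$ recovers $\VR_\omega$ up to a localization. Concretely: the generators $x_1,\dots,x_r$ of $\VR_{\omega^h}$ over $\VR_{\nu^h}$ together with their algebraic relations involve only finitely many coefficients, hence live over a finite subextension; choosing that subextension inside $L$ (possible because $[L^h:K^h]$ already equals $e\cdot f$, so $L^h$ and $K^h$ ``see'' the same splitting data as $L$ and $K$) yields $\VR_\omega = S^{-1}\VR_\nu[y_1,\dots,y_s]$. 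The cases handled in Corollary~\ref{Endler1} (namely $L|K$ normal, and $\omega$ the unique extension) are exactly the cases where this descent is transparent, so the novelty is only in the general splitting behaviour.

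The main obstacle, and the reason this remains a conjecture rather than a theorem, is precisely this descent when $\nu$ has several extensions to $L$ and $L|K$ is not normal: the henselization can split $L$ into a product of local fields whose individual defect-free/initial-index data is governed by \ding{189}, not by \ding{188}, and there is genuine tension — reflected in the counterexample \ding{188} $\not\Rightarrow$ \ding{189} promised in Section~\ref{proofofdieamg} — between ``$\VR_\omega$ finitely generated after localizing at one maximal ideal'' (which is \ding{182}) and ``$D$ finitely generated globally'' (which is \ding{185}/\ding{186}). So the delicate point is to show that a finite presentation is available \emph{at the single prime $D\cap\MI\omega$} even though no global finite presentation of $D$ need exist; I expect this to require carefully tracking which of the finitely many initial-segment coset representatives of $\Gamma_\omega/\Gamma_\nu$ actually ``survive'' in the localization and arguing that the remaining, possibly infinitely generated, part of $D$ is inverted by $S$. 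Making that survival argument rigorous — rather than merely plausible from the inertial and normal special cases — is the heart of what a complete proof of Conjecture~\ref{MainConj} must supply.
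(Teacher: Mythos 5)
The statement is, in the paper, a \emph{conjecture} that the authors explicitly do not prove in general; they only reduce it to the separable case (Section~\ref{Secredsep}) and establish it under extra hypotheses: $\nu$ centered on an excellent two-dimensional local domain (Theorem~\ref{Teocutko}) and $\nu$ Abhyankar (Theorem~\ref{TeoAbh}). You correctly recognize this, and your diagnosis of the difficulty --- the tension between \ding{188}, which concerns only $\omega$, and \ding{189}, which governs \emph{all} extensions $\omega_i$ --- matches the obstruction the paper signals through its diagram and the cubic example $X(X-1)^2-a$ in Section~\ref{proofofdieamg}. However, your outline has both a redundancy and a gap. The ``key structural lemma'' you propose for the henselian case is not new: since $\omega^h$ is the unique extension of $\nu^h$ to $L^h$, Corollary~\ref{Endler1}(i) together with the invariance of $d$, $e$, $\epsilon$ under henselization already shows that $\VR_{\omega^h}$ is a localization of a finite $\VR_{\nu^h}$-module; no further computation in the style of \cite{Hens} is needed. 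The gap is entirely in your descent step, as you acknowledge, and even the sketch of it is not sound: the generators of $\VR_{\omega^h}$ over $\VR_{\nu^h}$ lie in $L^h$, which need not be contained in $L$, and the coefficients lie in $K^h$, not $K$; the finite \'etale $\VR_\nu$-subalgebras you would use for descent sit inside $K^h$, not inside $L$. So ``choosing that subextension inside $L$'' is not something one can do for free, and supplying a way to do it is precisely where the gap between \ding{188} and \ding{189} bites. Your proposal thus correctly locates the open problem but does not close it.

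For the record, the paper's partial results take a completely different, resolution-theoretic route that never passes through henselization. In both Theorem~\ref{Teocutko} and Theorem~\ref{TeoAbh}, one builds nested sequences of regular local rings $R_i\rightarrow S_i$ dominated by $\nu$ and $\omega$ (iterated quadratic transforms together with the ramification results of \cite{Ramif} in the two-dimensional excellent case; primitive monoidal transforms together with monomialization via Theorem~\ref{TheoremAbU} in the Abhyankar case), arranged so that $\bigcup_i R_i = \VR_\nu$, $\bigcup_i S_i = \VR_\omega$, and $S_i = R_i[z_1,\ldots,z_r]_\omega$ for a \emph{fixed} finite set $z_1,\ldots,z_r$, which yields $\VR_\omega = \VR_\nu[z_1,\ldots,z_r]_\omega$ directly. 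The hypothesis $\epsilon(\omega|\nu)=e(\omega|\nu)$ enters through Corollary~\ref{sgroupcor} and Proposition~\ref{PropGS}, which force the map on regular parameters into the monomial normal form $x_1=\gamma y_1^e$, $x_i=y_i$; this is a constructive use of \ding{188} quite different from a henselization-descent argument. In the full generality of Conjecture~\ref{MainConj}, the descent problem you identify remains genuinely open.
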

In Section \ref{Secredsep} we will show that in order to prove the conjecture above, we can assume that $L|K$ is separable, i.e., that it is enough to show the following conjecture.

\begin{Con}\label{Conjectsep}
Assume that we are in situation (\ref{sit2}) and that $L|K$ is separable. Then $\VR_\omega$ is essentially finitely generated over $\VR_\nu$.
\end{Con}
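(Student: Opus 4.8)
The plan is to reduce, by passing to the henselization, to the case where $\nu$ has a unique extension to $L$ --- which is covered by Corollary~\ref{Endler1} --- and then to descend the conclusion back to $K$. Fix an extension $\overline\nu$ of $\nu$ to $\overline K$ refining $\omega$ and form $K^h$ and $L^h=L\cdot K^h$ inside $\overline K$, with induced valuations $\nu^h$ and $\omega^h$. Since the henselization is an immediate extension, $\Gamma_{\nu^h}=\Gamma_\nu$, $\Gamma_{\omega^h}=\Gamma_\omega$ and the residue fields are unchanged, so $e(\omega^h|\nu^h)=e(\omega|\nu)$, $f(\omega^h|\nu^h)=f(\omega|\nu)$ and $\epsilon(\omega^h|\nu^h)=\epsilon(\omega|\nu)=e(\omega|\nu)$; moreover $K^h$ being henselian, $L^h$ is henselian and $\omega^h$ is the unique extension of $\nu^h$ to $L^h$, and since $[L^h:K^h]=e(\omega|\nu)f(\omega|\nu)d(\omega|\nu)=e(\omega|\nu)f(\omega|\nu)$ we get $d(\omega^h|\nu^h)=1$. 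Thus $(\ref{sit2})$ holds for $\omega^h|\nu^h$ and $\omega^h$ is the only extension of $\nu^h$ to $L^h$, so Corollary~\ref{Endler1} gives that $\VR_{\omega^h}$ is essentially finitely generated over $\VR_{\nu^h}$, say $\VR_{\omega^h}=S^{-1}\VR_{\nu^h}[x_1,\ldots,x_r]$ with $x_1,\ldots,x_r\in\VR_{\omega^h}$.

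For the descent, write $K^h=\varinjlim_\alpha K_\alpha$ as the filtered union of the finite subextensions $K_\alpha|K$ inside $K^h$, set $L_\alpha=L\cdot K_\alpha$, and let $\nu_\alpha,\omega_\alpha$ be the restrictions of $\nu^h,\omega^h$; then $\VR_{\nu^h}=\varinjlim_\alpha\VR_{\nu_\alpha}$ and $\VR_{\omega^h}=\varinjlim_\alpha\VR_{\omega_\alpha}$. A first useful observation is that each $K_\alpha$ lies in the absolute inertia field of $(K,\nu)$: indeed $K_\alpha\subseteq K^h$ forces $e(\nu_\alpha|\nu)=1$ and $d(\nu_\alpha|\nu)=1$, and the residue extension $K\nu_\alpha|K\nu$ is trivial, hence separable. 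By \cite{Hens} the ring $\VR_{\nu_\alpha}$ is therefore essentially finitely generated over $\VR_\nu$, so by transitivity of essential finite generation it would suffice to produce, for some $\alpha$, a presentation of $\VR_{\omega_\alpha}$ as essentially finitely generated over $\VR_{\nu_\alpha}$ and then to pass from $\VR_{\omega_\alpha}$ to $\VR_\omega=\VR_{\omega_\alpha}\cap L$. Taking $\alpha$ large enough that $x_1,\ldots,x_r\in\VR_{\omega_\alpha}$ and $[L_\alpha:K_\alpha]=[L^h:K^h]$, and letting $R_\alpha\subseteq\VR_{\omega_\alpha}$ be the localization of $\VR_{\nu_\alpha}[x_1,\ldots,x_r]$ at the prime $\VR_{\nu_\alpha}[x_1,\ldots,x_r]\cap\MI_{\omega_\alpha}$, one gets, localization commuting with filtered colimits, that $\varinjlim_\alpha R_\alpha=\VR_{\omega^h}=\varinjlim_\alpha\VR_{\omega_\alpha}$, so every element of every $\VR_{\omega_{\alpha_0}}$ eventually lies in some $R_\alpha$.

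I expect the main obstacle to be precisely the step from ``every element eventually lies in some $R_\alpha$'' to ``$R_\alpha=\VR_{\omega_\alpha}$ for some fixed $\alpha$'', and then to $\VR_\omega$. Since valuation rings are almost never noetherian, there is no finite set of generators of $\VR_{\omega_\alpha}$ whose membership in $R_\beta$ one can simply push forward to conclude $R_\beta=\VR_{\omega_\beta}$; and essential finite generation is not in general inherited by the intermediate ring $\VR_\omega$ of an extension $\VR_\nu\subseteq\VR_\omega\subseteq\VR_{\omega_\alpha}$. In effect one is asking for a faithfully flat descent statement for essential finite generation along the infinite faithfully flat base change $\VR_\nu\hookrightarrow\VR_{\nu^h}$, which is not known at this level of generality; some genuinely new finiteness input along the henselization tower seems to be needed, which is why the statement remains a conjecture.

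A more hands-on alternative, which avoids the henselization, works directly with the associated graded rings. Under $(\ref{sit2})$ the hypothesis $\epsilon(\omega|\nu)=e(\omega|\nu)$ is statement \ding{187}, so by \ding{187}~$\Llr$~\ding{183} the ring $\textrm{gr}_\omega(\VR_\omega)$ is a finitely generated $\textrm{gr}_\nu(\VR_\nu)$-algebra; choose homogeneous generators and lift them to $x_1,\ldots,x_r\in\VR_\omega$ whose initial forms generate. Given $z\in\VR_\omega$, write its initial form as a $\textrm{gr}_\nu(\VR_\nu)$-combination of monomials in the initial forms of the $x_i$, lift that relation to $\VR_\omega$, subtract, and recurse; the resulting expansion converges in $\VR_{\omega^h}$, and the role of the defectlessness $d(\omega|\nu)=1$ should be exactly to force it to terminate after finitely many steps, yielding an honest element of $\VR_\nu[x_1,\ldots,x_r]$ localized at a prime. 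Making the termination effective --- bounding the number of steps in terms of the data of the extension --- is, once more, the heart of the matter.
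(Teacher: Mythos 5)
This is a Conjecture in the paper: the authors do not prove it, and you correctly recognize your writeup as an analysis of obstructions rather than a proof. Your diagnosis of both approaches is accurate. In the henselization route, passing to $K^h$, $L^h$ does put one in the single-extension situation where Corollary~\ref{Endler1} applies, but the ensuing descent problem is the whole difficulty: $\VR_\omega=\VR_{\omega^h}\cap L$, and there is no known mechanism for descending ``essentially of finite type'' through the non-finite faithfully flat extension $\VR_\nu\rightarrow\VR_{\nu^h}$, nor from a ring $\VR_{\omega_\alpha}$ down to the intersection $\VR_\omega$. In the graded route, \ding{187} $\Rightarrow$ \ding{183} supplies lifts $x_1,\ldots,x_r$ whose initial forms generate $\textrm{gr}_\omega(\VR_\omega)$ over $\textrm{gr}_\nu(\VR_\nu)$, but the recursion you describe converges only in the henselization, and showing that $d(\omega|\nu)=1$ forces it to terminate after finitely many steps is exactly the unresolved content. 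So there is no gap to point out relative to the paper's proof, because the paper offers none for this statement.

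What the paper does prove are the special cases Theorem~\ref{Teocutko} (valuation centered in an excellent two-dimensional local domain) and Theorem~\ref{TeoAbh} (Abhyankar valuation), and the mechanism there differs from both of your sketches. Local uniformization, available in those settings, produces compatible towers of iterated quadratic or monoidal transforms $R_i\rightarrow S_i$ with $\bigcup_i R_i=\VR_\nu$ and $\bigcup_i S_i=\VR_\omega$, together with a monomial normal form ($x_1=\gamma y_1^e$, $x_i=y_i$ for $i\ge 2$) that is stable along the towers; from this one gets a single finite set $z_1,\ldots,z_m$ with $S_i=R_i[z_1,\ldots,z_m]_\omega$ for every $i$, hence $\VR_\omega=\VR_\nu[z_1,\ldots,z_m]_\omega$. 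The uniformity of that presentation along the tower is precisely what substitutes for the termination/descent principle you correctly identify as missing in general. If you want to push this further, the productive direction is to look for the additional geometric input (resolution, embedded uniformization, or a suitable analogue) that yields such stable monomial forms, rather than an abstract descent argument.
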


One of the main results of this paper is that Conjecture \ref{MainConj} is satisfied for the case where $\nu$ is centered in a quasi-excellent local domain of dimension two, or if 
$\nu$ is an Abhyankar valuation.

If $R$ is a local domain which is contained in $K$, then we say that $\nu$ is centered in $R$ is $R\subset \mathcal O_\nu$ and $m_\nu\cap R=m_R$.

\begin{Teo}\label{Teocutko}
Assume that $R$ is an excellent two-dimensional local domain with quotient field $K$. Suppose that $\nu$ is a valuation of $K$ centered at $R$. Assume that $L$ is a finite separable extension of $K$ and that $\omega$ is an extension of $\nu$ to $L$. If $d(\omega\mid\nu)=1$ and $\epsilon(\omega\mid\nu)=e(\omega\mid\nu)$, then $\VR_\omega$ is essentially finitely generated over $\VR_\nu$.
\end{Teo}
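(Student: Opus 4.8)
The plan is to exploit resolution of singularities for excellent two-dimensional local domains together with the characterization of essentially finite generation in terms of the classical invariants established earlier. The first step is to reduce to the case where $R$ is a regular (in fact normal) local ring: since $R$ is excellent and two-dimensional, we may replace $R$ by a birational proper model and pass to a local ring $R'$ of a resolution of singularities dominated by $\nu$; this does not change $K$, $\nu$, $L$, $\omega$ or any of the invariants $e(\omega\mid\nu)$, $f(\omega\mid\nu)$, $d(\omega\mid\nu)$, $\epsilon(\omega\mid\nu)$, and $\VR_\omega$ is essentially finitely generated over $\VR_\nu$ regardless of which local domain in $K$ with the same quotient field we start from. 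So from now on $R$ is a two-dimensional regular local ring, and by further blowing up we may assume that the center of $\nu$ on the relevant model is as nice as possible.

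The second, and main, step is a case analysis according to the rational rank and transcendence degree of $\nu$ (equivalently, according to the combinatorial type of the valuation in dimension two). If $\nu$ is an Abhyankar valuation — i.e.\ $\operatorname{rat.rk}(\nu) + \operatorname{trdeg}(K\nu/k) = \dim R$ — then the Abhyankar case of Conjecture \ref{MainConj}, which we are assuming as a separately proved result of this paper, applies directly and gives the conclusion. If $\nu$ is \emph{not} Abhyankar, then in dimension two the only possibility is that $\nu$ has rational rank one with value group a non-discrete subgroup of $\R$ and trivial residue field extension over $k$, or rational rank one, discrete, with $\operatorname{trdeg}(K\nu/k)=0$; in the genuinely interesting subcase $\Gamma_\nu$ is a dense rank-one group. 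Here I would use the hypothesis $d(\omega\mid\nu)=1$ together with $\epsilon(\omega\mid\nu)=e(\omega\mid\nu)$: by the earlier results of the paper (the equivalences $\text{\ding{183}}\Leftrightarrow\text{\ding{184}}\Leftrightarrow\text{\ding{187}}$, and the normal/unique-extension cases of Corollary \ref{Endler1}), it suffices to produce, after finitely many blow-ups of $R$, a single monomial-type generator, or to reduce to a situation where $\omega$ is the unique extension of $\nu$ to $L$. The key device is that over a two-dimensional regular local ring one has an essentially complete understanding of the value semigroup of $\nu$ via generating sequences (key polynomials / Spivakovsky-type sequences), so one can track explicitly how $\Gamma_\omega$ sits over $\Gamma_\nu$ and realize a set of generators of the $\VR_\nu$-algebra $\VR_\omega$ among the elements of such a generating sequence for $\omega$.

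The third step is to assemble these pieces: using $\epsilon(\omega\mid\nu)=e(\omega\mid\nu)$ one shows that $(\Gamma_\nu)_{\geq 0}$ has finite index in $(\Gamma_\omega)_{\geq 0}$ (this is \ding{187} $\Rightarrow$ \ding{184}), hence $\operatorname{gr}_\omega(\VR_\omega)$ is a finitely generated $\operatorname{gr}_\nu(\VR_\nu)$-module; then, granting $d(\omega\mid\nu)=1$, one lifts a homogeneous module basis of $\operatorname{gr}_\omega(\VR_\omega)$ to elements $x_1,\dots,x_r\in\VR_\omega$ and argues, using the regularity of the two-dimensional base and the absence of defect, that $\VR_\omega$ is the localization of $\VR_\nu[x_1,\dots,x_r]$ at the contraction of $\MI_\omega$ — the no-defect hypothesis is exactly what makes the passage from the associated graded ring back to the ring itself work, since defect is the obstruction to such a lift. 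I expect the main obstacle to be precisely this last lifting argument in the non-Abhyankar, dense-value-group case: controlling, after finitely many blow-ups, that no new "limit" behaviour of the value semigroup obstructs finite generation, and ruling out that $\omega$ could split into several extensions in a way not covered by Corollary \ref{Endler1}. Handling that is likely to require the two-dimensional resolution machinery (Abhyankar, Cossart–Piltant) and a careful analysis of generating sequences, and is where the hypothesis that $\dim R = 2$ is genuinely used.
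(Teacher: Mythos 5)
Your proposal correctly identifies the general shape of the argument (reduce to a regular two‑dimensional model dominated by $\nu$, then exploit the explicit structure of valuations in dimension two via blowups), but it has two genuine gaps that prevent it from being a proof.

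First, your treatment of the Abhyankar case is circular/misapplied. You propose to invoke Theorem~\ref{TeoAbh} directly, but that theorem requires $K$ to be an algebraic function field over a field $k$ and $\VR_\omega/\mathfrak m_\omega$ to be separable over $k$; Theorem~\ref{Teocutko} makes no such hypothesis (an excellent two‑dimensional local domain need not be a localization of a finitely generated $k$-algebra, and no base field $k$ is in play). So one cannot simply defer the Abhyankar subcases to that theorem. In the paper's proof the Abhyankar cases (rank $2$, and rank $1$ with ${\rm ratrk}=2$ or with transcendental residue field extension) are handled by exactly the same machinery as the others, namely the strong monomialization/stable-form results of \cite{Ramif}.

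Second, and more seriously, your concluding step---``lift a homogeneous module basis of $\operatorname{gr}_\omega(\VR_\omega)$ to elements $x_1,\dots,x_r$ and argue that $\VR_\omega=\VR_\nu[x_1,\dots,x_r]_\omega$ because there is no defect''---is precisely the unresolved content of Conjecture~\ref{MainConj}. Passage from finite generation of the graded algebra (which, via \ding{187}$\Rightarrow$\ding{184}$\Rightarrow$\ding{183}, follows from the hypotheses in any dimension) back to essential finite generation of the valuation rings is not a formal lifting, and the paper gives no such lemma; if it existed, the conjecture would be proved in full generality with no dimension restriction. What the paper actually does is quite different and uses the dimension-two hypothesis in an essential, constructive way: by Theorems 3.3 and 3.7 of \cite{Ramif} (using $d(\omega|\nu)=1$), one finds dominated regular local rings $R_0\to S_0$ with a stable monomial form relating their regular parameters; the hypothesis $\epsilon=e$ enters through Corollary~\ref{sgroupcor} to normalize that form; and then one constructs commuting towers of iterated quadratic transforms $R_0\to R_1\to\cdots$ and $S_0\to S_1\to\cdots$ along $\nu$ and $\omega$ which keep the form stable and keep $S_i=R_i[z_1,\ldots,z_r]_\omega$. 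Since $\VR_\nu=\bigcup R_i$ and $\VR_\omega=\bigcup S_i$ by \cite[Lemma 12]{Ab}, this yields $\VR_\omega=\VR_\nu[z_1,\ldots,z_r]_\omega$. Your sketch gestures at ``generating sequences / Spivakovsky-type key polynomials,'' but the paper never uses key polynomials here; the crucial input you are missing is the stable monomialization of $R_0\to S_0$ from the ramification paper \cite{Ramif} together with the explicit, finitely many extra generators $z_1,\ldots,z_r$ that persist under blowup.
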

Section \ref{Sectproofmanotehm} is devoted to the proof of  Theorem \ref{Teocutko}.

Let $K$ be an algebraic function field over a field $k$. A valuation $\nu$ on $K$ (which is trivial on $k$) satisfies the fundamental inequality (\cite[Lemma 1]{Ab})
$$
{\rm trdeg}_kK\ge {\rm trdeg}_k\VR_\nu/\mathfrak m_\nu+{\rm ratrk}(\nu).
$$
Here the rational rank ${\rm ratrk}(\nu)$ of $\nu$ is the $\Q$-dimension of the tensor product of the value group $\Gamma_{\nu}$ of $\nu$ with $\Q$. We say that $\nu$ is an Abhyankar valuation if equality holds in this equation. 

\begin{Teo}\label{TeoAbh} Let $K$ be an algebraic function field over a field $k$, and let $\nu$ be an Abhyankar valuation on $K$.  
Assume that $L$ is a finite  extension of $K$ and that $\omega$ is an extension of $\nu$ to $L$. Suppose that $\VR_\omega/\mathfrak m_\omega$ is separable over $k$. If $d(\omega|\nu)=1$ and $\epsilon(\omega|\nu)=e(\omega|\nu)$, then $\VR_\omega$ is essentially finitely generated over $\VR_\nu$.
\end{Teo}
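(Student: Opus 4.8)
The plan is to reduce to the structure theory of Abhyankar valuations. Since $\nu$ is Abhyankar on the function field $K|k$, equality holds in Abhyankar's inequality, and a well-known theorem (going back to Abhyankar, with the refinements of Knaf--Kuhlmann) says that $\Gamma_\nu$ is a finitely generated abelian group and $\VR_\nu/\mathfrak m_\nu$ is a finitely generated field extension of $k$; moreover $\omega$ is then also Abhyankar on $L|k$, with $\Gamma_\omega$ again finitely generated and $F\omega$ finitely generated over $k$. First I would record these facts and fix generators. Crucially, because $\Gamma_\nu$ is finitely generated (hence a free abelian group of rank $r={\rm ratrk}(\nu)$), one can choose elements $y_1,\dots,y_r\in K$ whose values form a basis of $\Gamma_\nu$; after passing to the subring they generate and localizing at $\nu$, we are reduced to understanding the extension at the level of the associated graded rings and residue fields.

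The second step is to exploit the hypotheses $d(\omega|\nu)=1$ and $\epsilon(\omega|\nu)=e(\omega|\nu)$. By the already-proved equivalences in the diagram, $\epsilon(\omega|\nu)=e(\omega|\nu)$ (statement \ding{187}) is equivalent to statement \ding{184}, i.e. $(\Gamma_\nu)_{\ge 0}$ has finite index in $(\Gamma_\omega)_{\ge 0}$, and equivalent to statement \ding{183}, i.e. $\textrm{gr}_\omega(\VR_\omega)$ is a finitely generated $\textrm{gr}_\nu(\VR_\nu)$-algebra. So $\textrm{gr}_\omega(\VR_\omega)$ is a finitely generated module over $\textrm{gr}_\nu(\VR_\nu)$. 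Together with the defectlessness $d(\omega|\nu)=1$ and the separability of $F\omega$ over $k$ (hence, combined with finite generation, over $K\nu$), the extension $\omega|\nu$ is ``as tame as possible'': it is a composite of an unramified part and a part whose ramification is captured entirely by the value-group extension, with no defect. I would use this to produce explicit generators: lift a finite residue-field generating set and a finite set of elements of $L$ realizing coset representatives of $\Gamma_\omega/\Gamma_\nu$, call them $x_1,\dots,x_s$, and argue that $\VR_\omega = S^{-1}\VR_\nu[x_1,\dots,x_s]$ for a suitable multiplicative set $S\subseteq \VR_\nu[x_1,\dots,x_s]$.

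The verification that this candidate ring equals $\VR_\omega$ is where the work concentrates. One inclusion is clear. For the reverse, given $z\in\VR_\omega$, I would write down its expansion in terms of the chosen generators using the fact that $\textrm{gr}_\omega(\VR_\omega)$ is generated over $\textrm{gr}_\nu(\VR_\nu)$ by the initial forms of the $x_i$ together with residue-field generators; a successive-approximation / Artin--Rees-type argument at the level of the graded ring lets one subtract off leading terms, and the defectless hypothesis guarantees the process terminates in the appropriate localized sense (this is essentially the mechanism behind the known case $e(\omega|\nu)=1$ in \cite{Hens} and behind Corollary \ref{Endler1}). The main obstacle I anticipate is precisely making this termination rigorous when $L|K$ is not normal: one cannot simply pass to a Galois closure without possibly destroying the Abhyankar hypotheses on the intermediate fields, so the argument must be carried out directly. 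A natural way around this is to first prove the normal case (where Corollary \ref{Endler1} already applies, or adapt its proof) for the Galois closure $L'|K$ and the extension $\omega'$ of $\omega$, verify that $\nu$, $\omega$, $\omega'$ are all Abhyankar with separable residue extensions over $k$, and then descend: the valuation ring $\VR_\omega$ is $\VR_{\omega'}\cap L$, and essential finite generation should pass to this intersection because $L$ is a finitely generated $K$-module and all the rings in sight are finitely generated over $\VR_\nu$ up to localization. Checking the descent step carefully — in particular that the localizing multiplicative set can be chosen inside $\VR_\nu[x_1,\dots,x_s]$ rather than merely inside the larger ring — is the technical heart of the proof.
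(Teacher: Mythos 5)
There is a genuine gap, and it is located exactly where you flag the ``work concentrates'': the successive-approximation argument at the level of $\textrm{gr}_\omega(\VR_\omega)$ has no termination mechanism. Finite generation of $\textrm{gr}_\omega(\VR_\omega)$ over $\textrm{gr}_\nu(\VR_\nu)$ is statement \ding{183}, which by the paper's diagram is equivalent to $\epsilon(\omega|\nu)=e(\omega|\nu)$ alone and is \emph{strictly weaker} than \ding{182}. You propose to add $d(\omega|\nu)=1$ and ``let the defectless hypothesis guarantee termination,'' but that combination is exactly hypothesis \ding{188}, and the implication \ding{188} $\Rightarrow$ \ding{182} is precisely Conjecture \ref{MainConj}, which the paper regards as open and proves only in special cases. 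An argument that works by subtracting leading terms in the graded ring would, if it terminated, prove the full conjecture with no Abhyankar hypothesis at all, so something must be missing: in a non-Noetherian valuation ring there is no Artin--Rees or $\mathfrak m$-adic convergence to appeal to, and defectlessness by itself does not furnish one. The Abhyankar hypothesis must enter in an essential, structural way, and your sketch never actually uses it beyond recording that $\Gamma_\nu$ is finitely generated.

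Your fallback route through the Galois closure $L'$ also does not go through. The hypotheses $d(\omega|\nu)=1$ and $\epsilon(\omega|\nu)=e(\omega|\nu)$ need not lift to an extension $\omega'$ on $L'$: the extension $\omega'|\nu$ can acquire defect or fail $\epsilon=e$ even when $\omega|\nu$ satisfies both (compare the diagram, where \ding{188} $\not\Rightarrow$ \ding{189}; the conditions are genuinely about the particular extension, not about the normal closure). And even granting the result for $\omega'|\nu$, the descent $\VR_\omega=\VR_{\omega'}\cap L$ does not automatically inherit essential finite generation over $\VR_\nu$; intersecting an essentially finitely generated overring with a subfield is not an operation that preserves this property, and the point that ``the localizing set can be chosen inside $\VR_\nu[x_1,\dots,x_s]$'' is not a technicality but the whole difficulty.

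The paper's proof goes a different and more geometric route. It invokes the Knaf--Kuhlmann local uniformization theorem for Abhyankar valuations (Theorem \ref{TheoremAbU}) to produce, inside $\VR_\nu$ and $\VR_\omega$, algebraic regular local rings $R$ of $K$ and $S$ of $L$ of dimension $\mathrm{ratrk}(\nu)$, with regular parameters in a normalized monomial relationship $x_1=\gamma y_1^e$, $x_i=y_i$ for $i\ge 2$ (Proposition \ref{PropGoodForm}); here $\epsilon=e$ enters through Proposition \ref{PropGS} to pin down the shape of $\Gamma_\omega/\Gamma_\nu$, and $d=1$ enters through \cite[Theorem 1]{KK2}. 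Then, given $f\in\VR_\omega$, one passes to the completion $\widehat S=\kappa[[y_1,\dots,y_n]]$, extracts monomial ideals from the numerator and denominator, and performs explicit sequences of primitive monoidal transforms along $\nu$ (Propositions \ref{PropZa}, \ref{PropSR}) to arrange the relevant monomial divisibilities; Abhyankar's lemma $\widehat{S_1}\cap L=S_1$ then pulls $f$ back into $S_1=R_1[z_1,\dots,z_m]_\omega\subset\VR_\nu[z_1,\dots,z_m]_\omega$. It is the Noetherianity and regularity of the rings $R_i,S_i$ produced by local uniformization --- available only because $\nu$ is Abhyankar with residue field separable over $k$ --- that supplies the finiteness the graded-ring argument cannot.
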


We prove  Theorem \ref{TeoAbh} in Section \ref{SecAbh}.

\noindent
\bf Acknowledgement. \rm We are grateful to Hagen Knaf for suggesting the main problem of this paper and for interesting discussions on the topic. Also, he contributed substantially to this paper by providing the proof for Theorem \ref{Knaf} presented here.

\section{Preliminaries}\label{proofofdieamg}

We will start by stating some known results related to this paper. For a subring $R$ of $\VR_\nu$ (and $S$ of $\VR_\omega$) we will denote $R_\nu=R_{\MI_\nu\cap R}$ ($S_\omega=S_{\MI_\omega\cap S}$).

\begin{Teo}(\cite[Theorem 18.6]{End})\label{Endler}
Assume that we are in situation (\ref{sit}) and let $\omega=\omega_1,\ldots,\omega_r$ be all the extensions of $\nu$ to $L$. Let $D$ be the integral closure of $\VR_\nu$ in $L$. Then $D$ is a finite free module over $\VR_\nu$ if and only if $(L|K,\nu)$ is defectless and $\epsilon(\omega_i|\nu)=e(\omega_i|\nu)$ for every $i=1,\ldots,r$.
\end{Teo}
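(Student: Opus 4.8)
The plan is to reduce to the henselian case and then exhibit an explicit basis. As a preliminary observation, $D$ is torsion-free over the valuation ring $\VR_\nu$, and a finitely generated torsion-free module over a valuation ring is free; since the natural map $D\otimes_{\VR_\nu}K\to L$ is an isomorphism, a finitely generated $D$ is free of rank $[L:K]$. So the assertion amounts to: $D$ is a finitely generated $\VR_\nu$-module if and only if $d(\omega_i|\nu)=1$ and $\epsilon(\omega_i|\nu)=e(\omega_i|\nu)$ for every $i$. To handle all the extensions $\omega_1,\dots,\omega_r$ at once I would pass to the henselization. Since $\VR_\nu\to\VR_\nu^h$ is faithfully flat and integral closure commutes with this base change, $D\otimes_{\VR_\nu}\VR_\nu^h$ is the integral closure of $\VR_\nu^h$ in $L\otimes_KK^h\cong\prod_{i=1}^rL_{\omega_i}^h$, namely $\prod_{i=1}^r\VR_{\omega_i}^h$; hence $D$ is finitely generated over $\VR_\nu$ if and only if each $\VR_{\omega_i}^h$ is finitely generated over $\VR_\nu^h$. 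As $e$, $f$, $\epsilon$ and $d$ are unchanged by henselization, it suffices to prove: if $(K,\nu)$ is henselian and $\omega$ is the unique extension of $\nu$ to $L$, then $\VR_\omega$ is a finitely generated $\VR_\nu$-module if and only if $d(\omega|\nu)=1$ and $\epsilon(\omega|\nu)=e(\omega|\nu)$. Write $e,f,d$ for the three invariants; in the henselian case $[L:K]=efd=:n$.

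For the ``if'' direction assume $d=1$ and $\epsilon:=\epsilon(\omega|\nu)=e$. Pick $u_1,\dots,u_f\in\VR_\omega$ whose residues form a $K\nu$-basis of $F\omega$, and, using $\epsilon=e$, pick $t_1,\dots,t_e\in L^\times$ whose values form a system of coset representatives of $\Gamma_\omega/\Gamma_\nu$ lying in $\{\gamma\in(\Gamma_\omega)_{\ge 0}:\gamma<(\Gamma_\nu)_{>0}\}$. I claim $\{t_ju_k\}$ is a free $\VR_\nu$-basis of $\VR_\omega$. Linear independence over $K$, and the fact that the $ef=n$ elements span $L$, follow from the usual valuation/residue argument: in $\sum_{j,k}c_{jk}t_ju_k$ with $c_{jk}\in K$, the inner sums $\sum_kc_{jk}u_kt_j$ have $\omega$-values in pairwise distinct cosets of $\Gamma_\nu$, so there is no cancellation between different $j$, while $K\nu$-independence of the residues of the $u_k$ gives $\omega(\sum_kc_{jk}u_k)=\min_k\nu(c_{jk})$. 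The same computation shows $\omega(\sum_{j,k}c_{jk}t_ju_k)=\min_j(\omega(t_j)+\min_k\nu(c_{jk}))$, so if this element lies in $\VR_\omega$ then $\nu(c_{jk})\ge-\omega(t_j)$ for all $j,k$; since $0\le\omega(t_j)<(\Gamma_\nu)_{>0}$, this forces $\nu(c_{jk})\ge 0$, for otherwise $-\nu(c_{jk})$ would be a positive element of $\Gamma_\nu$ bounded above by $\omega(t_j)$. Hence the $t_ju_k$ generate $\VR_\omega$ over $\VR_\nu$, and the claim follows.

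For the ``only if'' direction assume $\VR_\omega$ is finitely generated, hence free of rank $n$, over $\VR_\nu$, so that $\dim_{K\nu}\VR_\omega/\MI_\nu\VR_\omega=n$. The key estimate is $\dim_{K\nu}\VR_\omega/\MI_\nu\VR_\omega\le\epsilon f$: one has $\MI_\nu\VR_\omega=\{x\in\VR_\omega:\omega(x)\ge\eta\text{ for some }\eta\in(\Gamma_\nu)_{>0}\}$, so filtering $\VR_\omega/\MI_\nu\VR_\omega$ by $\omega$-value only the $\epsilon$ values $\gamma$ with $0\le\gamma<(\Gamma_\nu)_{>0}$ contribute (every $\omega$-value strictly between two consecutive such $\gamma$ already lies in $\MI_\nu\VR_\omega$), and each such $\gamma$ contributes a $K\nu$-subquotient of the $f$-dimensional space $\{x:\omega(x)\ge\gamma\}/\{x:\omega(x)>\gamma\}\cong F\omega$. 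Combining this with $\epsilon\le e$ (which always holds) and $d\ge1$ gives $n\le\epsilon f\le ef\le efd=n$, forcing $\epsilon=e$ and $d=1$.

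I expect the main obstacle to be the bookkeeping of the henselization reduction — verifying that $D\otimes_{\VR_\nu}\VR_\nu^h$ is exactly the integral closure of $\VR_\nu^h$ in $\prod_iL_{\omega_i}^h$, that this integral closure equals $\prod_i\VR_{\omega_i}^h$, and that all four invariants are genuinely preserved — together with making the subquotient count in the ``only if'' direction precise; the remaining steps are routine manipulations with values and residues.
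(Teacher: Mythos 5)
The paper cites this result from Endler (\emph{Valuation Theory}, Theorem 18.6) and does not reproduce its proof, so there is no internal argument to compare against; I therefore evaluate the proposal on its own. Your proof is correct and assembles the statement along a natural route: henselize to split the integral closure, then in the henselian case exhibit the explicit free basis $\{t_ju_k\}$ for the ``if'' direction and count dimensions modulo $\MI_\nu\VR_\omega$ for the ``only if'' direction. Your ``only if'' bound $\dim_{K\nu}\VR_\omega/\MI_\nu\VR_\omega\le\epsilon f$ is the inequality half of Lemma~\ref{Lemafinringepsl} (Endler 18.5), which the paper invokes in Lemma~\ref{Lemma23} for the same purpose; you only need the inequality, which your filtration sketch yields, combined with $\epsilon\le e$ (Proposition~\ref{Propoequ28}) and the implicit finiteness $\epsilon\le e\le[L:K]<\infty$ that lets you pick the $t_j$. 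The technical debt in the reduction is exactly where you flag it: $L\otimes_KK^h\cong\prod_iL^h_{\omega_i}$ uses that $K^h/K$ is separable algebraic (so the tensor product is reduced, hence a product of fields); identifying $D\otimes_{\VR_\nu}\VR_\nu^h$ with the integral closure of $\VR_\nu^h$ in that product uses that normalization commutes with the ind-\'etale base change $\VR_\nu\to\VR_\nu^h$ (\'etale base change at each finite stage, then a filtered colimit); finite generation of $D$ over $\VR_\nu$ is then recovered by faithfully flat descent; and in a henselian valued field the integral closure of the valuation ring in a finite extension is itself the unique extending valuation ring. All of these are standard and true, so the proof goes through. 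One very small wording issue: in the filtration step the parenthetical about ``$\omega$-values strictly between two consecutive $\gamma$'' is vacuous, since by construction $\gamma_0,\dots,\gamma_{\epsilon-1}$ exhaust all nonnegative values of $\Gamma_\omega$ below $(\Gamma_\nu)_{>0}$ and every larger value already sends the element into $\MI_\nu\VR_\omega$.
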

\begin{Cor}\label{Endler1}
Assume that we are in one of the following case:\\
\textbf{(i)} $\omega$ is the only extension of $\nu$ to $L$;\\
\textbf{(ii)} $L|K$ is a normal extension.\\
Then $D$ is a finite $\VR_\nu$-module if and only if $d(\omega|\nu)=1$ and $\epsilon(\omega|\nu)=e(\omega|\nu)$.
\end{Cor}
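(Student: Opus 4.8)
The plan is to deduce both cases from Theorem \ref{Endler}, whose statement is the equivalence ``$D$ is finite free over $\VR_\nu$ $\Longleftrightarrow$ $(L|K,\nu)$ is defectless and $\epsilon(\omega_i|\nu)=e(\omega_i|\nu)$ for all $i$.'' Since $\VR_\nu$ is a valuation ring, it is in particular a local domain, hence any finite module over it is free if and only if it is flat; and a finitely generated torsion-free module over a valuation ring is automatically flat (valuation rings are Pr\"ufer domains, so torsion-free = flat for finitely presented modules, and finite modules over a domain with the given structure are torsion-free). The ring $D$ is torsion-free over $\VR_\nu$ because it is a domain containing it. Therefore ``$D$ finite over $\VR_\nu$'' and ``$D$ finite free over $\VR_\nu$'' are equivalent, and Theorem \ref{Endler} already gives the stated criterion \emph{provided} we can reduce the condition ``$(L|K,\nu)$ is defectless and $\epsilon(\omega_i|\nu)=e(\omega_i|\nu)$ for all $i$'' to the single-valuation condition ``$d(\omega|\nu)=1$ and $\epsilon(\omega|\nu)=e(\omega|\nu)$'' in each of the two cases.

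For case \textbf{(i)}, there is nothing to reduce: if $\omega$ is the only extension of $\nu$ to $L$, then $\omega_1=\omega$ and the family of conditions indexed by $i$ collapses to the single condition, while ``$(L|K,\nu)$ defectless'' means precisely $d(\omega|\nu)=1$. So case (i) is immediate from Theorem \ref{Endler} together with the free$\Leftrightarrow$finite remark above.

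For case \textbf{(ii)}, suppose $L|K$ is normal and let $\omega_1,\dots,\omega_r$ be all the extensions of $\nu$ to $L$. The point is that the automorphism group $G=\mathrm{Aut}(L|K)$ acts transitively on $\{\omega_1,\dots,\omega_r\}$: for any two extensions $\omega_i,\omega_j$ there is $\sigma\in G$ with $\omega_j=\omega_i\circ\sigma$ (this is the usual conjugacy theorem for extensions of a valuation to a normal extension; it holds in the non-Galois normal case because the purely inseparable part contributes a unique extension). Each such $\sigma$ is an isomorphism of valued fields $(L,\omega_i)\to(L,\omega_j)$ fixing $(K,\nu)$, hence it identifies $\Gamma_{\omega_i}$ with $\Gamma_{\omega_j}$ as extensions of $\Gamma_\nu$ and identifies the henselizations, so it preserves all the numerical invariants: $e(\omega_i|\nu)=e(\omega_j|\nu)$, $\epsilon(\omega_i|\nu)=\epsilon(\omega_j|\nu)$, $f(\omega_i|\nu)=f(\omega_j|\nu)$ and $d(\omega_i|\nu)=d(\omega_j|\nu)$. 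Consequently the conjunction ``$\epsilon(\omega_i|\nu)=e(\omega_i|\nu)$ for all $i$'' is equivalent to ``$\epsilon(\omega|\nu)=e(\omega|\nu)$'' (for the single distinguished $\omega=\omega_1$), and likewise ``$(L|K,\nu)$ defectless'', i.e. $d(\omega_i|\nu)=1$ for all $i$, is equivalent to $d(\omega|\nu)=1$. Feeding this back into Theorem \ref{Endler} gives exactly the stated criterion.

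I expect the only genuinely delicate point to be the transitivity of the action of $\mathrm{Aut}(L|K)$ on the extensions of $\nu$ in the \emph{normal but possibly inseparable} setting, and the fact that conjugation by a $K$-automorphism preserves the defect (which follows because henselization is functorial and $[L^h:K^h]$, $e$, $f$ are each conjugation-invariant). Both are standard facts from valuation theory (cf.\ \cite{End}), so the proof is essentially a matter of assembling Theorem \ref{Endler} with these observations and the elementary module-theoretic remark that finite torsion-free $=$ finite free over a valuation ring.
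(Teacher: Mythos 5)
Your argument is correct and follows essentially the same route as the paper: both cases are reduced to Theorem \ref{Endler}, case \textbf{(i)} trivially, and case \textbf{(ii)} by the conjugacy of all extensions of $\nu$ under $\mathrm{Aut}(L|K)$, which identifies the invariants $e$, $\epsilon$, $f$, $d$ across the $\omega_i$. You also usefully spell out why ``finite'' and ``finite free'' coincide for $D$ over the valuation ring $\VR_\nu$ (torsion-free finite modules over a valuation ring are free), a point the paper passes over silently.
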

\begin{proof}
Item \textbf{(i)} is trivial and item \textbf{(ii)} follows from the fact that if $L|K$ is normal, any every extension $\omega_i$ of $\nu$ to $L$ is conjugate to $\omega$ and hence $d(\omega_i|\nu)=d(\omega|\nu)$, $\epsilon(\omega_i|\nu)=\epsilon(\omega|\nu)$ and $e(\omega_i|\nu)=e(\omega|\nu)$.
\end{proof}
\begin{Teo}(\cite[Theorem 1.3]{Hens})\label{teoessnovkuhl}
Assume that we are in situation (\ref{sit}). If $e(\omega|\nu)=d(\omega|\nu)=1$ and $L\omega|K\nu$ is separable, then
\[
\VR_\omega=\VR_\nu[\eta]_\omega\mbox{ for some }\eta\in\VR_\omega.
\]
In particular, $\VR_\omega$ is essentially finitely generated over $\VR_\nu$.
\end{Teo}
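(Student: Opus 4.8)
The plan is to reduce to the case $L=K(\eta)$ for a suitably chosen generator $\eta$ and then to recognize $\VR_\omega$ as a localization of the \emph{étale} $\VR_\nu$-algebra $\VR_\nu[\eta]$. First I would check that the hypotheses force $L|K$ to be separable (hence simple): if $K_s$ is the separable closure of $K$ in $L$ and $L|K_s$ were purely inseparable of degree $p^m>1$, then $\omega$ is the unique extension of $\mu:=\omega|_{K_s}$ and $L^h=LK_s^h$, so $e(\omega|\mu)f(\omega|\mu)d(\omega|\mu)=[L^h:K_s^h]=[L:K_s]=p^m$; multiplicativity of $e,f,d$ in towers together with $e(\omega|\nu)=d(\omega|\nu)=1$ gives $e(\omega|\mu)=d(\omega|\mu)=1$, so $[L\omega:K_s\mu]=p^m>1$, and this extension is both purely inseparable (it is the residue extension of the purely inseparable $L|K_s$) and separable (it sits inside the separable $L\omega|K\nu$) — impossible. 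So $L|K$ is separable.

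Next I would choose $\eta$. Let $D$ be the integral closure of $\VR_\nu$ in $L$ and let $\PI_1=\MI_\omega\cap D,\PI_2,\dots,\PI_r$ be its maximal ideals, corresponding to the extensions $\omega=\omega_1,\dots,\omega_r$ of $\nu$, so that $\VR_\omega=D_{\PI_1}$. Fix a primitive element $\bar\eta_0$ of the separable extension $L\omega|K\nu$, with (separable, irreducible) minimal polynomial $\bar g\in K\nu[X]$ over $K\nu$, of degree $f:=f(\omega|\nu)$. By the Chinese Remainder Theorem I would pick $\eta\in D$ whose residue at $\PI_1$ is $\bar\eta_0$ and which lies in $\PI_j$ for $j\ge 2$; since the ideal $\mathfrak c=\PI_1\cap\cdots\cap\PI_r$ is nonzero it spans $L$ over $K$ and is Zariski dense in $L$, while the non-generators of the separable extension $L|K$ form a proper Zariski closed set, so I may moreover arrange $K(\eta)=L$ (the case of trivial $\nu$ being immediate).

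I would then analyse the minimal polynomial $h\in\VR_\nu[X]$ of $\eta$ over $K$; its coefficients lie in $\VR_\nu$ because $\eta\in D$ and $\VR_\nu$ is integrally closed, so $\VR_\nu[\eta]=\VR_\nu[X]/(h)$ is $\VR_\nu$-free of rank $n:=[L:K]$. As $L|K$ is separable, $L\otimes_K K^h=\prod_{j=1}^r L^h_{(j)}$ with $L^h_{(j)}$ the henselization of $(L,\omega_j)$, and accordingly $h=\prod_{j=1}^r h_j$ over $K^h$, with $h_j$ the minimal polynomial over $K^h$ of the image $\eta_j$ of $\eta$ in $L^h_{(j)}$. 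For $j=1$: $\deg h_1=[L^h_{(1)}:K^h]=e(\omega|\nu)f(\omega|\nu)d(\omega|\nu)=f$ — this is where $e(\omega|\nu)=d(\omega|\nu)=1$ is used — and since $\eta_1$ reduces to the degree-$f$ generator $\bar\eta_0$ of $L\omega|K\nu$, necessarily $\bar h_1=\bar g$. For $j\ge 2$: $\omega_j(\eta_j)>0$, and over the henselian $K^h$ all conjugates of $\eta_j$ have the same positive value, so $\bar h_j=X^{\deg h_j}$. Hence $\bar h=\bar g\cdot X^{\,n-f}$, and because $\bar g$ is separable with $\bar g(0)\ne 0$, the root $\bar\eta_0$ of $\bar h$ is \emph{simple}: $\bar h'(\bar\eta_0)=\bar g'(\bar\eta_0)\,\bar\eta_0^{\,n-f}\ne 0$.

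Finally, with $A=\VR_\nu[\eta]$ and $\QI=\MI_\omega\cap A$, one has $A/\QI=K\nu[\bar\eta_0]=L\omega$, so $h'(\eta)$ is a unit of $A/\QI$; thus $h'(\eta)\notin\QI$ and $\Omega_{A_\QI/\VR_\nu}=A_\QI/(h'(\eta))=0$, making $A_\QI$ a localization of the étale $\VR_\nu$-algebra $A[1/h'(\eta)]$. As $\VR_\nu$ is a normal domain, $A[1/h'(\eta)]$ — a domain, being a subring of $L$ — is normal, hence so is the local ring $A_\QI$, whose fraction field is $K(\eta)=L$. Since $\eta\in\PI_j$ but $\eta\notin\QI$ for $j\ge 2$, $\PI_1$ is the only maximal ideal of $D$ lying over $\QI$, so $D\otimes_A A_\QI$ is local and equals $D_{\PI_1}=\VR_\omega$; it is integral over $A_\QI\subseteq\VR_\omega$ with the same fraction field $L$, so normality of $A_\QI$ gives $\VR_\omega\subseteq A_\QI$ and hence $\VR_\omega=\VR_\nu[\eta]_\omega$. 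The hard part is the third step: producing a single element of $D$ that simultaneously generates $L|K$, reduces to a generator of $L\omega|K\nu$, and vanishes at the remaining $\PI_j$, and then verifying that its $K$-minimal polynomial has $\bar\eta_0$ as a \emph{simple} residual root — this simplicity is exactly what the defectlessness $d(\omega|\nu)=1$ (with $e(\omega|\nu)=1$) buys, via the equality of values of conjugates over the henselization.
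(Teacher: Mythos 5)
The paper itself gives no proof of this statement: it is imported verbatim from \cite[Theorem 1.3]{Hens}, so the only meaningful comparison is with that source. Your argument is in the same spirit as the ``henselian element'' construction there: you build $\eta\in D$ which generates $L|K$, whose residue is a primitive element $\bar\eta_0$ of $L\omega|K\nu$ at $\PI_1=\MI_\omega\cap D$, and which lies in the other maximal ideals of $D$, and you use $e(\omega|\nu)=d(\omega|\nu)=1$ (through $[L^h:K^h]=efd$ and the splitting $h=\prod_j h_j$ over $K^h$) to force $\bar h=\bar g\cdot X^{\,n-f}$ with $\bar\eta_0$ a simple root; the identity $\VR_\omega=\VR_\nu[\eta]_\omega$ is then extracted, in your case through the standard \'etale algebra $\VR_\nu[\eta][1/h'(\eta)]$. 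The preliminary reduction to $L|K$ separable, the CRT-plus-avoidance choice of $\eta$ (with the trivial-valuation case taking care of finite $K$), and the computation of the reductions $\bar h_1=\bar g$, $\bar h_j=X^{\deg h_j}$ for $j\ge 2$ are all correct.

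Two points need small repairs. First, you must choose $\bar\eta_0\neq 0$ (automatic when $f>1$; take $\bar\eta_0=1$ when $f=1$): you use $\bar g(0)\neq 0$ to conclude $\bar h'(\bar\eta_0)=\bar g'(\bar\eta_0)\bar\eta_0^{\,n-f}\neq 0$, and again to conclude $\eta\notin\QI$, and neither holds for $\bar\eta_0=0$ when $n>f$; as written this hypothesis is asserted but never arranged. Second, ``\'etale over normal is normal'' is invoked over the non-noetherian ring $\VR_\nu$; this is true, but it deserves justification, and it is cleaner (and shortens your final step) to use the classical conductor lemma: for $L=K(\eta)$ separable with $\eta$ integral over the integrally closed $\VR_\nu$ and $h$ its minimal polynomial, one has $h'(\eta)D\subseteq \VR_\nu[\eta]$. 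This gives $D\subseteq A_\QI$ directly, and every $s\in D\setminus \PI_1$ is a unit of $A_\QI$ because $s\notin\MI_\omega\cap A_\QI=\QI A_\QI$; hence $\VR_\omega=D_{\PI_1}\subseteq A_\QI\subseteq\VR_\omega$, with no discussion of normality of $A_\QI$ needed. With these adjustments your proof is complete.
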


We will present now the examples that show that the ``up arrows" in our diagram are satisfied.

\begin{Exa}(\ding{187} $\nRightarrow$ \ding{188})
For this, it is enough to present an immediate extension with non trivial defect. Example 3.1 of \cite{Kuhldefect} is a simple example of that situation.
\end{Exa}

\begin{Exa}(\ding{182} $\nRightarrow$ \ding{185} and \ding{188} $\nRightarrow$ \ding{189})
Let $(K,\nu)$ be a valued field with group of values $\frac{1}{3^{\infty}}\Z$. Let $L$ be the extension
\[
K(\eta)=K[X]/(X(X-1)^2-a)
\]
where $a\in K$ is such that $\nu(a)=1$. Then $\nu$ admits two extensions $\omega_1$ and $\omega_2$ to $L$ defined by
\[
\omega_1(\eta)=1\mbox{ and }\omega_2(\eta-1)=\frac 12.
\]
Since
\[
e(\omega_1|\nu)\ge1\mbox{ and }e(\omega_2|\nu)\ge2,
\]
we have by the fundamental inequality, that these are the only extensions of $\nu$ to $L$. Moreover, we have that
\[
e(\omega_1|\nu)=1, e(\omega_2|\nu)=2\mbox{ and }d(\omega_i|\nu)=f(\omega_i|\nu)=1\mbox{ for }i=1,2.
\]
Since the value group of $\nu$ is not discrete, we have that $\epsilon(\omega_i|\nu)=1$ for $i=1,2$. Hence,  the extension $\omega_1$ satisfies \ding{188} but not \ding{189}, since $\epsilon(\omega_2|\nu)<e(\omega_2|\nu)$.

Moreover, since $\epsilon(\omega_2|\nu)<e(\omega_2|\nu)$, by Theorem \ref{Endler} we conclude that \ding{185} is not satisfied. By Theorem \ref{teoessnovkuhl}, we have that \ding{182} is satisfied. In fact, one can show that
\[
\VR_{\omega_1}=\VR_\nu[\eta]_{\omega_1}.
\]

%Take any element $u\in \VR_\omega$. Then $u=b+c\eta+d\eta^2$ for some $b,c,d\in K$ and $\omega_1(u)\geq 0$. Since $\eta(\eta-1)^2-a=0$ we have
%\[
%u=b+c\eta+d\eta^2=\frac{b\eta^2+(ca-2b)\eta+(da^2-ca+b)}{(\eta-1)^2}.
%\]
%Since $\omega_1(\eta-1)=0$, it is enough to show that $b, ca-2b, da^2-ca+b\in \VR_\nu$.
\end{Exa}
\begin{Prop}[\ding{185} $\Lra$ \ding{186}]\label{modvsalg}
Let $A$ be a normal domain (integrally closed in $K=\QF(A)$) and $L$ an algebraic extension extension of $K$. Let $I_L(A)$ be the integral closure of $A$ in $L$. If $I_L(A)$ is a finitely generated $A$-algebra, then it is a finite $A$-module.
\end{Prop}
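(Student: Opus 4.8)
The plan is to exploit the normality of $A$ together with finite generation as an algebra to force the generators to be integral over $A$ in the strong sense — i.e., to show $I_L(A)$ is already module-finite. Write $B = I_L(A)$ and suppose $B = A[y_1,\dots,y_n]$ with each $y_i \in L$ integral over $A$. The subtlety is that finite type plus integral does not in general imply module-finite over an arbitrary base; the hypothesis that $A$ is integrally closed in $K$ (and that $L/K$ is algebraic, so $B$ sits inside $L$) is exactly what rescues the argument.

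First I would reduce to a finitely generated subextension: each $y_i$ lies in some finite subextension $K \subseteq L_i \subseteq L$, so all the $y_i$ lie in a common finite extension $L_0 = K(y_1,\dots,y_n) \subseteq L$. Then $B = A[y_1,\dots,y_n] \subseteq I_{L_0}(A) \subseteq I_L(A) = B$, which forces $L_0 = L$ (up to the part of $L$ that matters) and in particular shows $I_L(A) = I_{L_0}(A)$ with $[L_0:K] < \infty$. So without loss of generality $L/K$ is a finite extension.

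Next, the key step: for a finite extension $L/K$ with $A$ normal, the integral closure $I_L(A)$ is a submodule of a finitely generated $A$-module. The standard argument is to pass to the normal (or Galois, after also adjoining a normal closure) closure and use the trace form: choose a $K$-basis $e_1,\dots,e_m$ of $L$ consisting of elements integral over $A$; the trace pairing $(x,y)\mapsto \mathrm{Tr}_{L/K}(xy)$ is nondegenerate when $L/K$ is separable, and one shows $I_L(A) \subseteq \sum_j A e_j^\ast$ where $\{e_j^\ast\}$ is the dual basis, a finitely generated $A$-module; since $A$ is normal (so $A$ is itself integrally closed, but not necessarily Noetherian) this last step does not need a Noetherian hypothesis — it only uses that $\mathrm{Tr}_{L/K}(B) \subseteq A$, which holds because traces of integral elements are integral over $A$ and lie in $K$, hence in $A$ by normality. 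In the inseparable case one first replaces $L$ by its separable closure over the relevant purely inseparable layer, or invokes the Noether-type argument over a purely inseparable extension; here the cleaner route, given that we only need the result as stated, is to note that $B$ is a finite-type $A$-algebra that is integral over $A$ and contained in a field, and combine this with the submodule-of-finite-module bound. The main obstacle is precisely handling the possibly non-separable and non-Noetherian situation simultaneously; I expect one must either (a) restrict to the separable part and treat the purely inseparable part by a direct generators-and-relations argument using that $y_i^{p^k} \in A$ for suitable $k$, or (b) argue that since $B$ is module-finite over $A[y_1^{p^k},\dots,y_n^{p^k}]$ it suffices to bound that subring.

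Finally, I would assemble the pieces: $B$ is generated as an $A$-algebra by finitely many elements, each integral over $A$, hence $B$ is integral over $A$; and $B$ is contained in a finitely generated $A$-module (by the trace/dual-basis bound in the finite extension case established above). A ring that is simultaneously finitely generated as an algebra and contained in a finitely generated module over the base need not a priori be module-finite, but here we can conclude directly: $B = A[y_1,\dots,y_n]$ is generated as an $A$-module by the finitely many monomials $y_1^{a_1}\cdots y_n^{a_n}$ with $0 \le a_i < d_i$, where $d_i$ is the degree of an integral equation for $y_i$ over $A$ — this is the usual argument that a finite-type integral algebra is module-finite, which does hold over any commutative base. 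So in fact the normality and the submodule bound are used only to guarantee that $I_L(A) = A[y_1,\dots,y_n]$ genuinely consists of elements integral over $A$ with such equations, i.e., to rule out pathologies; once the $y_i$ are integral, module-finiteness is immediate. I would present the proof in this order: (1) reduce to finite $L/K$; (2) each algebra generator $y_i$ is integral over $A$, with a monic equation of some degree $d_i$; (3) conclude $I_L(A) = \sum_{0 \le a_i < d_i} A\, y_1^{a_1}\cdots y_n^{a_n}$ is a finite $A$-module. The one genuine subtlety — and the step I'd flag as the crux — is (2): justifying that the generators can be taken integral, which is where $I_L(A)$ being the integral closure (so every element, including each $y_i$, is integral over $A$) is used directly, making the argument essentially formal.
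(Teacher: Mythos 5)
Your final argument (steps (2)--(3) of your concluding summary) is exactly the paper's proof: each generator $y_i$ lies in $I_L(A)$ and is therefore integral over $A$, and a finitely generated $A$-algebra whose generators satisfy monic equations over $A$ of degree $d_i$ is spanned as an $A$-module by the monomials $y_1^{a_1}\cdots y_n^{a_n}$ with $0\le a_i<d_i$. Be aware, though, that your opening claim that ``finite type plus integral does not in general imply module-finite over an arbitrary base'' is false --- that implication holds over any commutative ring, as you yourself note near the end --- so the excursion through trace pairings, dual bases, separability, and Noetherian concerns is a red herring, and in fact neither the reduction to finite $L/K$ nor the normality of $A$ is used at all.
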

\begin{proof}
Take any $x\in I_L(A)$. Since $x$ is integral over $A$, there exists a relation
\[
x^{n}+a_{n-1}x^{n-1}+\ldots+a_0=0,\mbox{ for }a_0,\ldots,a_{n-1}\in A.
\]
This means that
\[
x^{n}=-a_{n-1}x^{n-1}-\ldots-a_0\in A_x:=Ax^{n-1}+\ldots+Ax+A.
\]
Assume now that for $k\geq n$, we have $x^i\in A_x$ for every $1\leq i\leq k$. Then
\begin{displaymath}
\begin{array}{rcl}
x^{k+1}&=& xx^k=x(b_{n-1}x^{n-1}+b_{n-2}x^{n-2}\ldots+b_0)\\
&=&b_{n-1}x^{n}+b_{n-2}a^{n-1}\ldots+b_0x\\
&=&(b_{n-2}-b_{n-1}a_{n-1})x_i^{n-1}+\ldots+(b_0-b_{n-1}a_1)x-b_{n-1}a_0\in A_x.
\end{array}
\end{displaymath}
Therefore, $A[x]=A_x$.

If $I_L(A)=A[x_1,\ldots,x_r]$, then we choose $n_1,\ldots,n_r$ such that $x_i^{n_i}\in A_{x_i}$. Then we can prove, by induction on $s$, $1\leq s\leq r$, that $A[x_1,\ldots,x_s]$ is generated as an $A$-module by $x_1^{j_1}\ldots x_s^{j_s}$ where $0\leq j_i<n_i$ and $1\leq i\leq s$. Therefore, $I_L(A)$ is a finite $A$-module.
\end{proof}

\section{The initial index}\label{initialindex}
In this section we prove a few basic results about the initial index of a subgroup $\Delta$ of finite index in an ordered group $\Gamma$.

\begin{Lema}\label{Lemma1CS}
Let $\Gamma_0$ be the first convex subgroup of $\Gamma$. If $1<\epsilon(\Gamma\mid\Delta)<\infty$, then $\Gamma_0\simeq\Z$.
\end{Lema}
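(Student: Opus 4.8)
The plan is to exploit the hypothesis $1 < \epsilon(\Gamma\mid\Delta) < \infty$ to pin down the structure of $\Gamma_0$. Write $\epsilon = \epsilon(\Gamma\mid\Delta)$ and set $S = \{\gamma \in \Gamma_{\geq 0} \mid \gamma < \Delta_{>0}\}$, so $|S| = \epsilon$. First I would observe that $S$ is a subset of $\Gamma_0$: indeed, if $\gamma \in \Gamma_{\geq 0}$ satisfies $\gamma < \delta$ for \emph{every} positive $\delta \in \Delta$, then since $\Delta$ has finite index in $\Gamma$ there is a positive integer $n$ with $n\gamma \in \Delta$; were $\gamma > 0$ we would get $0 < n\gamma$, so $n\gamma \in \Delta_{>0}$, and then $\gamma < n\gamma$ forces the contradiction $2\gamma \le n\gamma$ after iterating, or more directly $n\gamma < n\gamma$; hence every element of $S$ is an archimedean-infinitesimal relative to $\Delta$, and a short argument shows it must lie in the first convex subgroup $\Gamma_0$. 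The cleanest route: any $\gamma \in S$ satisfies $0 \le \gamma < \delta$ for all $\delta \in \Delta_{>0}$, and since $\Gamma/\Delta$ is torsion, $\gamma$ is bounded between $0$ and infinitely small multiples of a fixed positive $\delta_0 \in \Delta$, so $\gamma$ generates a group all of whose multiples stay below $\delta_0$; that group is convex-bounded, hence contained in $\Gamma_0$.

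Next I would show $\Gamma_0$ is nontrivial and that it is \emph{archimedean}, i.e. embeds in $\R$. Nontriviality is immediate since $\epsilon > 1$ gives a nonzero element of $S \subseteq \Gamma_0$. Archimedeanity follows because $\Gamma_0$ is by definition the smallest nonzero convex subgroup, so it has no proper nonzero convex subgroup of its own; by Hölder's theorem an ordered abelian group with no proper nonzero convex subgroup embeds order-preservingly in $(\R, +)$. So $\Gamma_0 \hookrightarrow \R$.

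Finally, I would rule out the possibility that $\Gamma_0$ is a non-cyclic subgroup of $\R$, which is where the finiteness of $\epsilon$ does the real work. Let $\delta_0 \in \Delta_{>0}$ be minimal among the positive elements of $\Delta$ lying in $\Gamma_0$ if such a minimum exists; more carefully, note $\Delta \cap \Gamma_0$ has finite index in $\Gamma_0$ (as $(\Gamma : \Delta) < \infty$), so $\Delta \cap \Gamma_0 \hookrightarrow \R$ is a finite-index subgroup of $\Gamma_0 \hookrightarrow \R$ and thus has the same rank. The set $S$ consists exactly of the nonnegative elements of $\Gamma_0$ lying below all of $(\Delta \cap \Gamma_0)_{>0}$ — any positive $\delta \in \Delta$ outside $\Gamma_0$ is already larger than all of $\Gamma_0$ by convexity, so it imposes no constraint. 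If $\Gamma_0$ (viewed in $\R$) were not discrete, then $(\Delta \cap \Gamma_0)_{>0}$ would have infimum $0$ in $\R$, forcing $S = \{0\}$ and $\epsilon = 1$, contradicting $\epsilon > 1$. Hence $\Gamma_0$ is a discrete subgroup of $\R$, i.e. $\Gamma_0 \simeq \Z$.

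The main obstacle is the second paragraph's claim that $S \subseteq \Gamma_0$ and its sharpening that $S$ is governed entirely by $\Delta \cap \Gamma_0$: one must handle carefully the interplay between convexity in $\Gamma$ and the torsion quotient $\Gamma/\Delta$, making sure that "small positive elements of $\Delta$" means small elements of $\Delta \cap \Gamma_0$ and that elements of $\Delta$ outside $\Gamma_0$ are irrelevant because they dominate all of $\Gamma_0$. Once that reduction is in place, the dichotomy "discrete vs. dense" for subgroups of $\R$ finishes the proof cleanly.
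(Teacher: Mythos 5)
Your overall plan — pass to the archimedean group $\Gamma_0\hookrightarrow\R$ via H\"older and invoke the discrete/dense dichotomy for subgroups of $\R$ — is a genuinely different route from the paper's and it does work, but your first step, the claim $S\subseteq\Gamma_0$, is not established by what you wrote. The ``contradiction $2\gamma\le n\gamma$ after iterating, or more directly $n\gamma<n\gamma$'' is not a contradiction: from $n\gamma\in\Delta_{>0}$ and $\gamma<\Delta_{>0}$ one only gets $\gamma<n\gamma$, which is automatic for $n\ge 2$ and tells you nothing. Your ``cleanest route'' is also incomplete: the bound $m\gamma<\delta_0$ for all $m$ is nowhere justified, and even if it held it would not place $\gamma$ in $\Gamma_0$ unless $\delta_0$ itself lies in $\Gamma_0$. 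The fix is short: since $\Gamma_0\ne 0$ and $(\Gamma:\Delta)<\infty$, the image of $\Gamma_0$ in $\Gamma/\Delta$ is finite, hence $\Delta\cap\Gamma_0\ne 0$; pick $\delta_0\in(\Delta\cap\Gamma_0)_{>0}$; then any $\gamma\in S$ satisfies $0\le\gamma<\delta_0\in\Gamma_0$, so $\gamma\in\Gamma_0$ by convexity. With that repaired, your density argument (that $\Delta\cap\Gamma_0$ has finite index in $\Gamma_0$, hence is dense in $\R$ if $\Gamma_0$ is, forcing $S=\{0\}$ and $\epsilon=1$) goes through.

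For comparison, the paper's proof is shorter and uses neither H\"older nor the finite-index assumption: if $\Gamma_0\not\simeq\Z$ then $\Gamma$ has no smallest positive element, and from any one nonzero $\gamma\in S$ (which exists because $\epsilon>1$) one immediately produces infinitely many positive elements below $\gamma$, all of which lie in $S$, contradicting $\epsilon<\infty$. Your approach trades that directness for a more conceptual framing, at the cost of the auxiliary claim $S\subseteq\Gamma_0$ and a genuine reliance on $(\Gamma:\Delta)<\infty$; also note that you derive the contradiction from $\epsilon>1$ while the paper derives it from $\epsilon<\infty$ — both are valid, but it is worth being aware that your proof uses the lower bound where the paper uses the upper one.
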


\begin{proof}
Assume, towards a contradiction, that $\Gamma_0\not\simeq \Z$. Then $\Gamma$ does not admit a smallest positive element. Since $\epsilon>1$, there exists $\gamma\in\Gamma_{>0}$ such that $\gamma<\Delta_{>0}$. Since $\Gamma$ does not admit a smallest positive element, there would exist infinitely many positive elements in $\Gamma$ smaller than $\Delta_{>0}$, and this is a contradiction to $\epsilon<\infty$.
\end{proof}
\begin{Prop}\label{lemmagammakk1}
Assume that $\epsilon:=\epsilon(\Gamma\mid\Delta)>1$ and denote
\[
0=\gamma_0<\gamma_1<\ldots<\gamma_{\epsilon-1}
\]
for all the elements in $\Gamma_{\geq 0}$ which are smaller than $\Delta_{>0}$. Then
\begin{equation}\label{eqannextgamma}
\gamma_k=k\gamma_1\mbox{ for every }k,\ 1\leq k\leq \epsilon-1\mbox{ and }\epsilon\gamma_1\in\Delta.
\end{equation}
\end{Prop}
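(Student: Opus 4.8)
The plan is to exploit the fact that the nonnegative elements of $\Gamma$ lying below $\Delta_{>0}$ form a finite convex subset containing $0$, and that by Lemma \ref{Lemma1CS} the first convex subgroup $\Gamma_0$ of $\Gamma$ is isomorphic to $\Z$ (this applies since $1<\epsilon<\infty$). First I would observe that every $\gamma_k$ lies in $\Gamma_0$: indeed each $\gamma_k$ is positive and bounded above by any element of $\Delta_{>0}$, hence it sits in the smallest convex subgroup, which is $\Gamma_0$. Identifying $\Gamma_0$ with $\Z$, write $\gamma_1$ as a positive integer $m$. Since $\gamma_1$ is by definition the smallest positive element of $\Gamma$ (as $\gamma_0=0<\gamma_1$ and there is nothing strictly between them in the list $0=\gamma_0<\gamma_1<\cdots$), and $\Gamma_0\cong\Z$ has a smallest positive element, $\gamma_1$ must be that generator, i.e.\ $\gamma_1$ corresponds to $1\in\Z$.

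Next I would argue that the set $\{\gamma\in\Gamma_{\geq 0}\mid\gamma<\Delta_{>0}\}$ is closed under the "successor" operation inside $\Gamma_0\cong\Z$ up to the point where it meets $\Delta_{>0}$. Concretely: the elements $0,\gamma_1,2\gamma_1,3\gamma_1,\dots$ are exactly the nonnegative elements of $\Gamma_0$. As long as $k\gamma_1<\Delta_{>0}$, the element $k\gamma_1$ belongs to our list. Conversely, since the list has exactly $\epsilon$ elements and they are strictly increasing elements of $\Gamma_0\cong\Z_{\geq 0}=\{0,\gamma_1,2\gamma_1,\dots\}$, they must be the first $\epsilon$ of them, namely $0,\gamma_1,\dots,(\epsilon-1)\gamma_1$. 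This gives $\gamma_k=k\gamma_1$ for $1\leq k\leq\epsilon-1$.

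Finally, for the claim $\epsilon\gamma_1\in\Delta$: the element $\epsilon\gamma_1$ is the next element of $\Gamma_0$ after $\gamma_{\epsilon-1}=(\epsilon-1)\gamma_1$, so by the definition of $\epsilon$ (maximality of the list) we have $\epsilon\gamma_1\not<\Delta_{>0}$; that is, $\epsilon\gamma_1\geq\delta$ for some $\delta\in\Delta_{>0}$. But $\Delta_{>0}$ has a smallest element as well — call it $\delta_0$ — because $\Delta\cap\Gamma_0$ is a nonzero subgroup of $\Gamma_0\cong\Z$ (nonzero since $\epsilon<\infty$ forces some positive element of $\Delta$ into $\Gamma_0$), hence infinite cyclic; and $\epsilon\gamma_1\leq\delta_0$ since $\epsilon\gamma_1$ still satisfies $\epsilon\gamma_1\leq\delta$ for every $\delta\in\Delta_{>0}$ (each such $\delta$, being positive, is $\geq\delta_0$, and $\gamma_{\epsilon-1}<\delta$ means $(\epsilon-1)\gamma_1<\delta$, so $\epsilon\gamma_1\leq\delta$ in $\Z$). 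Thus $\epsilon\gamma_1=\delta_0\in\Delta$.

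The main obstacle I anticipate is bookkeeping the two-sided comparison cleanly: one must be careful that "$\gamma<\Delta_{>0}$" means $\gamma<\delta$ for \emph{all} $\delta\in\Delta_{>0}$, and then translate the boundary case $\epsilon\gamma_1$ correctly — showing simultaneously that it fails to be strictly below all of $\Delta_{>0}$ (so it is not in the list) yet is $\leq$ every positive element of $\Delta$ (so it equals the least one, which lies in $\Delta$). Everything else reduces, via Lemma \ref{Lemma1CS}, to elementary arithmetic in $\Z$.
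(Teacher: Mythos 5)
Your proof is correct in substance but takes a genuinely different route from the paper's. The paper gives a direct, self-contained induction: since $\gamma_1$ is the smallest positive element of $\Gamma$ (forced by $\epsilon>1$ and the defining property of the list), one shows $\gamma_{k+1}=\gamma_k+\gamma_1$ step by step, and then squeezes an element of $\Delta$ between $\gamma_{\epsilon-1}$ and $\gamma_{\epsilon-1}+\gamma_1$; it never invokes Lemma~\ref{Lemma1CS}. You instead invoke Lemma~\ref{Lemma1CS} to identify $\Gamma_0\cong\Z$ and reduce everything to arithmetic in $\Z$, which is conceptually clean once the reduction is in place. The reduction, however, is presented slightly out of order: your opening inference that each $\gamma_k$ lies in $\Gamma_0$ because it is ``bounded above by any element of $\Delta_{>0}$'' is not valid on its own --- boundedness above by $\Delta_{>0}$ only forces $\gamma_k\in\Gamma_0$ once you know that \emph{some} positive element of $\Delta$ lies in $\Gamma_0$, so that convexity applies. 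You do establish that fact in your final paragraph (any $\delta\in\Delta_{>0}$ outside $\Gamma_0$ would exceed every element of $\Gamma_0$ by convexity, so $\Delta_{>0}\cap\Gamma_0=\emptyset$ would make all of $(\Gamma_0)_{\ge0}$ smaller than $\Delta_{>0}$, contradicting $\epsilon<\infty$), but it needs to be moved to the front of the argument and spelled out. Once that is fixed, the rest --- identifying $\gamma_1$ with the generator of $\Gamma_0$, observing the list is downward closed in $\{0,\gamma_1,2\gamma_1,\ldots\}$, and concluding that $\epsilon\gamma_1=\min\Delta_{>0}\in\Delta$ --- is airtight. Your approach buys transparency by localizing the problem to $\Z$; the paper's buys independence from the auxiliary structural lemma.
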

\begin{proof}
We will prove it by induction on $k$. For $k=1$, the first assertion of (\ref{eqannextgamma}) is trivially satisfied. Assume that for a given $k$, $1\leq k\leq \epsilon-2$, we have $\gamma_k=k\gamma_1$. We will show that $\gamma_{k+1}=(k+1)\gamma_1$. Since $\gamma_k=k\gamma_1$, we have
\[
(k+1)\gamma_1=k\gamma_1+\gamma_1=\gamma_k+\gamma_1,
\]
hence we have to show that $\gamma_{k+1}=\gamma_k+\gamma_1$.

Since $\gamma_k<\gamma_{k+1}$ we have $0< \gamma_{k+1}-\gamma_k$ and because $\gamma_1$ is the smallest positive element of $\Gamma$, we obtain that $\gamma_1\leq\gamma_{k+1}-\gamma_k$. Hence, $\gamma_k+\gamma_1\leq \gamma_{k+1}$. Since $\gamma_k+\gamma_1\leq \gamma_{k+1}<\Delta_{>0}$, there exists $j$, $1\leq j\leq \epsilon-1$, such that $\gamma_k+\gamma_1=\gamma_j$. On the other hand, since $\gamma_k<\gamma_j$, by our assumption on $\gamma_i$'s, $\gamma_{k+1}\leq\gamma_j=\gamma_k+\gamma_1$. Therefore, $\gamma_{k+1}=\gamma_k+\gamma_1$, which is what we wanted to prove.

Since $\gamma_{\epsilon-1}$ is the largest element in $\Gamma_{\geq 0}$ which is smaller than every element $\delta\in\Delta_{>0}$, there exists $\delta\in \Delta$ such that
\[
\gamma_{\epsilon-1}<\delta\leq\gamma_{\epsilon-1}+\gamma_1=(\epsilon-1)\gamma_1+\gamma_1=\epsilon\gamma_1.
\]
If $\delta<\epsilon\gamma_1$, we would have $0<\delta-\gamma_{\epsilon-1}<\gamma_1$, which is a contradiction. Therefore, $\delta=\epsilon\gamma_1$, which completes our proof.
\end{proof}

\begin{Prop}[\ding{184} $\Llr$ \ding{187}]\label{Propoequ28}
Let $\Gamma$ be an ordered abelian group and $\Delta$ a subgroup of $\Gamma$ of finite index. We have:
\begin{description}
\item[(i)] $\epsilon(\Gamma\mid\Delta)\leq (\Gamma:\Delta)$; and
\item[(ii)] $\epsilon(\Gamma\mid\Delta)=(\Gamma:\Delta)$ if and only if $\Delta_{\geq 0}$ has finite index in $\Gamma_{\geq 0}$, i.e., if there exist $\gamma_0,\ldots,\gamma_{n-1}\in \Gamma_{\geq 0}$ ($\gamma_0=0$) such that
\[
\Gamma_{\geq 0}=\bigcup_{i=0}^{n-1}\left(\gamma_i+\Delta_{\geq 0}\right).
\]
\end{description} 
\end{Prop}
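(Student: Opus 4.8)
The plan is to derive both parts from Lemma~\ref{Lemma1CS} and Proposition~\ref{lemmagammakk1}; the recurring mechanism is a ``distinct cosets'' bookkeeping argument. Throughout write $n:=(\Gamma:\Delta)$, which is finite by hypothesis.

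For part \textbf{(i)}, I would first check that the elements of $\Gamma_{\geq 0}$ which are smaller than $\Delta_{>0}$ lie in pairwise distinct cosets of $\Delta$. Indeed, if $0\le \gamma<\gamma'$ are two such elements and $\gamma'-\gamma\in\Delta$, then $\gamma'-\gamma$ is a positive element of $\Delta$ satisfying $\gamma'-\gamma\le\gamma'<\Delta_{>0}$ (using $\gamma\ge 0$), which is absurd. Hence the set appearing in the definition of $\epsilon(\Gamma\mid\Delta)$ injects into $\Gamma/\Delta$, so $\epsilon(\Gamma\mid\Delta)\le n<\infty$. This gives \textbf{(i)}.

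For the forward implication of \textbf{(ii)}, assume $\epsilon:=\epsilon(\Gamma\mid\Delta)=n$; the case $n=1$ is trivial, so assume $n\ge 2$. Then $1<\epsilon<\infty$, so by Lemma~\ref{Lemma1CS} the group $\Gamma$ has a smallest positive element $\gamma_1$, and by Proposition~\ref{lemmagammakk1} the elements of $\Gamma_{\geq 0}$ below $\Delta_{>0}$ are exactly $0,\gamma_1,2\gamma_1,\dots,(n-1)\gamma_1$, with $n\gamma_1\in\Delta$. By the argument used in (i), these are $n$ elements lying in distinct cosets, hence a complete set of coset representatives of $\Delta$ in $\Gamma$. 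Now given $g\in\Gamma_{\geq 0}$, write $g=k\gamma_1+\delta$ with $0\le k\le n-1$ and $\delta\in\Delta$. If $\delta<0$ then $k\ge 1$ (otherwise $\delta=g\ge 0$), and $-\delta\in\Delta_{>0}$ while $k\gamma_1=g-\delta\ge -\delta$; but $k\gamma_1<\Delta_{>0}$, a contradiction. So $\delta\ge 0$ and $g\in k\gamma_1+\Delta_{\geq 0}$, giving $\Gamma_{\geq 0}=\bigcup_{k=0}^{n-1}\bigl(k\gamma_1+\Delta_{\geq 0}\bigr)$, which is the asserted decomposition.

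For the reverse implication, suppose $\Gamma_{\geq 0}=\bigcup_i(\gamma_i+\Delta_{\geq 0})$ for finitely many $\gamma_i\in\Gamma_{\geq 0}$ (again the case $n=1$ is trivial, so assume $n\ge 2$). I would first observe that every coset $c$ of $\Delta$ meets $\Gamma_{\geq 0}$: if $g\in c$ with $g<0$, then $ng\in\Delta$ since $\Gamma/\Delta$ has order $n$, so $g-ng=(n-1)(-g)\ge 0$ lies in $c$. Consequently, for each coset $c$ the set $\{\gamma_i:\gamma_i\in c\}$ is nonempty (any $g\in c\cap\Gamma_{\geq 0}$ lies in some $\gamma_i+\Delta_{\geq 0}$, forcing $\gamma_i\in c$), and its least element $\mu_c$ is in fact $\min(c\cap\Gamma_{\geq 0})$ (for $g\in c\cap\Gamma_{\geq 0}$ one has $g\ge\gamma_i\ge\mu_c$ for the relevant index $i$). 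Minimality then forces $\mu_c<\Delta_{>0}$: if some $\delta\in\Delta_{>0}$ satisfied $\delta\le\mu_c$, then $\mu_c-\delta$ would lie in $c\cap\Gamma_{\geq 0}$ and be strictly smaller than $\mu_c$. Thus the $n$ cosets contribute $n$ distinct elements of $\Gamma_{\geq 0}$ lying below $\Delta_{>0}$, so $\epsilon(\Gamma\mid\Delta)\ge n$, and equality follows from (i).

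All computations here are short. The only step that needs any thought is the reverse direction of (ii): extracting from an arbitrary finite cover the canonical coset-minimal representatives $\mu_c$ and noticing that their very minimality is what places them below $\Delta_{>0}$, together with the small but essential observation that finiteness of $(\Gamma:\Delta)$ forces every coset to contain a non-negative element (this is where the index enters, through $ng\in\Delta$).
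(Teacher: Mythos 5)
Your proof is correct, and while part (i) and the contradiction mechanics are the same as the paper's, the overall structure differs in two notable ways. In the forward direction of (ii) you invoke Lemma~\ref{Lemma1CS} and Proposition~\ref{lemmagammakk1} to replace the abstract list $0=\gamma_0<\cdots<\gamma_{\epsilon-1}$ by the explicit arithmetic progression $0,\gamma_1,\ldots,(\epsilon-1)\gamma_1$; the paper does not need this and works directly with the $\gamma_i$, so your version is slightly less self-contained but makes the arithmetic in the contradiction ($k\gamma_1\ge -\delta$ versus $k\gamma_1<\Delta_{>0}$) more transparent. In the reverse direction of (ii), your route is genuinely different: rather than pruning the cover to one with disjoint $\Delta_{\ge 0}$-cosets and then verifying that the resulting $\gamma_i$ yield a disjoint cover of all of $\Gamma$ by $\Delta$-cosets with $\gamma_{n-1}<\Delta_{>0}$, you extract from each coset $c$ its minimum $\mu_c$ in $\Gamma_{\ge 0}$, show minimality forces $\mu_c<\Delta_{>0}$, and count. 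Both arguments hinge on the same observation (finiteness of the index lets one push a negative representative into $\Gamma_{\ge 0}$, via $r\gamma\in\Delta$ in the paper and $ng\in\Delta$ in yours), but your coset-minima version avoids the pruning step and the verification of disjointness, and gets $\epsilon\ge n$ in one stroke; it is arguably cleaner, at the cost of not producing an explicit normalized cover as a byproduct (which the paper records in Remark~\ref{RemCS} and later uses in Corollary~\ref{sgroupcor}).
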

\begin{proof}
Set $\epsilon:=\epsilon(\Gamma\mid\Delta)$ and choose $\gamma_0,\ldots,\gamma_{\epsilon-1}\in\Gamma_{\geq 0}$ such that
\[
0=\gamma_0<\ldots<\gamma_{\epsilon-1}<\Delta_{>0}.
\]
We claim that the cosets of the $\gamma_i$'s modulo $\Delta$ are disjoint, i.e, that
\[
\left(\gamma_i+\Delta\right)\cap \left(\gamma_j+\Delta\right)=\emptyset\mbox{ if }i\neq j.
\]
Assume otherwise. Then, there would exist $i$ and $j$, $0\leq i<j\leq \epsilon-1$ such that
\[
\gamma_j-\gamma_i=\delta\in\Delta.
\]
Then
\[
0<\gamma_j-\gamma_i=\delta<\gamma_j,
\]
which is a contradiction to our assumption on the $\gamma_i$'s.

Since the cosets of the $\gamma_i$'s are disjoint, we have \textbf{(i)}. To prove \textbf{(ii)} we assume first that $\epsilon=(\Gamma:\Delta)$. This, together with the fact that the $\gamma_i$'s have distinct cosets modulo $\Delta$ gives us that
\[
\Gamma=\bigcup_{i=0}^{\epsilon-1}\left(\gamma_i+\Delta\right).
\]

Now take any element $\gamma\in\left(\Gamma\right)_{\geq 0}$. Since $\gamma\in\Gamma$, there exist $i$, $0\leq i\leq \epsilon-1$, and $\delta\in\Delta$ such that $\gamma=\gamma_i+\delta$. We claim that $\delta\geq 0$. Indeed, if $\delta<0$, then
\[
0\leq\gamma=\gamma_i+\delta<\gamma_i,
\]
and consequently $\gamma=\gamma_j$ for some $j$. This implies that
\[
\left(\gamma_i+\Delta\right)\cap \left(\gamma_j+\Delta\right)\neq\emptyset\mbox{ and }\gamma_i\neq\gamma_i,
\]
and this is a contradiction to what we proved in the previous paragraph. Therefore,
\[
\left(\Gamma\right)_{\geq 0}=\bigcup_{i=0}^{\epsilon-1}\left(\gamma_i+\left(\Delta\right)_{\geq 0}\right).
\]

Assume now that
\begin{equation}\label{equatgrousme}
\left(\Gamma\right)_{\geq 0}=\bigcup_{i=0}^{n-1}\left(\gamma_i+\left(\Delta\right)_{\geq 0}\right),
\end{equation}
for some $\gamma_0,\ldots,\gamma_{n-1}\in \Gamma_{\geq 0}$. We claim that we can choose the $\gamma_i$'s such that
\[
\left(\gamma_i+\Delta_{\geq 0}\right)\cap \left(\gamma_j+\Delta_{\geq 0}\right)= \emptyset,\mbox{ for }0\leq i< j\leq n-1.
\]
Indeed, if for some $i$ and $j$, $0\leq i,j\leq n-1$, we have
\[
\left(\gamma_i+\Delta_{\geq 0}\right)\cap \left(\gamma_j+\Delta_{\geq 0}\right)\neq \emptyset,
\]
then there exist $\delta_i,\delta_j\in\Delta_{\geq 0}$ such that $\gamma_i+\delta_i=\gamma_j+\delta_j$. Assume, without loss of generality, that $\gamma_j>\gamma_i$. Then $\delta_i-\delta_j=\gamma_j-\gamma_i\in \Delta_{\geq 0}$ and consequently
\[
\gamma_j=\gamma_i+\delta_i-\delta_j\in\gamma_i+\Delta_{\geq 0}.
\]
Hence, $\gamma_j+\Delta_{\geq 0}\subseteq \gamma_i+\Delta_{\geq 0}$ and we can remove $\gamma_j+\Delta_{\geq 0}$ in (\ref{equatgrousme}). Since there are only finitely many $\gamma_i$'s, we proceed as above until we reach disjoint cosets modulo $\Delta_{\geq 0}$. Assume, without loss of generality, that
\[
0=\gamma_0<\gamma_1<\ldots<\gamma_{n-1}.
\]
We will show that
\[
\Gamma=\bigcup_{i=0}^{n-1}\left(\gamma_i+\Delta\right),
\]
that $\left(\gamma_i+\Delta\right)\cap \left(\gamma_j+\Delta\right)=\emptyset$ when $i\neq j$ and that $\gamma_{n-1}<\delta$ for every $\delta\in\Delta_{>0}$. This will imply that $(\Gamma:\Delta)=n\leq \epsilon$, and since $\epsilon\leq (\Gamma:\Delta)$ we conclude that $\epsilon=(\Gamma:\Delta)$.

Take $\gamma\in\Gamma$. If $\gamma\geq 0$, then by our assumption
\[
\gamma\in\gamma_i+\Delta_{\geq 0}\subseteq \gamma_i+\Delta\mbox{ for some }i, 0\leq i\leq n-1
\]
and if $\gamma\in\Delta$, then
\[
\gamma\in \gamma_0+\Delta,
\]
because $\gamma_0=0$. Assume now that $\gamma<0$ and that $\gamma\in\Gamma\setminus\Delta$. Since $\Delta$ has finite index in $\Gamma$, there exist $\delta\in\Delta$ and $r\in\N$, $r>1$, such that $r\gamma=\delta$. Since $\gamma<0$, we have $\delta=r\gamma<\gamma$ and hence $\gamma-\delta\in\Gamma_{\geq 0}$. By our assumption, there exists $\gamma_i$ and $\delta'\in\Delta_{\geq 0}$ such that $\gamma-\delta=\gamma_i+\delta'$ and consequently $\gamma\in\gamma_i+\Delta$. Therefore,
\begin{equation}\label{eqgroupsposets}
\Gamma=\bigcup_{i=0}^{n-1}\left(\gamma_i+\Delta\right).
\end{equation}

Assume now that $\left(\gamma_i+\Delta\right)\cap\left(\gamma_j+\Delta\right)\neq \emptyset$ for some $i\leq j$. This means that there exist $\delta_i,\delta_j\in\Delta$ such that $\gamma_i+\delta_i=\gamma_j+\delta_j$. Since $i\leq j$, this means that $\delta_i-\delta_j=\gamma_j-\gamma_i\in \Delta_{\geq 0}$ and since we assumed that the cosets in (\ref{equatgrousme}) are disjoint, we must have that $i=j$. This implies that the cosets in (\ref{eqgroupsposets}) are disjoint and therefore $n=(\Gamma:\Delta)$.

It remains to show that $n\leq \epsilon$, i.e., that $\gamma_{n-1}<\delta$ for every $\delta\in\Delta_{>0}$. If this were not the case, then there would exist $\delta\in\Delta$ such that $0 < \delta\leq \gamma_{n-1}$ and consequently $\gamma_{n-1}-\delta\in\Gamma_{\geq 0}$. Since $\delta>0$ we have that $\gamma_{n-1}-\delta<\gamma_{n-1}$ and hence there exists $i$, $0\leq i<n-1$ such that $\gamma_{n-1}-\delta=\gamma_i+\delta'$. Therefore, $\gamma_{n-1}-\gamma_i\in\Delta$, which is a contradiction to the fact that the cosets in (\ref{eqgroupsposets}) are disjoint. This concludes our proof.
\end{proof}
\begin{Obs}\label{RemCS}
\textbf{(i)} We observe from the proof of the above proposition, that if $\epsilon:=\epsilon(\Gamma:\Delta)=(\Gamma:\Delta)$, then
\[
\Gamma=\bigcup_{i=0}^{\epsilon-1}\left(\gamma_i+\Delta\right)\mbox{ and }\Gamma_{\geq 0}=\bigcup_{i=0}^{\epsilon-1}\left(\gamma_i+\Delta_{\geq 0}\right),
\]
where $0=\gamma_0<\gamma_1<\ldots<\gamma_{\epsilon-1}$ are all the elements in $\Gamma_{\geq 0}$ smaller than $\Delta_{>0}$.\\
\textbf{(ii)} From the above propositions we conclude that if $\epsilon(\Gamma|\Delta)=(\Gamma:\Delta)$, then
\[
\Gamma/\Delta\simeq \Z/\epsilon \Z.
\]
\end{Obs}
\begin{Cor}\label{sgroupcor}
Assume that $\Gamma=\Gamma_1\oplus_{\rm lex}\Z$. If $\epsilon:=\epsilon(\Gamma\mid\Delta)=(\Gamma:\Delta)>1$, then for every $\gamma_1\in \left(\Gamma_1\right)_{>0}$ there exists $a\in\Z$ such that $(\gamma_1,a)\in \Delta$.
\end{Cor}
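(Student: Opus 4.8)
The plan is to deduce this immediately from the coset description of $\Gamma$ modulo $\Delta$ established in Remark \ref{RemCS}, once we exploit the special form $\Gamma=\Gamma_1\oplus_{\rm lex}\Z$. The only input specific to this lexicographic situation is the identification of the smallest positive element of $\Gamma$; after that it is pure bookkeeping.

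Concretely, I would proceed as follows. First, since the order on $\Gamma=\Gamma_1\oplus_{\rm lex}\Z$ is lexicographic with $\Gamma_1$ the significant factor, the element $(0,1)$ is the smallest positive element of $\Gamma$ (any $(g,n)$ with $g>0$ exceeds it, and any $(0,n)$ with $n\ge 1$ is $\ge(0,1)$); in particular the first convex subgroup is $\{0\}\oplus\Z\simeq\Z$, consistently with Lemma \ref{Lemma1CS}. Applying Proposition \ref{lemmagammakk1} (here $\epsilon>1$), the elements of $\Gamma_{\ge 0}$ lying below $\Delta_{>0}$ are exactly
\[
0=\gamma_0<\gamma_1<\dots<\gamma_{\epsilon-1},\qquad \gamma_k=k\cdot(0,1)=(0,k).
\]
Since we are assuming $\epsilon=(\Gamma:\Delta)$, Remark \ref{RemCS}\,\textbf{(i)} yields the coset decomposition
\[
\Gamma=\bigcup_{i=0}^{\epsilon-1}\bigl((0,i)+\Delta\bigr),
\]
i.e.\ every element of $\Gamma$ is congruent modulo $\Delta$ to some $(0,i)$ with $0\le i\le\epsilon-1$. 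Finally, given $\gamma_1\in(\Gamma_1)_{>0}$ (the $\gamma_1$ of the statement, not the smallest positive element above), apply this to the element $(\gamma_1,0)\in\Gamma$: there is an $i$ with $(\gamma_1,0)-(0,i)=(\gamma_1,-i)\in\Delta$, so $a=-i$ does the job.

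I do not expect any genuine obstacle here; the argument is three lines once the earlier results are in place. The only point requiring a little care is notational: the symbol $\gamma_1$ in the statement of the corollary is an element of $\Gamma_1$, whereas the $\gamma_1$ produced by Proposition \ref{lemmagammakk1} is the smallest positive element of $\Gamma$, namely $(0,1)$; the write-up should distinguish these clearly. An equivalent and perhaps slicker phrasing is to invoke Remark \ref{RemCS}\,\textbf{(ii)}, $\Gamma/\Delta\simeq\Z/\epsilon\Z$ with the class of $(0,1)$ as generator, so that the class of $(\gamma_1,0)$ equals that of $(0,i)$ for a suitable $0\le i\le\epsilon-1$, giving $(\gamma_1,-i)\in\Delta$.
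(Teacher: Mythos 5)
Your proof is correct and follows essentially the same route as the paper: identify $(0,1)$ as the smallest positive element so that Proposition \ref{lemmagammakk1} pins down $\gamma_k=(0,k)$, invoke Remark \ref{RemCS} for the coset decomposition of $\Gamma$ (the paper uses the decomposition of $\Gamma_{\ge 0}$, you the decomposition of all of $\Gamma$, which is an immaterial difference since $(\gamma_1,0)\ge 0$), and apply it to $(\gamma_1,0)$. Your explicit warning about the two different uses of the symbol $\gamma_1$ is a welcome clarification that the paper leaves implicit.
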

\begin{proof}
Since $(0,1)$ is the smallest element of $\Gamma_{> 0}$, Proposition \ref{lemmagammakk1} gives us that $(0,i)\notin\Delta$ for every $i$, $1\leq i<\epsilon$, and $(1,\epsilon)\in\Delta$. Since $\epsilon(\Gamma|\Delta)=(\Gamma:\Delta)$, Proposition \ref{Propoequ28} (and the remark above) gives us
\[
\Gamma_{\geq 0}=\bigcup_{i=0}^{\epsilon-1} (0,i)+\Delta_{\geq 0}.
\]
Since $(\gamma_1,0)\in \Gamma_{\geq 0}$, there exists $(b,c)$ $\in\Delta_{\geq 0}$ and $i$, $0\leq i\leq \epsilon-1$ such that $(\gamma_1,0)=(b,c)+(0,i)$.  Hence, $(\gamma_1,-i)=(b,c)\in\Delta_{\geq 0}$.
\end{proof}

As a consequence of the previous corollary, if $\Gamma=\Z^n$ (with the lexicographic ordering) and $\Delta\subseteq \Gamma$ is such that $\epsilon(\Gamma|\Delta)=(\Gamma:\Delta)$, then there exist $a_{i,j}\in\Z$, for $1\leq i\leq n-1$ and $i+1\leq j\le n$, such that
\[
(1,a_{12},\ldots,a_{1n}),\ldots, (0,\ldots,0,1,a_{n-1n})\in\Delta.
\]
Indeed, setting $\Gamma_1=\Z^{n-1}$, the previous corollary says that for every $\gamma_1\in \Z^{n-1}$, $\gamma_1>0$, there exists $a\in \Z$ such that $(\gamma_1,a)\in \Delta$. Hence, if we take
\[
\gamma_1=(1,a_{12},\ldots,a_{1n-1}),\ldots, \gamma_{n-1}=(0,\ldots,0,1)\in \Gamma_1
\]
(we could choose $a_{ij}=0$ if we wanted), there exist $a_{1n},\ldots, a_{n-1n}\in \Z$ such that
\[
(1,a_{12},\ldots,a_{1n}),\ldots, (0,\ldots,0,1,a_{n-1n})\in\Delta.
\]

We will now prove the converse of the previous statement.
\begin{Prop}
Assume that $\Gamma=\Z^n$ (with lexicographic ordering) and that $\Delta$ is a subgroup of $\Gamma$ of finite order. If there exist $a_{i,j}\in\Z$ with $1\le i\le n-1$ and $i+1\le j\le n$ such that
\[
(1,a_{12},\ldots,a_{1n}),\ldots, (0,\ldots,0,1,a_{n-1n})\in\Delta,
\]
then $\epsilon(\Gamma:\Delta)=(\Gamma:\Delta)$.
\end{Prop}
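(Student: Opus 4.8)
The plan is to deduce this from Proposition \ref{Propoequ28}(ii): it suffices to show that $\Delta_{\geq 0}$ has finite index in $\Gamma_{\geq 0}$, i.e.\ that $\Gamma_{\geq 0}$ is a finite union of cosets $\gamma+\Delta_{\geq 0}$ with $\gamma\in\Gamma_{\geq 0}$. I would prove this by induction on $n$. The case $n=1$ is immediate: a finite-index subgroup of $\Z$ is $c\Z$ for some $c\ge 1$, and $(\Z)_{\geq 0}=\bigcup_{j=0}^{c-1}(j+(c\Z)_{\geq 0})$, so Proposition \ref{Propoequ28}(ii) gives the claim.

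For the inductive step, write $v_i=(0,\dots,0,1,a_{i,i+1},\dots,a_{i,n})$ with the $1$ in position $i$, and let $\Gamma_1=\{x\in\Z^n:x_1=0\}$ be the first proper convex subgroup of $\Gamma$; identifying $(0,x_2,\dots,x_n)$ with $(x_2,\dots,x_n)$, it is order-isomorphic to $\Z^{n-1}$ with the lexicographic order. Set $\Delta_1=\Delta\cap\Gamma_1$. Since $\Gamma_1/\Delta_1$ embeds in $\Gamma/\Delta$, $\Delta_1$ has finite index in $\Gamma_1$; and $v_2,\dots,v_{n-1}$ lie in $\Delta_1$ (their first coordinate is $0$) and, under the above identification, are exactly the echelon generators for $\Z^{n-1}$. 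So the inductive hypothesis applies to $\Delta_1\subseteq\Gamma_1$ and, combined with Remark \ref{RemCS}(i), yields elements $0=\beta_0<\beta_1<\dots<\beta_{m-1}$ of $(\Gamma_1)_{\geq 0}$ with $\Gamma_1=\bigcup_j(\beta_j+\Delta_1)$ and $(\Gamma_1)_{\geq 0}=\bigcup_j(\beta_j+(\Delta_1)_{\geq 0})$.

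Now take $x\in\Gamma_{\geq 0}$. If $x_1=0$ then $x\in(\Gamma_1)_{\geq 0}$, so $x\in\beta_j+(\Delta_1)_{\geq 0}\subseteq\beta_j+\Delta_{\geq 0}$ for some $j$. If $x_1\neq 0$ then $x_1>0$, since $x\geq 0$ in the lexicographic order; set $y=x-x_1v_1$. Because $v_1$ has $1$ in its first coordinate, $y$ has first coordinate $0$, i.e.\ $y\in\Gamma_1$, hence $y=\beta_j+\eta$ with $\eta\in\Delta_1$ for some $j$. Put $\delta=x_1v_1+\eta\in\Delta$. Its first coordinate equals $x_1>0$, so $\delta>0$, i.e.\ $\delta\in\Delta_{\geq 0}$; and $x-\delta=x-x_1v_1-\eta=y-\eta=\beta_j$. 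Either way $x\in\beta_j+\Delta_{\geq 0}$ for some $j\in\{0,\dots,m-1\}$, and since conversely $\beta_j+\Delta_{\geq 0}\subseteq\Gamma_{\geq 0}$ we conclude $\Gamma_{\geq 0}=\bigcup_{j=0}^{m-1}(\beta_j+\Delta_{\geq 0})$. Proposition \ref{Propoequ28}(ii) then gives $\epsilon(\Gamma\mid\Delta)=(\Gamma:\Delta)$.

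The only genuine idea is the step for $x_1>0$: replacing $x$ by $x-x_1v_1$ ``costs'' a single element $x_1v_1$ of $\Delta$ whose leading coordinate $x_1$ is positive, so adding to it any correction $\eta\in\Delta_1$ supported in lower coordinates still produces a positive element of $\Delta$; this is what lets us use arbitrary coset representatives of $\Delta_1$ in $\Gamma_1$ rather than only non-negative ones. The hypothesis that the echelon generators carry $1$ on the diagonal is precisely what makes $x-x_1v_1$ land in the convex subgroup $\Gamma_1$, so that the induction can take over. Accordingly I expect no serious obstacle beyond correctly setting up the convex-subgroup reduction and checking that the echelon structure and finite index are inherited by $\Delta_1\subseteq\Gamma_1$; no estimate is involved.
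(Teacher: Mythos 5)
Your proof is correct. The core arithmetic idea is the same as the paper's — use the leading echelon vector $v_1$ to clear the first coordinate of a positive $x$, observing that any correction $\eta\in\Delta_1$ supported on lower coordinates still leaves $x_1v_1+\eta$ positive because its first coordinate is $x_1>0$ — but the packaging differs. The paper argues directly: it first applies Gaussian elimination to bring the echelon generators to the form $(1,0,\dots,0,a_1),\dots,(0,\dots,0,1,a_{n-1})$, then for $\gamma=(b_1,\dots,b_n)\in\Gamma_{\ge0}$ with leading nonzero coordinate $b_i>0$ it subtracts $b_i\alpha_i+\cdots+b_{n-1}\alpha_{n-1}$ and reduces the last coordinate modulo $\epsilon$ to land in one of the cosets $(0,\dots,0,j)+\Delta_{\ge0}$. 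You instead induct on the rank via the first convex subgroup $\Gamma_1=\ker(x\mapsto x_1)$, using the identity $\Gamma_1=\bigcup_j(\beta_j+\Delta_1)$ of the whole subgroup (not just its nonnegative part) from Remark \ref{RemCS}(i) applied one rank lower, so that $\eta$ may be negative. This avoids the Gaussian-elimination step entirely and makes the convex-subgroup reduction explicit; it also correctly handles the vacuous base case $n=1$. Both proofs are sound; yours is somewhat more modular in how it reuses Proposition \ref{Propoequ28} and Remark \ref{RemCS}.
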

\begin{proof}
First we observe that if there exist $a_{i,j}\in\Z$, $1< i\leq j\leq n$ such that
\[
(1,a_{12},\ldots,a_{1n}),\ldots, (0,\ldots,0,1,a_{n-1n})\in\Delta,
\]
then (using ``Gaussian elimination") there exist $a_1,\ldots, a_{n-1}\in\Z$ such that
\[
\alpha_1:=(1,0,\ldots,0,a_1),\ldots,\alpha_{n-1}:= (0,\ldots,0,1,a_{n-1})\in\Delta.
\]
If $\epsilon(\Gamma|\Delta)=1$, then $(0,\ldots,0,1)\in\Delta$ and we can use again Gaussian elimination to obtain that $(1,\ldots,0,0),\ldots, (0,\ldots,0,1)\in\Delta$ and hence $\Gamma=\Delta$.

If $\epsilon(\Gamma:\Delta)>1$, then by Lemma \ref{lemmagammakk1}, we have that $(0,\ldots,0,1),\ldots,(0,\ldots,0,\epsilon-1)\notin\Delta$ and $(0,\ldots,0,\epsilon)\in\Delta$. We will show that
\[
\Gamma_{\geq 0}=\bigcup_{i=0}^{\epsilon-1}(0,\ldots,0,i)+\Delta_{\geq 0}
\]
and the result will follow from Proposition \ref{Propoequ28} \textbf{(ii)}. Take any $\gamma=(b_1,\ldots,b_n)\in\Gamma_{\geq 0}$. If $b_1=b_2=\ldots=b_{n-1}=0$, then $b_n\geq 0$ and there exist $c'\in\N$ and $i$, $0\leq i\leq \epsilon-1$, such that $b_n=i+c'\epsilon$. Hence, $\gamma=(0,\ldots,0,i)+\delta$ where $\delta=c'(0,\ldots,0,\epsilon)\in\Delta_{\geq 0}$. If $b_j\neq 0$ for some $j$, $1\leq j\leq n-1$, then there exists $i$, $1\leq i\leq n-1$, such that $b_j=0$ for $j<i$ and $b_i>0$. Then, we can write
\[
\gamma=b_i\alpha_i+b_{i+1}\alpha_{i+1}+\ldots+b_{n-1}\alpha_{n-1}+(0,\ldots,0,c)
\]
for some $c\in\Z$. Then there exists $c'\in\Z$ and $i$, $1\leq i\leq {\epsilon-1}$, such that
\[
c=i+c'\epsilon.
\]
Therefore, $\gamma=(0,\ldots,0,i)+\delta$, where
\[
\delta=b_i\alpha_i+b_{i+1}\alpha_{i+1}+\ldots+b_{n-1}\alpha_{n-1}+c'(0,\ldots,0,\epsilon)\in\Delta_{\geq 0}.
\]
This concludes our proof.
\end{proof}
The next lemma shows that $\epsilon$ is multiplicative.
\begin{Lema}\label{epsilonmiult}
Let $\Delta\subseteq\Sigma\subseteq\Gamma$ be ordered abelian groups (with $\Delta\subseteq\Sigma$, $\Sigma\subseteq\Gamma$ and $\Delta\subseteq\Gamma$ of finite index). Then
\[
\epsilon(\Gamma\mid\Sigma)\cdot\epsilon(\Sigma\mid\Delta)= \epsilon(\Gamma\mid\Delta).
\]
In particular, $\epsilon(\Gamma\mid\Delta)=(\Gamma:\Delta)$ if and only if
\[
\epsilon(\Gamma\mid\Sigma)=(\Gamma:\Sigma)\mbox{ and }\epsilon(\Sigma\mid\Delta)=(\Sigma:\Delta).
\]
\end{Lema}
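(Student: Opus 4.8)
The plan is to reduce the whole statement to an index computation inside the first convex subgroup of $\Gamma$. First note that, by Proposition~\ref{Propoequ28}(i), each of $\epsilon(\Gamma\mid\Sigma)$, $\epsilon(\Sigma\mid\Delta)$ and $\epsilon(\Gamma\mid\Delta)$ is bounded by the corresponding (finite) group index, hence is finite. The degenerate case is immediate: if $\epsilon(\Gamma\mid\Delta)=1$, then no positive element of $\Gamma$ lies below $\Delta_{>0}$; since $\Delta_{>0}\subseteq\Sigma_{>0}$ this already forces $\epsilon(\Gamma\mid\Sigma)=1$, and since $\Sigma_{\ge 0}\subseteq\Gamma_{\ge 0}$ it forces $\epsilon(\Sigma\mid\Delta)=1$, so the identity reads $1=1\cdot 1$.

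So I would then assume $\epsilon:=\epsilon(\Gamma\mid\Delta)>1$. By Lemma~\ref{Lemma1CS} the first convex subgroup $\Gamma_0$ of $\Gamma$ is isomorphic to $\Z$; let $\rho$ be its positive generator, which is then the least positive element of $\Gamma$. The key ingredient is the following self-contained sublemma: \emph{if $G$ is an ordered abelian group with a least positive element $g_0$ and $H\le G$ has finite index, then $H\cap\Z g_0=\Z(\epsilon(G\mid H)\,g_0)$ and $\epsilon(G\mid H)\,g_0=\min H_{>0}$.} To prove it I would first observe that $\Z g_0$ is convex, by a ``repeated subtraction'' argument exactly as in the proof of Proposition~\ref{lemmagammakk1}; then $H\cap\Z g_0$ has finite index in $\Z g_0\cong\Z$, say $H\cap\Z g_0=\Z(m g_0)$ with $m\ge 1$. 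Any element of $H_{>0}$ either lies in $\Z g_0$, hence is a positive multiple of $mg_0$, or lies outside $\Z g_0$ and then, by convexity of $\Z g_0$, exceeds every multiple of $g_0$; so $mg_0=\min H_{>0}$. Finally the elements of $G_{\ge 0}$ lying below $H_{>0}$ are precisely those below $mg_0$, i.e. $0,g_0,\dots,(m-1)g_0$, so $\epsilon(G\mid H)=m$.

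With the sublemma available, I would apply it to $(\Gamma,\Sigma)$ and $(\Gamma,\Delta)$ with $g_0=\rho$: writing $\epsilon_1:=\epsilon(\Gamma\mid\Sigma)$ and $\epsilon_2:=\epsilon(\Sigma\mid\Delta)$, this gives $\Sigma\cap\Gamma_0=\Z(\epsilon_1\rho)$ with $\min\Sigma_{>0}=\epsilon_1\rho$, and $\Delta\cap\Gamma_0=\Z(\epsilon\rho)$ with $\min\Delta_{>0}=\epsilon\rho$. Since $\Delta\subseteq\Sigma$, the element $\epsilon\rho$ lies in $\Sigma\cap\Gamma_0=\Z(\epsilon_1\rho)$, so $\epsilon_1\mid\epsilon$; write $\epsilon=\epsilon_1 q$ with $q\ge 1$. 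Because $\min\Sigma_{>0}=\epsilon_1\rho$ lies in $\Gamma_0$, the group $\Sigma$ has a least positive element, namely $\epsilon_1\rho$, so the sublemma applies to $(\Sigma,\Delta)$ with $g_0=\epsilon_1\rho$ and gives $\Delta\cap\Z(\epsilon_1\rho)=\Z(\epsilon_2\,\epsilon_1\rho)$. On the other hand, since $\Z(\epsilon_1\rho)\subseteq\Gamma_0$ we have $\Delta\cap\Z(\epsilon_1\rho)=(\Delta\cap\Gamma_0)\cap\Z(\epsilon_1\rho)=\Z(\epsilon\rho)\cap\Z(\epsilon_1\rho)=\Z(\epsilon\rho)$, the last equality because $\epsilon\rho=q\,(\epsilon_1\rho)$ is a multiple of $\epsilon_1\rho$. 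Comparing positive generators gives $\epsilon_2\epsilon_1=\epsilon$, i.e. $\epsilon(\Gamma\mid\Sigma)\cdot\epsilon(\Sigma\mid\Delta)=\epsilon(\Gamma\mid\Delta)$.

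The ``in particular'' clause would then be purely formal: Proposition~\ref{Propoequ28}(i) gives $\epsilon(\Gamma\mid\Sigma)\le(\Gamma:\Sigma)$ and $\epsilon(\Sigma\mid\Delta)\le(\Sigma:\Delta)$, while $(\Gamma:\Delta)=(\Gamma:\Sigma)(\Sigma:\Delta)$; combined with the multiplicativity just proved, $\epsilon(\Gamma\mid\Delta)=(\Gamma:\Delta)$ holds if and only if equality holds in both of the previous inequalities. The only place where any care is needed is the sublemma --- specifically checking that $\Z g_0$ is convex and that the elements of $H_{>0}$ outside $\Z g_0$ dominate it --- but this is a short, standard argument, and I do not anticipate a genuine obstacle; everything else is bookkeeping with indices of subgroups of $\Z$.
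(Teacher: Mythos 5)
Your proof is correct, and it reaches the result by a genuinely different route than the paper. The paper's argument is a direct two-sided counting: taking $0=\sigma_0<\cdots<\sigma_{r-1}$ to be the elements of $\Sigma_{\geq 0}$ below $\Delta_{>0}$ and $0=\gamma_0<\cdots<\gamma_{s-1}$ the elements of $\Gamma_{\geq 0}$ below $\Sigma_{>0}$, it shows that every sum $\sigma_i+\gamma_j$ lies in $\Gamma_{\geq 0}$ below $\Delta_{>0}$ (yielding $rs\leq\epsilon(\Gamma\mid\Delta)$) and that, conversely, every element of $\Gamma_{\geq 0}$ below $\Delta_{>0}$ has the form $\sigma_j+\gamma_i$ (yielding the reverse inequality). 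You instead reduce the whole question to index arithmetic inside the first convex subgroup $\Gamma_0\cong\Z$, via the reformulation that $\epsilon(G\mid H)$ equals the index of $H\cap\Z g_0$ in $\Z g_0$, where $g_0$ is the least positive element of $G$; multiplicativity then falls out of the usual multiplicativity of indices in $\Z$. Both proofs are sound, and yours has the virtue of making the multiplicativity transparently a statement about subgroups of $\Z$; the paper's is more self-contained, since it does not need Lemma~\ref{Lemma1CS} nor the structure theorem for the initial segment. You could, in fact, have dispensed with reproving your sublemma and simply cited Proposition~\ref{lemmagammakk1}: it already asserts that the elements of $\Gamma_{\geq 0}$ below $\Delta_{>0}$ are exactly $0,\gamma_1,\ldots,(\epsilon-1)\gamma_1$ and that $\epsilon\gamma_1\in\Delta$, from which $\Delta\cap\Z\gamma_1=\Z(\epsilon\gamma_1)$ and $\min\Delta_{>0}=\epsilon\gamma_1$ follow immediately.
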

\begin{proof}
Let $r=\epsilon(\Sigma\mid\Delta)$ and $s=\epsilon(\Gamma\mid\Sigma)$ and choose elements $\sigma_0,\ldots,\sigma_{r-1}\in\Sigma_{\geq 0}$ and $\gamma_0,\ldots,\gamma_{s-1}\in\Gamma_{\geq 0}$ such that
\[
0=\sigma_0<\ldots<\sigma_{r-1}<\Delta_{> 0}\mbox{ and }0=\gamma_0<\ldots<\gamma_{s-1}<\Sigma_{> 0}.
\]

If $\delta\in\Delta_{>0}$, then $\sigma_{r-1}<\delta$ and since $\Delta\subseteq\Sigma$ we have $\delta-\sigma_{r-1}\in\Sigma_{>0}$. Hence, $\gamma_{s-1}<\delta-\sigma_{r-1}$ which implies that
\[
\sigma_i+\gamma_j\leq \sigma_{r-1}+\gamma_{s-1}<\delta,\mbox{ for every }i\mbox{ and }j, 0\leq i\leq r-1\mbox{ and }0\leq j\leq s-1.
\]
Therefore,
\[
\epsilon(\Gamma\mid\Sigma)\cdot\epsilon(\Sigma\mid\Delta)=r\cdot s\leq \epsilon(\Gamma\mid\Delta).
\]

Take now $\gamma\in\Gamma_{\geq 0}$ such that $\gamma<\Delta_{>0}$ and let $j$, $0\leq j\leq r-1$ be the largest index for which $\sigma_j\leq \gamma$, i.e.,
\[
j=r-1\mbox{ and }\sigma_{r-1}\leq \gamma\mbox{ or }j<r-1\mbox{ and }\sigma_j\leq \gamma<\sigma_{j+1}.
\]
We claim that $\gamma-\sigma_j<\Sigma_{>0}$ and consequently $\gamma-\sigma_j=\gamma_i$ (i.e., $\gamma=\gamma_i+\sigma_j$) for some $i$, $0\leq i\leq s-1$. Indeed, if this were not the case, then there would exist $\sigma\in\Sigma_{>0}$ such that
\[
0<\sigma\leq\gamma-\sigma_j\leq\gamma<\Delta_{>0}
\]
and then $\sigma=\sigma_{j'}$ for some $j'$, $0\leq j'\leq r-1$. This would imply that
\[
\sigma_j<\sigma_j+\sigma_{j'}\leq\gamma<\Delta_{>0}.
\]
Since $\sigma_j+\sigma_{j'}\in \Sigma_{>0}$ and $\sigma_j+\sigma_{j'}\leq\gamma<\Delta_{>0}$, this implies that $\sigma_j+\sigma_{j'}=\sigma_{j''}$ for some $j''$, $j< j''\leq r-1$. This is a contradiction to the fact that $j$ was chosen as the largest index for which $\sigma_j\leq\gamma$. We have proved that for every $\gamma\in\Gamma_{\geq 0}$, if $\gamma<\Delta_{>0}$, then there exist $i$ and $j$, $0\leq i\leq s-1$ and $0\leq j\leq r-1$, such that $\gamma=\sigma_j+\gamma_i$. Therefore,
\[
\epsilon(\Gamma\mid\Delta)\leq r\cdot s=\epsilon(\Gamma\mid\Sigma)\cdot\epsilon(\Sigma\mid\Delta).
\]
This concludes the proof that $\epsilon(\Gamma\mid\Delta)=\epsilon(\Gamma\mid\Sigma)\cdot\epsilon(\Sigma\mid\Delta)$.

Since $\epsilon(\Gamma\mid\Sigma)\leq (\Gamma:\Sigma)$, $\epsilon(\Sigma:\Delta)\leq (\Sigma:\Delta)$ and $\epsilon(\Gamma:\Delta)\leq (\Gamma:\Delta)$, we have
\[
\epsilon(\Gamma:\Delta)=\epsilon(\Gamma\mid\Sigma)\cdot\epsilon(\Sigma\mid\Delta)\leq (\Gamma:\Sigma)\cdot(\Sigma:\Delta)=(\Gamma:\Delta)\leq\epsilon(\Gamma:\Delta).
\]
If the equality holds in one of the above inequalities, then equality holds everywhere. Therefore, $\epsilon(\Gamma\mid\Delta)=(\Gamma:\Delta)$ if and only if $\epsilon(\Gamma\mid\Sigma)=(\Gamma:\Sigma)$ and $\epsilon(\Sigma\mid\Delta)=(\Sigma:\Delta)$.
\end{proof}

\begin{Teo}[\ding{183} $\Llr$ \ding{184}] We have that $\textrm{gr}_\omega(\VR_\omega)$ is finitely generated over $\textrm{gr}_\nu(\VR_\nu)$ if and only if $\left(\Gamma_\omega\right)_{\geq 0}$ is finitely generated over $\left(\Gamma_\nu\right)_{\geq 0}$.
\end{Teo}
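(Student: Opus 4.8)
The plan is to reduce everything to the statement that $\textrm{gr}_\omega(\VR_\omega)$ is a \emph{finite module} over $\textrm{gr}_\nu(\VR_\nu)$; this is harmless, since the inclusion of graded rings is integral and hence module-finiteness is equivalent to algebra-finiteness, as recalled in the introduction. Before starting, I would record two elementary facts about the two gradings. First, the set of degrees in which $\textrm{gr}_\nu(\VR_\nu)$ (resp. $\textrm{gr}_\omega(\VR_\omega)$) has a nonzero homogeneous component is precisely $(\Gamma_\nu)_{\geq 0}$ (resp. $(\Gamma_\omega)_{\geq 0}$): for $\gamma\geq 0$ in the value group, any element of that value has nonzero initial form, while components in negative or non-attained degrees vanish. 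Second, for $\gamma$ in the relevant positive cone the homogeneous component is $1$-dimensional over the degree-zero part (multiply by a fixed element of value $\gamma$), so $\textrm{gr}_\nu(\VR_\nu)_\gamma$ is a line over $K\nu$ and $\textrm{gr}_\omega(\VR_\omega)_\gamma$ is a line over $F\omega$; and $[F\omega:K\nu]=f(\omega\mid\nu)<\infty$ because $L\mid K$ is finite.

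For \ding{183}$\Rightarrow$\ding{184} I would argue by a degree count. Assuming $\textrm{gr}_\omega(\VR_\omega)$ is module-finite over $\textrm{gr}_\nu(\VR_\nu)$, graded-ness lets me pick homogeneous generators $\xi_1,\dots,\xi_t$ of degrees $\delta_1,\dots,\delta_t\in(\Gamma_\omega)_{\geq 0}$. Given any $\gamma\in(\Gamma_\omega)_{\geq 0}$, there is a nonzero $\xi$ of degree $\gamma$; expressing it in terms of the generators and taking the degree-$\gamma$ component, some coefficient must be a nonzero homogeneous element of $\textrm{gr}_\nu(\VR_\nu)$ of degree $\gamma-\delta_i$, forcing $\gamma-\delta_i\in(\Gamma_\nu)_{\geq 0}$. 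Hence $(\Gamma_\omega)_{\geq 0}=\bigcup_{i=1}^t(\delta_i+(\Gamma_\nu)_{\geq 0})$, which is exactly \ding{184}.

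For \ding{184}$\Rightarrow$\ding{183} I would build the module generators by hand. Write $(\Gamma_\omega)_{\geq 0}=\bigcup_{i=1}^t(\delta_i+(\Gamma_\nu)_{\geq 0})$, fix $z_i\in\VR_\omega$ with $\omega(z_i)=\delta_i$ and let $\zeta_i$ be its initial form; also fix $b_1,\dots,b_f\in\VR_\omega$ whose residues form a $K\nu$-basis of $F\omega$, with initial forms $\beta_1,\dots,\beta_f\in\textrm{gr}_\omega(\VR_\omega)_0=F\omega$. I claim the $tf$ homogeneous elements $\zeta_i\beta_j$ generate $\textrm{gr}_\omega(\VR_\omega)$ over $\textrm{gr}_\nu(\VR_\nu)$; since the ring is spanned by its homogeneous elements it is enough to treat a homogeneous $\xi$ of some degree $\gamma\in(\Gamma_\omega)_{\geq 0}$. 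Choosing $i$ with $\eta:=\gamma-\delta_i\in(\Gamma_\nu)_{\geq 0}$ and $c\in K$ with $\nu(c)=\eta$, the element $\zeta_i\,\overline c$ (the initial form of $z_ic$) is a nonzero element of $\textrm{gr}_\omega(\VR_\omega)_\gamma$ because that ring is a domain, hence it spans the line $\textrm{gr}_\omega(\VR_\omega)_\gamma$ over $F\omega$; writing $\xi=\bigl(\sum_j\lambda_j\beta_j\bigr)\zeta_i\overline c$ with $\lambda_j\in K\nu$ gives $\xi=\sum_j(\lambda_j\overline c)(\zeta_i\beta_j)$ with $\lambda_j\overline c\in\textrm{gr}_\nu(\VR_\nu)_\eta$, as needed. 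Thus $\textrm{gr}_\omega(\VR_\omega)$ is a finite $\textrm{gr}_\nu(\VR_\nu)$-module and a fortiori a finitely generated algebra.

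I do not expect a deep obstacle here; the one point that needs care is the converse direction, where one sees that a single family of generators indexed by a generating set of the semigroup does not suffice --- each homogeneous component of $\textrm{gr}_\omega(\VR_\omega)$ is only a line over $F\omega$ but has dimension $f$ over $K\nu$ --- so one must also throw in a residue-field basis, and use that $\textrm{gr}_\omega(\VR_\omega)$ is a domain so that these candidate generators remain nonzero in every degree.
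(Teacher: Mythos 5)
Your proof is correct and follows essentially the same approach as the paper's: in the forward direction you extract semigroup generators from the degrees of homogeneous module generators, and in the converse you build generators of the graded ring as products of initial forms of elements representing the semigroup generators with initial forms of a residue-field basis, exactly the set $\{\textrm{in}_\omega(h_j f_i)\}$ used in the paper. The only cosmetic difference is that you make the ``each homogeneous component is a one-dimensional $F\omega$-line'' observation explicit, whereas the paper encodes it implicitly by dividing by $g_{i_0}f_{i_0}$ and reading off the residue; the content and generators are the same.
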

\begin{proof}
Assume first that $\textrm{gr}_\omega(\VR_\omega)$ is finitely generated over $\textrm{gr}_\nu(\VR_\nu)$. This implies that there exist $f_1,\ldots,f_n\in \VR_\omega$ such that $\textrm{in}_\omega(f_1),\ldots,\textrm{in}_\omega(f_n)$ generate $\textrm{gr}_\omega(\VR_\omega)$ over $\textrm{gr}_\nu(\VR_\nu)$. For $\gamma\in (\Gamma_\omega)_{\geq 0}$ there exists $x\in\VR_\omega$ such that $\omega(x)=\gamma$. Since $\textrm{in}_\omega(f_1),\ldots,\textrm{in}_\omega(f_n)$ generate $\textrm{gr}_\omega(\VR_\omega)$, there exist $g_{11},\ldots,g_{1r_1},\ldots,g_{1n},\ldots,g_{nr_n}\in\VR_\nu$, such that
\begin{displaymath}
\begin{array}{ccl}
\textrm{in}_\omega(x)&=&\left(\displaystyle\sum_{j=1}^{r_1}\textrm{in}_\nu(g_{1j})\right)\textrm{in}_\omega(f_1)+\ldots+\left(\displaystyle\sum_{j=1}^{r_n}\textrm{in}_\nu(g_{nj})\right)\textrm{in}_\omega(f_n)\\
&=&\left(\displaystyle\sum_{j=1}^{r_1}\textrm{in}_\nu(g_{1j})\textrm{in}_\omega(f_1)\right)+\ldots+\left(\displaystyle\sum_{j=1}^{r_n}\textrm{in}_\nu(g_{nj})\textrm{in}_\omega(f_n)\right)
\end{array}
\end{displaymath}
By the definition of the graded algebra, this means that we just need to consider the homogeneous components of degree $\gamma$ in the equation above, hence there exists some $f_i$ and $g_{ij}$ such that
\[
\gamma=\omega(x)=\omega(g_{ij}f_i)=\nu(g_{ij})+\omega(f_i).
\]
Hence
\[
\left(\Gamma_\omega\right)_{\geq 0}=\bigcup_{i=1}^n\left(\omega(f_i)+\left(\Gamma_\nu\right)_{\geq 0}\right).
\]

For the converse, assume that $\left(\Gamma_\omega\right)_{\geq 0}$ has finite index over $\left(\Gamma_\nu\right)_{\geq 0}$. Thus there exist $f_1,\ldots,f_n\in \VR_\omega$ such that
\[
\left(\Gamma_\omega\right)_{\geq 0}=\bigcup_{i=1}^n\left(\omega(f_i)+\left(\Gamma_\nu\right)_{\geq 0}\right).
\]
Moreover, since $F\omega$ is a finite $K\nu$-vector space, there exists $h_1,\ldots,h_r\in \VR_\omega$ such that $h_1\omega,\ldots,h_r\omega$ is a $K\nu$-basis of $L\omega$. For $x\in\VR_\omega$ there exists $i_0\in\{1,\ldots,n\}$ such that $\omega(x)=\nu(g_{i_0})+\omega(f_{i_0})$ for some $g_{i_0}\in\VR_\nu$. This means that $\omega\left(\frac{x}{g_{i_0}f_{i_0}}\right)=0$. Then, there exists $a_1,\ldots,a_r\in \VR_\nu$ such that
\[
\left(\frac{x}{g_{i_0}f_{i_0}}\right)\omega=a_1\nu\cdot h_1\omega+\ldots+a_r\nu\cdot h_r\omega.
\]
Hence
\[
\omega(x-a_1 h_1g_{i_0}f_{i_0}-\cdots-a_r h_rg_{i_0}f_{i_0})>\omega(x).
\]
Consequently,
\[
\textrm{in}_\omega(x)=\sum_{j\in J}\textrm{in}_\nu(a_jg_{i_0})\textrm{in}_\omega(h_jf_{i_0}),
\]
where
\[
J=\{j\in \{1,\ldots,r\}\mid \omega (a_1 h_1g_{i_0}f_{i_0})=\omega(x)\}.
\]
Therefore, $\textrm{gr}_\omega(\VR_\omega)$ is generated over $\textrm{gr}_\nu(\VR_\nu)$ by
\[
\{\textrm{in}_\omega(h_jf_{i})\mid 1\leq j\leq r\mbox{ and }1\leq i\leq n\}.
\]
\end{proof}

\section{Proof of the necessity condition}\label{Necessity}

The statement and proof of Theorem \ref{Knaf} are by Hagen Knaf.

\begin{Teo}(\ding{182} $\Lra$ \ding{188})\label{Knaf}(Knaf)
Assume that we are in condition (\ref{sit}). Then for the local ring extension $\VR_\nu\subseteq\VR_\omega$ to be essentially finitely generated the following conditions are necessary:
\[
e(\omega|\nu) = \epsilon(\omega|\nu)\mbox{ and }d(\omega|\nu) = 1.
\]
\end{Teo}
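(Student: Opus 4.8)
The plan is to pass to the henselization, where the extension of valuation rings behaves much better, and then combine the behavior of ramification, residue degree, and defect under essentially finite generation. First I would show that if $\VR_\omega$ is essentially finitely generated over $\VR_\nu$, then $\VR_{\omega^h}$ is essentially finitely generated over $\VR_{\nu^h}$, where $\nu^h$ is the henselization of $\nu$ (with respect to the fixed $\overline\nu$) and $\omega^h$ its unique extension to $L^h = LK^h$. The point is that $\VR_{\nu^h}$ is a localization of $\VR_\nu \otimes_{\VR_\nu} \VR_{\nu^h}$-type object, or more concretely that henselization is a filtered direct limit of \'etale, hence essentially finitely generated (even smooth), local extensions; tensoring a presentation $\VR_\omega = S^{-1}\VR_\nu[x_1,\dots,x_r]$ up along $\VR_\nu \to \VR_{\nu^h}$ and localizing at the maximal ideal of $\VR_{\omega^h}$ gives such a presentation for $\VR_{\omega^h}$ over $\VR_{\nu^h}$. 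One must be a little careful that $\VR_{\omega^h}$ is indeed obtained from $\VR_\omega \otimes_{\VR_\nu}\VR_{\nu^h}$ by localization (this uses that $L^h = L\cdot K^h$ and that $\omega$ has a unique extension to $L^h$ lying over $\omega$), but this is standard henselization bookkeeping.

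Next, I would reduce to the henselian case and establish the two assertions separately there. Note $e(\omega^h|\nu^h) = e(\omega|\nu)$, $f(\omega^h|\nu^h) = f(\omega|\nu)$, $d(\omega^h|\nu^h) = d(\omega|\nu)$, and $\epsilon(\omega^h|\nu^h) = \epsilon(\omega|\nu)$ (the value groups and residue fields do not change under henselization, and $[L^h:K^h]$ is the local degree), so it suffices to prove $e = \epsilon$ and $d = 1$ under the assumption that $(K,\nu)$ is henselian and $\VR_\omega$ is essentially finitely generated over $\VR_\nu$. For the defect: when $(K,\nu)$ is henselian, $\omega$ is the unique extension of $\nu$ to $L$, so the integral closure $D$ of $\VR_\nu$ in $L$ is exactly $\VR_\omega$ itself; if $\VR_\omega = \VR_\nu[x_1,\dots,x_r]$ after localizing, then by Proposition~\ref{modvsalg} (applied to the normal domain $\VR_\nu$) — or rather its local analogue — one should be able to conclude that $D = \VR_\omega$ is module-finite over $\VR_\nu$, whence by Theorem~\ref{Endler} the extension is defectless and $\epsilon(\omega|\nu) = e(\omega|\nu)$, giving \emph{both} conclusions at once. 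The subtlety is that essential finite generation allows a localization $S^{-1}$, so $D$ need not literally equal a finitely generated $\VR_\nu$-algebra; one has to argue that, since $\VR_\omega$ is already local and integral over $\VR_\nu$, the localization is harmless — for instance because $\VR_\omega$ is the unique localization of $D$ at a maximal ideal and $D$ has only finitely many maximal ideals (lying over $\MI_\nu$), so a presentation of $\VR_\omega$ can be massaged into one of $D$.

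The main obstacle I expect is exactly this last point: controlling the multiplicative set $S$ in the definition of essential finite generation. One cannot directly invoke Theorem~\ref{Endler} because that theorem is about $D$ being a finite \emph{module}, not about the local ring $\VR_\omega$ being essentially finitely generated; bridging the two requires showing that when $\VR_\nu \subseteq \VR_\omega$ is an integral local extension of valuation rings which is essentially finitely generated, it is in fact module-finite. I would attempt this by writing $\VR_\omega = S^{-1}A$ with $A = \VR_\nu[x_1,\dots,x_r] \subseteq \VR_\omega$, noting $A$ is module-finite over $\VR_\nu$ by the argument in Proposition~\ref{modvsalg} (each $x_i$ is integral), so $A$ is noetherian-like in the relevant sense; then $\VR_\omega = S^{-1}A$ is a localization of a module-finite extension, and since $\VR_\omega$ is integral over $\VR_\nu$ every element of $\VR_\omega$ is integral, forcing the localization to not introduce new elements beyond finitely many denominators — more precisely, $\VR_\omega$ sits between $A$ and its integral closure $D$ in $L$, both module-finite over the normal domain $\VR_\nu$, so $\VR_\omega$ is module-finite over $\VR_\nu$ as a submodule of a finite module over a ring over which submodules of finite modules need not be finite in general, but here $D$ is itself a valuation ring (by henselianness) hence a Pr\"ufer-type domain where the relevant finiteness can be extracted. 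Once module-finiteness of $D = \VR_\omega$ over $\VR_\nu$ is in hand, Theorem~\ref{Endler} finishes everything. An alternative, possibly cleaner route for the defect alone: use that essential finite generation is preserved under completion and that a defect extension $L^h|K^h$ cannot have $\VR_{\omega^h}$ essentially of finite type over $\VR_{\nu^h}$ because the completion would then be essentially of finite type, contradicting the wild ramification-theoretic behavior of defect (e.g.\ via Frobenius or via the failure of the integral closure to be finite); I would keep this as a fallback if the module-finiteness argument proves awkward.
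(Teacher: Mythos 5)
Your overall strategy is the same as Knaf's proof in the paper: pass to the henselization, show that essential finite generation there forces $\VR_{\omega^h}$ to be a finite $\VR_{\nu^h}$-module, and then read off $e=\epsilon$ and $d=1$ from a dimension count. But you flag two steps as ``bookkeeping'' or ``subtlety'' and leave them unresolved, and those two steps are exactly where the real work lies.

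The first is the claim that a presentation $\VR_\omega = S^{-1}\VR_\nu[x_1,\ldots,x_r]$ passes up to one for $\VR_{\omega^h}$ over $\VR_{\nu^h}$. Tensoring with $\VR_{\nu^h}$ is plausible, but it is not immediate that the result localizes to $\VR_{\omega^h}$; what is actually needed, and what the paper proves as a separate lemma (Lemma \ref{Lemmaalgextcomp}), is the identity $\VR_{\omega}^h = \VR_\omega\cdot\VR_\nu^h$, the ring compositum inside $L^h$. The paper then never literally reduces to ``$(K,\nu)$ henselian''; instead it shows directly that $A\cdot\VR_\nu^h$ (with $A=\VR_\nu[x_1,\ldots,x_r]$) is a local ring lying between $\VR_\nu^h$ and $\VR_\omega^h$, with maximal ideal contracting to $\mathfrak q=\MI_\omega\cap A$, so that $\VR_\omega=A_{\mathfrak q}\subseteq A\cdot\VR_\nu^h$ and therefore $A\cdot\VR_\nu^h = \VR_\omega\cdot\VR_\nu^h = \VR_\omega^h$. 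Your formulation could be made to work, but ``standard henselization bookkeeping'' hides a genuine lemma.

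The second gap is the argument that, once you are over a henselian base, the localization $S^{-1}A$ introduces nothing new. The attempted justifications you offer (``$\VR_\omega$ sits between $A$ and $D$, both finite modules, $D$ is Pr\"ufer-type, so finiteness can be extracted'') are not a proof; submodules of finite modules over a non-noetherian valuation ring need not be finite, and being Pr\"ufer does not rescue that. The clean argument, which is in effect what the paper uses and which you do circle around, is this: over a henselian base $\VR_{\nu}^h$ the algebra $A\cdot\VR_\nu^h$ is integral (each $x_i$ lies in $\VR_{\omega}^h=D$), hence a finite module by Proposition \ref{modvsalg}; a module-finite domain over a henselian local ring is itself local (Hensel splits finite algebras into products of local rings, and a domain cannot split), so the localizing set $S$ consists of units and $S^{-1}A\cdot\VR_\nu^h=A\cdot\VR_\nu^h=\VR_\omega^h$. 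After that, Lemma \ref{Lemma23} (Nakayama plus Lemma \ref{Lemafinringepsl} and the rank computation from \cite[(18.3), (18.6)]{End}) gives $e=\epsilon$ and $d=1$ directly, which is cleaner than re-routing through Theorem \ref{Endler}. Your ``fallback'' via completion and Frobenius is a red herring and should be dropped. With the two flagged steps actually carried out, your outline would become a correct proof, substantially the paper's own.
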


The proof given here is based on three results, which are interesting for their own sake. Comparing the construction of the henselization $K^h$ of a valued field $(K,\nu)$ in \cite[Section 17]{End} and the construction of the henselization of a normal local ring in \cite[Section 43]{Nag}, we see that the valuation ring of the valued field $K^h$ is the henselization $\VR_{\nu}^h$ of $\VR_\nu$.

\begin{Lema}\label{Lemafinringepsl}
Assume that we are in condition (\ref{sit}). Then the ring $\VR_\omega/\MI_\nu\VR_\omega$ is a $K\nu$-vector space of dimension $\epsilon(\omega|\nu)f(\omega|\nu)$.
\end{Lema}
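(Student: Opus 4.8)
The plan is to produce an explicit $K\nu$-basis of the quotient. Note first that $\MI_\nu\cdot(\VR_\omega/\MI_\nu\VR_\omega)=0$, so the quotient is a module over $\VR_\nu/\MI_\nu=K\nu$, i.e.\ a $K\nu$-vector space. The first step is to identify $\MI_\nu\VR_\omega$ valuation-theoretically. Since $\VR_\omega$ is a valuation ring and $\{\omega(a):a\in\MI_\nu\setminus\{0\}\}=(\Gamma_\nu)_{>0}$, estimating $\omega(\sum_i a_iy_i)\ge\min_i\nu(a_i)$ for $a_i\in\MI_\nu$, $y_i\in\VR_\omega$, and conversely dividing a given $x$ by a suitable element of $\MI_\nu$, one obtains
\[
\MI_\nu\VR_\omega=\{x\in\VR_\omega:\omega(x)\ge\delta\ \text{for some}\ \delta\in(\Gamma_\nu)_{>0}\}.
\]
Hence the values taken by the elements of $\VR_\omega\setminus\MI_\nu\VR_\omega$ are exactly the elements of $(\Gamma_\omega)_{\ge 0}$ lying strictly below $(\Gamma_\nu)_{>0}$, a set which by definition of the initial index has $\epsilon:=\epsilon(\omega\mid\nu)$ elements, say $0=\gamma_0<\gamma_1<\cdots<\gamma_{\epsilon-1}$; this set is finite because $\epsilon(\omega\mid\nu)\le e(\omega\mid\nu)\le[L:K]$ by Proposition \ref{Propoequ28}(i) and the fundamental inequality.

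The second step is to fix, for each $i$, an element $t_i\in\VR_\omega$ with $\omega(t_i)=\gamma_i$ (so one may take $t_0=1$), and elements $w_1,\dots,w_f\in\VR_\omega$, with $f=f(\omega\mid\nu)$, whose residues form a $K\nu$-basis of $F\omega$. I claim that the $\epsilon f$ products $t_iw_j$ ($0\le i\le\epsilon-1$, $1\le j\le f$) form a $K\nu$-basis of $\VR_\omega/\MI_\nu\VR_\omega$. For spanning I would run a successive-approximation argument: if $x\in\VR_\omega$ is not in $\MI_\nu\VR_\omega$ then $\omega(x)=\gamma_i$ for some $i$, so $x/t_i$ is a unit of $\VR_\omega$ and its residue equals $\sum_j\overline{c_j}\,\overline{w_j}$ for suitable $c_j\in\VR_\nu$; replacing $x$ by $x-\sum_jc_jt_iw_j$ strictly raises the value while keeping us inside $\VR_\omega$, and the new value (if the element is not yet in $\MI_\nu\VR_\omega$) is again one of the $\gamma_{i'}$, necessarily with $i'>i$. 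After at most $\epsilon$ steps the remainder lies in $\MI_\nu\VR_\omega$, and collecting the subtracted terms expresses $x$ modulo $\MI_\nu\VR_\omega$ as a $K\nu$-combination of the $t_iw_j$.

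For linear independence, suppose $y=\sum_{i,j}c_{ij}t_iw_j\in\MI_\nu\VR_\omega$ with $c_{ij}\in\VR_\nu$ and not all $c_{ij}$ in $\MI_\nu$; write $y=\sum_it_iz_i$ with $z_i=\sum_jc_{ij}w_j$, let $I$ be the set of indices $i$ for which some $c_{ij}\notin\MI_\nu$, and let $i_0=\min I$. Because the $\overline{w_j}$ are $K\nu$-independent, $\overline{z_{i_0}}\ne 0$ in $F\omega$, so $\omega(t_{i_0}z_{i_0})=\gamma_{i_0}$. For $i\in I$ with $i\ne i_0$ one has $\omega(t_iz_i)=\gamma_i>\gamma_{i_0}$ by minimality of $i_0$; and for $i\notin I$ the element $z_i$ lies in $\MI_\nu\VR_\omega$, so $\omega(z_i)\ge\delta_i$ for some $\delta_i\in(\Gamma_\nu)_{>0}$, whence $\omega(t_iz_i)\ge\gamma_i+\delta_i>\gamma_{i_0}$ since $\delta_i$ dominates every $\gamma_k$. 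Thus the minimum in $\omega(\sum_it_iz_i)$ is attained uniquely at $i_0$, so $\omega(y)=\gamma_{i_0}<(\Gamma_\nu)_{>0}$, contradicting $y\in\MI_\nu\VR_\omega$. Hence all $c_{ij}\in\MI_\nu$, the images of the $t_iw_j$ are $K\nu$-independent, and the dimension equals $\epsilon f=\epsilon(\omega\mid\nu)f(\omega\mid\nu)$. The step requiring care — the main obstacle — is exactly this last estimate: one must rule out that the ``impure'' terms (those with coefficients in $\MI_\nu$) interfere with the leading term, and this works precisely because every strictly positive element of $\Gamma_\nu$ exceeds each $\gamma_k$; everything else is routine manipulation inside the valuation ring.
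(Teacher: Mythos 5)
Your proof is correct. The paper itself does not reproduce a proof for this lemma; it simply cites Endler's \textit{Valuation Theory} (18.5), so there is no in-paper argument to compare against. Your argument is the expected self-contained one: identifying $\MI_\nu\VR_\omega=\{x\in\VR_\omega:\omega(x)\ge\delta\text{ for some }\delta\in(\Gamma_\nu)_{>0}\}$ (using that $\VR_\omega$ is a valuation ring to get both the ultrametric bound and the divisibility direction), then exhibiting the $\epsilon f$ elements $t_iw_j$ and verifying span by the successive-approximation step and independence by isolating the minimal-index term whose value, being one of the $\gamma_k$, cannot lie above $(\Gamma_\nu)_{>0}$. The one spot that deserves the care you flagged is indeed the independence step: the estimate $\omega(t_iz_i)\ge\gamma_i+\delta_i>\gamma_{i_0}$ for $i\notin I$ is exactly what's needed and uses precisely that every element of $(\Gamma_\nu)_{>0}$ dominates all the $\gamma_k$. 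No gaps; the appeal to Proposition \ref{Propoequ28}(i) together with the fundamental inequality for the finiteness of $\epsilon$ is also sound since $L|K$ is finite.
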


The proof can be found in \cite{End} (18.5).

\begin{Lema}\label{Lemma23}
Assume that we are in condition (\ref{sit}). If $\VR_\nu\subseteq \VR_\omega$ is a finite ring extension, then $e(\omega|\nu) = \epsilon(\omega|\nu)$ and $d(\omega|\nu) = 1$ hold.
\end{Lema}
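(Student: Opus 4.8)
The plan is to combine Lemma~\ref{Lemafinringepsl} with the fundamental inequality and the defect formula, squeezing everything into a single chain of inequalities that must collapse to equalities.

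First I would use the module structure. Since $\VR_\nu\subseteq\VR_\omega$ is a finite ring extension, $\VR_\omega$ is a finitely generated module over the valuation ring $\VR_\nu$; being a subring of the field $L$ it is torsion-free, and a finitely generated torsion-free module over a valuation ring is free (it is projective, being finitely generated torsion-free over a valuation domain, hence free since $\VR_\nu$ is local). Thus $\VR_\omega\cong\VR_\nu^{\,n}$ for some $n$. To identify $n$, I would localize at $S=\VR_\nu\setminus\{0\}$: because $e(\omega|\nu)=(\Gamma_\omega:\Gamma_\nu)$ is finite, $\Gamma_\nu$ is cofinal in $\Gamma_\omega$ (for $\gamma\in(\Gamma_\omega)_{>0}$ some positive multiple $N\gamma\in\Gamma_\nu$ satisfies $N\gamma\geq\gamma$), so every $y\in L^\times$ with $\omega(y)<0$ can be written $y=(sy)/s$ with $s\in\VR_\nu\setminus\{0\}$ and $sy\in\VR_\omega$; hence $S^{-1}\VR_\omega=L$. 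Tensoring $\VR_\omega\cong\VR_\nu^{\,n}$ over $\VR_\nu$ with $K$ gives $L\cong K^{\,n}$, so $n=[L:K]$.

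Next I would reduce modulo $\MI_\nu$: from $\VR_\omega\cong\VR_\nu^{\,n}$ we get $\VR_\omega/\MI_\nu\VR_\omega\cong(K\nu)^{\,n}$, a $K\nu$-vector space of dimension $n=[L:K]$. On the other hand Lemma~\ref{Lemafinringepsl} computes this same dimension as $\epsilon(\omega|\nu)\,f(\omega|\nu)$. Hence $\epsilon(\omega|\nu)\,f(\omega|\nu)=[L:K]$. Now I invoke the classical identities already recorded: the defect formula gives $[L^h:K^h]=e(\omega|\nu)\,f(\omega|\nu)\,d(\omega|\nu)$; since $L^h=L\cdot K^h$ inside $\overline K$ (an algebraic extension of the henselian field $K^h$ is henselian, and it is the smallest henselian field containing $L$), we have $[L^h:K^h]\leq[L:K]$; and $\epsilon(\omega|\nu)\leq e(\omega|\nu)$ by Proposition~\ref{Propoequ28}\,\textbf{(i)}. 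Writing $e=e(\omega|\nu)$, $f=f(\omega|\nu)$, $d=d(\omega|\nu)$, $\epsilon=\epsilon(\omega|\nu)$, these combine into
\[
[L:K]\ \geq\ [L^h:K^h]\ =\ efd\ \geq\ ef\ \geq\ \epsilon f\ =\ [L:K],
\]
so equality holds throughout. From $efd=ef$ we get $d(\omega|\nu)=1$, and from $ef=\epsilon f$ we get $e(\omega|\nu)=\epsilon(\omega|\nu)$, which is exactly the assertion.

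The substantive points are the two module-theoretic reductions of the first paragraph — freeness of $\VR_\omega$ over $\VR_\nu$ and the equality $S^{-1}\VR_\omega=L$ (which rests on cofinality of $\Gamma_\nu$ in $\Gamma_\omega$, itself a consequence of the finiteness of the ramification index). Neither is deep, but both are indispensable: without the rank identification $n=[L:K]$ the chain of inequalities does not close. Everything after that is a formal assembly of facts already available in the excerpt.
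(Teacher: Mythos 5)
Your proposal is correct and follows essentially the same route as the paper: establish that $\VR_\omega$ is a free $\VR_\nu$-module of rank $[L:K]$, invoke Lemma~\ref{Lemafinringepsl} to read $\dim_{K\nu}\left(\VR_\omega/\MI_\nu\VR_\omega\right)$ as $\epsilon(\omega|\nu)f(\omega|\nu)$, and then close the chain $[L:K]=\epsilon f\leq def=[L^h:K^h]\leq[L:K]$. The only cosmetic difference is that the paper cites Endler (18.6) for the freeness and the rank identification, whereas you supply the localization and tensoring argument directly.
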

\begin{proof}
The finitely generated, torsion-free $\VR_\nu$-module $\VR_\omega$ is free of rank $[L : K]$ by \cite[(18.6)]{End}. By Nakayama's lemma, lemma \ref{Lemafinringepsl} and \cite[(18.3)]{End} one gets
\[
[L : K] = \epsilon(\omega|\nu)f (\omega|\nu) \leq d(\omega|\nu)e(\omega|\nu)f (\omega|\nu) = [L^h : K^h ] \leq [L : K]
\]
and thus the assertion.
\end{proof}

\begin{Lema}\label{Lemmaalgextcomp}
Assume that we are in condition (\ref{sit}). Then $\VR_\omega^h=\VR_\omega\cdot \VR_\nu^h$, where the ring compositum is formed within the field $L^h$.
\end{Lema}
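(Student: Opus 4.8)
The goal is to identify the valuation ring of the unique extension of $\omega$ to $L^h$ with the compositum of $\VR_\omega$ and $\VR_\nu^h$ inside $L^h$. First I would pin down the setup: fix an extension $\overline\nu$ of $\nu$ to an algebraic closure $\overline K$ of $L$, let $K^h \subseteq \overline K$ be the henselization of $(K,\nu)$, and set $L^h = L\cdot K^h$. One checks (this is standard, e.g. \cite[Section 17]{End}) that $L^h$ is a henselization of $(L,\omega)$: indeed $L^h|K^h$ is finite, $K^h$ is henselian, hence $\VR_{\overline\nu|_{L^h}}$ is the unique valuation ring of $L^h$ lying over that of $K^h$, so $\overline\nu|_{L^h}$ has a unique extension to $\overline K$, and $L^h$ is generated over $L$ by elements of $K^h$, which is minimal among henselian overfields of $K$. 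Write $\omega^h = \overline\nu|_{L^h}$, so $\VR_\omega^h := \VR_{\omega^h}$.

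Next I would prove the two inclusions. The inclusion $\VR_\omega \cdot \VR_\nu^h \subseteq \VR_\omega^h$ is immediate: $\VR_\omega = \VR_\omega^h \cap L$ and $\VR_\nu^h = \VR_\omega^h \cap K^h$ (both equal the trace of the valuation ring $\VR_{\omega^h}$ of the big field), so both generating rings sit inside $\VR_\omega^h$, and $\VR_\omega^h$ is a ring. For the reverse inclusion, take $z \in \VR_\omega^h$. Since $L^h = L \cdot K^h = L \otimes_K K^h$ is spanned over $K^h$ by a $K$-basis $b_1,\dots,b_n$ of $L$ that we may take inside $\VR_\omega$, and since $K^h = \QF(\VR_\nu^h)$, we can write $z = \sum_i c_i b_i$ with $c_i \in K^h$; clearing a common denominator $d \in \VR_\nu^h$ we get $dz = \sum_i a_i b_i$ with $a_i \in \VR_\nu^h$. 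The point is then to show $dz$ can be taken so that $d$ is a unit, i.e. to absorb the denominator — equivalently, to show $\VR_\omega \cdot \VR_\nu^h$ is a \emph{valuation ring} of $L^h$ with the same value group and residue field as $\omega^h$, whence it must coincide with $\VR_\omega^h$ by maximality of valuation rings among the subrings of $L^h$ with a given quotient field.

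The cleanest route to the last point, and the step I expect to be the main obstacle, is to verify directly that $B := \VR_\omega \cdot \VR_\nu^h$ is a valuation ring of $L^h$: given $0 \neq x \in L^h$, write $x = \sum_i c_i b_i$ with $c_i \in K^h$ and $b_i \in \VR_\omega$ a $K$-basis; since $\VR_\nu^h$ is a valuation ring of $K^h$, there is an index $i_0$ with $c_i/c_{i_0} \in \VR_\nu^h$ for all $i$, so $x/c_{i_0} \in B$, and now one must argue that either $c_{i_0} \in B$ or $c_{i_0}^{-1} \in B$, reducing the problem to elements of $K^h$ and using that $\VR_\nu^h \subseteq B$ is already a valuation ring of $K^h$ — here I would use that $\VR_\omega^h$ lies over $\VR_\nu^h$ and that $B$ contains both $\VR_\omega$ and $\VR_\nu^h$ to compare value groups, exploiting that $[L^h:K^h] = e(\omega|\nu) f(\omega|\nu) d(\omega|\nu)$ is finite. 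Once $B$ is known to be a valuation ring of $L^h$ containing $\VR_\omega^h \cap L = \VR_\omega$ and $\VR_\omega^h \cap K^h = \VR_\nu^h$, it contains the subring of $L^h$ generated by these, and conversely $B \subseteq \VR_\omega^h$ from the first inclusion; since distinct valuation rings of $L^h$ are incomparable, $B = \VR_\omega^h$. Alternatively, and perhaps more robustly, one can bypass the direct verification by noting that $\VR_\omega^h$ is a localization of the integral closure of $\VR_\nu^h$ in $L^h$ at a maximal ideal, that $\VR_\nu^h$ is henselian so this integral closure is local, hence equals $\VR_\omega^h$, and then that $\VR_\omega \cdot \VR_\nu^h$ already contains a set of generators of $L^h$ over $K^h$ that are integral over $\VR_\nu^h$ — I would present whichever of these two arguments turns out to require the fewest auxiliary facts about henselizations not already cited from \cite{End} and \cite{Nag}.
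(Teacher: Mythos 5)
Your two inclusions and the endgame are set up correctly (once $B:=\VR_\omega\cdot\VR_\nu^h$ is known to be a valuation ring of $L^h$, the inclusion $B\subseteq\VR_\omega^h$ is integral, $B$ is integrally closed in its fraction field $L^h$, hence $\VR_\omega^h\subseteq B$), but neither of your two proposed routes to the key fact that $B$ is a valuation ring actually closes. In route (a), after writing $x=\sum c_ib_i$ with $c_i\in K^h$, choosing $i_0$ with $c_i/c_{i_0}\in\VR_\nu^h$ for all $i$, and observing $y:=x/c_{i_0}\in B$, the dichotomy ``$c_{i_0}\in B$ or $c_{i_0}^{-1}\in B$'' only decides $x$ in the first branch. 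If $c_{i_0}^{-1}\in\mathfrak m_{\nu^h}\subseteq B$ while $c_{i_0}\notin B$, you learn nothing about $x^{-1}=y^{-1}c_{i_0}^{-1}$ unless you already know $y^{-1}\in B$, which is the same question for another element; this is exactly the computation that shows the integral closure of a valuation ring in a finite extension is Pr\"ufer (in fact B\'ezout), and it does not isolate any particular compositum as a valuation ring. In route (b), identifying $\VR_\omega^h$ with the integral closure $D^h$ of $\VR_\nu^h$ in $L^h$ is correct (it is local because $\VR_\nu^h$ is henselian), but containing a set of $L^h|K^h$-generators integral over $\VR_\nu^h$ is far from containing $D^h$: $\VR_\nu^h[\text{generators}]$ is in general a proper subring of the integral closure, and this is precisely the phenomenon (defect, $\epsilon<e$) this paper is about, so you cannot assume it away.

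The missing idea, and the crux of the proof, is to use the universal property of the henselization rather than trying to verify the valuation-ring axiom by hand. Because $\VR_\nu^h\subseteq B\subseteq\VR_\omega^h$ and $\VR_\omega^h$ is local and integral over $\VR_\nu^h$, the ring $B$ is local and integral over the henselian local ring $\VR_\nu^h$, hence $B$ is itself henselian (\cite[Corollary 43.13]{Nag}). The universal property of the henselization (\cite[Theorem 43.5]{Nag}) then yields an injective local $\VR_\omega$-algebra homomorphism $f\colon\VR_\omega^h\to B$. Thus $B$ is an overring, inside $L^h$, of the valuation ring $f(\VR_\omega^h)$, and an overring of a valuation ring is a valuation ring; your endgame then gives $B=\VR_\omega^h$. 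One small caution: your parenthetical ``distinct valuation rings of $L^h$ are incomparable'' is false in general (a rank-two valuation ring sits inside its rank-one coarsening); the correct closing fact is that valuation rings are integrally closed in their fraction field, or equivalently that no valuation ring properly dominates another valuation ring of the same field.
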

\begin{proof}
Recall that a local ring $R$ is called henselian, if Hensel's lemma holds for that ring. The henselization $R^h$ of a local ring is henselian \cite{Nag}, (43.3), or just for valuation domains \cite{End}, Theorem 17.11. The latter suffices for the current proof. The extension $\VR_\nu^h\subseteq \VR_\omega\cdot \VR_\nu^h$ is integral and local, since $\VR_\omega^h$ is the integral closure of $\VR_\nu^h$ in $L^h$. Therefore by \cite{Nag}, Corollary 43.13 the ring $\VR_\omega\cdot \VR_\nu^h$ is henselian, which by \cite{Nag}, Theorem 43.5 yields the existence of an injective $\VR_\omega$-algebra homomorphism $f : \VR_\omega^h \lra \VR_\omega\cdot\VR_\nu^h$. In particular, $\VR_\omega \cdot \VR_\nu^h$ is an overring of the valuation ring $f (O_\omega^h)$ within $L^h$ and is thus a valuation ring itself. 
Since $\VR_\omega\cdot \VR_\nu^h\subseteq \VR_\omega^h$ is integral the assertion follows.
%\textcolor{blue}{Alternatively, we can see this as follows. Now $f(\mathcal O_\omega^h)$ is a Henselization of $\mathcal O_{\omega}$ since it is isomorphic as an $\mathcal O_\omega$-algebra to $\mathcal O_\omega^h$. Comparing the induced inclusion $i:f(\mathcal O_\omega^h)\rightarrow \mathcal O\omega^h$ with $f$ we have by the uniqueness in the universal property \cite[(43.5)]{Nag} of Henselization that $i\circ f$ is the identity on $\mathcal O_\omega^h$, so that $f(\mathcal O_\omega^h)=\mathcal O_\omega\mathcal O_\nu^h=\mathcal O_\omega^h$.}
\end{proof}

\begin{proof}[Proof of Theorem \ref{Knaf}]
By assumption there exists a finitely generated $\VR_\nu$-algebra $A\subseteq \VR_\omega$ and a multiplicative subset $S \subseteq A$ such that $\VR_\omega=S^{-1}A$. The prime $\mathfrak{q}=\MI_\omega\cap A$ has the property $\mathfrak{q}\cap S=\emptyset$, hence $A_\mathfrak{q} = \VR_\omega$. The ring compositum $A\cdot \VR_\nu^h$ formed within $L^h$ is local, because it lies between $\VR_\nu^h$ and $\VR_\omega^h$. Its maximal ideal $\MI$ satisfies $\MI\cap A= \mathfrak{q}$, because on the one hand $\MI=\MI_\omega\cap (A \cdot\VR_\nu^h)$ and on the other hand $\MI_\omega^h\cap A=\mathfrak q$. Consequently $A_\mathfrak q\subseteq A\cdot\VR_\nu^h$, which by assumption and lemma \ref{Lemmaalgextcomp} implies
$A\cdot \VR_\nu^h=\VR_\omega^h$. In particular, by Proposition \ref{modvsalg}, $\VR_\nu^h\subseteq \VR_\omega^h$ is a finite ring extension, which by Lemma \ref{Lemma23} implies $e(\omega^h |\nu^h ) = \epsilon(\omega^h |\nu^h)$ and $d(\omega^h|\nu^h) = 1$. Since the henselization is an immediate extension, we have that $e(\omega^h|\nu^h)=e(\omega|\nu)$ and $\epsilon(\omega^h|\nu^h) =\epsilon(\omega|\nu)$. Moreover by definition $d(\omega^h|\nu^h) = d(\omega|\nu)$.
\end{proof}

\section{Reduction of Conjecture \ref{MainConj} to the separable case}\label{Secredsep}
\begin{Lema}\label{Lemamtransessentfinie}
Let $K\subseteq F\subseteq L$ be finite extensions of fields and let $\omega$ be a valuation on $L$ (whose restrictions to $F$ and $K$ we will denote by $\mu$ and $\nu$, respectively). If $\VR_\omega$ is essentially finitely generated over $\VR_{\mu}$ and $\VR_{\mu}$ is essentially finitely generated over $\VR_\nu$, then $\VR_\omega$ is essentially finitely generated over $\VR_\nu$.
\end{Lema}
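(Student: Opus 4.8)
The plan is to treat this as a purely ring-theoretic transitivity statement: essentially no valuation theory is needed beyond the fact that all the rings in sight are subrings of $L$. Write $A=\VR_\nu$, $B=\VR_\mu$, $C=\VR_\omega$, all regarded as subrings of $L$ (via $\VR_\nu\subseteq F\subseteq L$). By hypothesis there are $x_1,\dots,x_r\in B$ and a multiplicative set $S\subseteq A[x_1,\dots,x_r]$ with $B=S^{-1}A[x_1,\dots,x_r]$, and there are $y_1,\dots,y_s\in C$ and a multiplicative set $T\subseteq B[y_1,\dots,y_s]$ with $C=T^{-1}B[y_1,\dots,y_s]$. The goal is to exhibit $C$ as a localization of a finitely generated $A$-subalgebra of $L$.

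First I would set $R:=A[x_1,\dots,x_r,y_1,\dots,y_s]\subseteq L$, which is a finitely generated $A$-algebra, and observe that adjoining the $y_j$ commutes with the localization defining $B$: a polynomial ring in the $y_j$ over a localization is the localization of the polynomial ring, so $B[y_1,\dots,y_s]=(S^{-1}A[x_1,\dots,x_r])[y_1,\dots,y_s]=S^{-1}R$. Consequently $C=T^{-1}(S^{-1}R)$, where now $T\subseteq S^{-1}R$.

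Next I would clear denominators in $T$. Each $t\in T$ can be written $t=a_t/s_t$ with $a_t\in R$ and $s_t\in S$; note $a_t\neq 0$ since $0\notin T$ ($C$ being a valuation ring, hence nonzero). Let $U\subseteq R$ be the multiplicative subset generated by $S\cup\{a_t\mid t\in T\}$. Since every $s_t$ is already invertible in $S^{-1}R$, inverting $t=a_t/s_t$ amounts to inverting $a_t$; more precisely, both $T^{-1}(S^{-1}R)$ and $U^{-1}R$ satisfy the same universal property among $R$-algebras in which all elements of $S\cup\{a_t\mid t\in T\}$ become invertible, so they coincide. Therefore
$C=T^{-1}(S^{-1}R)=U^{-1}R=U^{-1}A[x_1,\dots,x_r,y_1,\dots,y_s]$
with $U$ a multiplicative subset of $A[x_1,\dots,x_r,y_1,\dots,y_s]$, which is exactly the assertion that $\VR_\omega$ is essentially finitely generated over $\VR_\nu$.

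The only point that genuinely needs care — and the one I would write out in full — is this last clearing-of-denominators step: the definition of essentially finite generation requires the multiplicative set to lie inside the finitely generated subalgebra $A[x_1,\dots,x_r,y_1,\dots,y_s]$ itself, so one must verify that absorbing the denominators coming from $S$ into $U$ does not alter the ring obtained. Everything else is formal bookkeeping about iterated localizations and polynomial adjunctions.
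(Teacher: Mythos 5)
Your argument is correct, and it reaches the conclusion by a genuinely different route from the paper's. The paper first normalizes each essential finite generation: it observes that whenever a local ring $B$ equals $S^{-1}R$ for some subring $R$ and multiplicative set $S$, one automatically has $B=R_{\mathfrak m_B\cap R}$, so one may always take the multiplicative set to be $R\setminus(\mathfrak m_B\cap R)$. With that normal form in hand, the composition is almost immediate: $\VR_\omega=\VR_\mu[x_1,\dots,x_r]_\omega$, $\VR_\mu=\VR_\nu[y_1,\dots,y_s]_\mu$, and since $\MI_\mu=\MI_\omega\cap F$ the two localizations are compatible, giving $\VR_\omega=\VR_\nu[x_1,\dots,x_r,y_1,\dots,y_s]_\omega$. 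You keep the arbitrary multiplicative sets $S,T$ and instead clear denominators by hand, replacing $T\subseteq S^{-1}R$ with the multiplicative set $U\subseteq R$ generated by $S$ together with the numerators $a_t$. This is entirely sound (the universal-property comparison you invoke is correct, and can also be checked by direct computation of fractions inside $L$), and it has the virtue of being a statement about arbitrary domains in a common overfield rather than anything special to valuation rings. The paper's normalization buys a cleaner one-line finish and is the form actually reused elsewhere in the paper (e.g.\ in Proposition~\ref{Propusred}); yours avoids invoking any structure of the rings beyond being domains in a field. One small wording caveat: your justification ``a polynomial ring in the $y_j$ over a localization is the localization of the polynomial ring'' is not literally what is being used, since the $y_j$ are specific elements of $L$ rather than indeterminates; the correct, and easy, statement is that the subring of $L$ generated by $S^{-1}A[x_1,\dots,x_r]$ and the $y_j$ equals $S^{-1}A[x_1,\dots,x_r,y_1,\dots,y_s]$, which follows by bringing the coefficients of any polynomial expression in the $y_j$ to a common denominator in $S$.
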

\begin{proof}
First we observe that if for some $R\subseteq \VR_\omega$ there exists a prime ideal $\mathfrak q$ of $R$ such that $\VR_\omega=R_\mathfrak q$, then $\VR_\omega=R_\omega$. Indeed, since $R_\mathfrak q=\VR_\omega$ we have that $R\setminus\mathfrak q\subseteq R\setminus \MI_\omega$. Therefore $\VR_\omega=R_\omega$.

Assume now that $\VR_\omega$ is essentially finitely generated over $\VR_{\mu}$ and that $\VR_{\mu}$ is essentially finitely generated over $\VR_\nu$. Then there exist $x_1,\ldots,x_r\in \VR_\omega$ and $y_1,\ldots,y_s\in \VR_{\mu}$ such that
\[
\VR_\omega=\VR_{\mu}[x_1,\ldots,x_r]_{\omega}\mbox{ and }\VR_{\mu}=\VR_\nu[y_1,\ldots,y_s]_{\mu}.
\]
Since $\VR_\nu[y_1,\ldots,y_s]\cap\MI_{\mu}=\VR_\nu[y_1,\ldots,y_s]\cap\MI_{\omega}$ this implies that
\[
\VR_\omega=\VR_{\nu}[x_1,\ldots,x_r,y_1,\ldots,y_s]_{\omega}.
\]
Therefore, $\VR_\omega$ is essentially finitely generated over $\VR_\nu$.
\end{proof}
\begin{Prop}\label{Propusred}
Assume that we are in situation (1). Let $L'$ be a subfield of $L$ containing $K$ and set $\omega'=\omega|_{L'}$. Assume that $\omega$ is the unique extension of $\omega'$ to $L$. If $\VR_{\omega'}$ is essentially finitely generated over $\VR_\nu$, $d(\omega|\omega')=1$ and $\epsilon(\omega|\omega')=e(\omega|\omega')$, then $\VR_\omega$ is essentially finitely generated over $\VR_\nu$.
\end{Prop}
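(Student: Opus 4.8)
The plan is to reduce the statement to a single step of the tower and then invoke the transitivity of essential finite generation. Concretely, apply Lemma \ref{Lemamtransessentfinie} to the chain of finite extensions $K\subseteq L'\subseteq L$, with $\omega$ on $L$, $\omega'=\omega|_{L'}$ on $L'$ and $\nu$ on $K$. Since $\VR_{\omega'}$ is assumed to be essentially finitely generated over $\VR_\nu$, that lemma shows it is enough to prove that $\VR_\omega$ is essentially finitely generated over $\VR_{\omega'}$.

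To establish this last fact, I would first observe that the extension $L|L'$ together with the valuations $\omega$ and $\omega'$ is again an instance of situation (\ref{sit}), now with $L'$ in the role of the base field. Because $\omega$ is by hypothesis the unique extension of $\omega'$ to $L$, we are precisely in case (i) of Corollary \ref{Endler1} for the extension $\omega|\omega'$; as moreover $d(\omega|\omega')=1$ and $\epsilon(\omega|\omega')=e(\omega|\omega')$, that corollary tells us that the integral closure $D'$ of $\VR_{\omega'}$ in $L$ is a finite $\VR_{\omega'}$-module, hence in particular a finitely generated $\VR_{\omega'}$-algebra.

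It then remains to recall that $\VR_\omega$ is a localization of $D'$ at a prime ideal, namely $\VR_\omega=D'_{D'\cap\MI_\omega}$ (the same fact used for the implication \ding{185}$\,\Lra\,$\ding{182} in the introduction). Writing this localization as $(S')^{-1}D'$ for a suitable multiplicative set $S'$ exhibits $\VR_\omega$ as essentially finitely generated over $\VR_{\omega'}$, and combining with Lemma \ref{Lemamtransessentfinie} as above yields the proposition. The only point that genuinely requires care is the verification that the hypotheses of Corollary \ref{Endler1}(i) transfer verbatim to the tower with $L'$ as base field — in particular the uniqueness of the extension of $\omega'$ to $L$ and the equalities $d(\omega|\omega')=1$, $\epsilon(\omega|\omega')=e(\omega|\omega')$ — and, correspondingly, that $\VR_\omega$ really is obtained from $D'$ by localizing at $D'\cap\MI_\omega$; once these book-keeping points are settled the argument is purely formal, so I do not expect any substantial obstacle.
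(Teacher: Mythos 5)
Your proposal is correct and follows the paper's own argument essentially verbatim: both invoke Corollary \ref{Endler1}(i) to get that the integral closure $D'$ of $\VR_{\omega'}$ in $L$ is a finite $\VR_{\omega'}$-module, observe that $\VR_\omega$ is the localization of $D'$ at $D'\cap\MI_\omega$, and then apply Lemma \ref{Lemamtransessentfinie} to conclude. The only cosmetic difference is that you spell out the localization step a bit more explicitly than the paper does.
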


\begin{proof}
Since $\omega$ is the only extension of $\omega'$ to $L$, $d(\omega|\omega')=1$ and $\epsilon(\omega|\omega') = e(\omega|\omega')$, Corollary \ref{Endler1} guarantees that the integral closure $D'$ of $\VR_{\omega'}$ in $L$ is a finite $\VR_{\omega'}$-module. Since $O_\omega = D'_{\omega}$, this implies that $\VR_\omega$ is finitely generated over $\VR_{\omega'}$. By our assumption and Lemma \ref{Lemamtransessentfinie} we conclude that $\VR_\omega$ is essentially finitely generated over $\VR_\nu$.
\end{proof}

\begin{Cor}
If Conjecture \ref{Conjectsep} is true, then Conjecture \ref{MainConj} is also true.
\end{Cor}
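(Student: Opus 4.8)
The plan is to reduce the general case to the separable one by interposing the separable closure $L_0$ of $K$ inside $L$. Write $\omega'=\omega|_{L_0}$, so that $L_0|K$ is separable, $L|L_0$ is purely inseparable, and $(L_0|K,\nu)$ together with $\omega'$ is again an instance of situation (\ref{sit}). Since a purely inseparable extension admits only one prolongation of a given valuation, $\omega$ is the unique extension of $\omega'$ to $L$.

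Next I would push the two hypotheses $d(\omega|\nu)=1$ and $\epsilon(\omega|\nu)=e(\omega|\nu)$ down the tower $K\subseteq L_0\subseteq L$. Choosing all henselizations compatibly inside a fixed algebraic closure relative to a fixed prolongation of $\omega$, one has $K^h\subseteq L_0^h\subseteq L^h$ with $[L^h:K^h]=[L^h:L_0^h]\,[L_0^h:K^h]$, and $e,f$ are multiplicative in towers; hence $d(\omega|\nu)=d(\omega|\omega')\,d(\omega'|\nu)$, and as each factor is a positive integer we get $d(\omega|\omega')=d(\omega'|\nu)=1$. For the initial index, Lemma \ref{epsilonmiult} gives $\epsilon(\omega|\nu)=\epsilon(\omega|\omega')\,\epsilon(\omega'|\nu)$, while $e(\omega|\nu)=e(\omega|\omega')\,e(\omega'|\nu)$ is immediate. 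Combining these with the inequalities $\epsilon(\omega|\omega')\le e(\omega|\omega')$ and $\epsilon(\omega'|\nu)\le e(\omega'|\nu)$ from Proposition \ref{Propoequ28}(i), and with the hypothesis $\epsilon(\omega|\nu)=e(\omega|\nu)$, a squeezing argument forces $\epsilon(\omega|\omega')=e(\omega|\omega')$ and $\epsilon(\omega'|\nu)=e(\omega'|\nu)$.

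Now $(L_0|K,\nu)$ with $\omega'$ satisfies situation (\ref{sit2}) and moreover $L_0|K$ is separable, so Conjecture \ref{Conjectsep}, assumed to hold, gives that $\VR_{\omega'}$ is essentially finitely generated over $\VR_\nu$. Finally, since $\omega$ is the unique extension of $\omega'$ to $L$, $d(\omega|\omega')=1$ and $\epsilon(\omega|\omega')=e(\omega|\omega')$, Proposition \ref{Propusred} applied with $L'=L_0$ produces that $\VR_\omega$ is essentially finitely generated over $\VR_\nu$, which is exactly the conclusion of Conjecture \ref{MainConj}.

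The argument is mostly bookkeeping and I do not anticipate a genuine obstacle; the only points deserving care are the multiplicativity of the defect along the tower, which requires the coherent choice of henselizations so that $[L^h:K^h]=[L^h:L_0^h]\,[L_0^h:K^h]$, and the standard fact that a purely inseparable extension has a unique valuation prolongation. Both can be found in \cite{End}.
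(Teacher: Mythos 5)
Your proof is correct and follows essentially the same route as the paper: interpose the separable closure $L'$ of $K$ in $L$, push $d=1$ and $\epsilon=e$ down the tower via multiplicativity of the defect and Lemma \ref{epsilonmiult}, then apply Conjecture \ref{Conjectsep} to $L'|K$ and Proposition \ref{Propusred} to $L|L'$. The paper's proof is terser but identical in substance, simply invoking defectlessness of the subextensions and the ``if and only if'' clause of Lemma \ref{epsilonmiult} without spelling out the squeezing.
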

\begin{proof}
Assume now that we are in situation (2) and set $L'$ to be the separable closure of $K$ in $L$ and $\omega'=\omega|_{L'}$. Since $L|L'$ is purely inseparable, $\omega$ is the unique extension of $\omega'$ to $L$.

Since $(L|K,\nu)$ is defectless, so are $(L|L',\omega')$ and $(L'|K,\nu)$. Since $\epsilon$ is multiplicative (Lemma \ref{epsilonmiult}) and $\epsilon(\omega|\nu) = e(\omega|\nu)$ then also $\epsilon(\omega|\omega')=e(\omega|\omega')$ and $\epsilon(\omega'|\nu)=e(\omega'|\nu)$. If Conjecture \ref{Conjectsep} is true, then we are in the situation of Proposition \ref{Propusred}, and therefore $\VR_\omega$ is essentially finitely generated over $\VR_\nu$.
\end{proof}

\section{Proof of Theorem \ref{Teocutko}}\label{Sectproofmanotehm}

We first review some notation that will be used in the proof. If $R$ is a local ring, we will denote its maximal ideal by $\mathfrak m_R$. The maximal ideal of a valuation ring $\mathcal O_{\nu}$ will be denoted by $\mathfrak m_{\nu}$. If a local domain $R$ is a subring of a local domain $S$ and $\MI_S\cap R=\mathfrak m_R$ we will say that $S$ dominates $R$ and that  $R\rightarrow S$ is dominating.  We will say that $\nu$ dominates $R$ if $\mathcal O_{\nu}$ dominates $R$.
Recall that if $R$ is a subring of a valuation ring $\mathcal O_{\nu}$, then $R_{\nu}$ is the localization $R_{\nu}=R_{\mathfrak m_{\nu}\cap R}$.

If a local domain $S$ dominates  a local domain $R$, $S$ is essentially of finite type over $R$ and $R$ and $S$ have the same quotient field, then $R\rightarrow S$ will be called a birational extension. If $R$ is a regular local ring, then a quadratic transform of $R$ is a dominating map $R\rightarrow S$ such that $S$ is a local ring of the blowup of the maximal ideal of $R$. A quadratic transform $R\rightarrow S$ is a quadratic transform along a valuation $\nu$ if $\nu$ dominates $S$. An iterated quadratic transform is a finite sequence of quadratic transforms.

We now present the proof of Theorem \ref{Teocutko}.

After possibly replacing $R$ with a birational extension of $R$ along $\nu$, we may assume that $R$ is normal and $\VR_{\nu}/\mathfrak m_{\nu}$ is algebraic over $R/\mathfrak m_R$. If $\dim R=1$, then $R$ is a valuation ring, so $R=\VR_{\nu}$. Since $R$ is excellent, the integral closure of $\VR_{\nu}$ in $L$ is a finite $R$-module. Thus $\VR_{\omega}$ is the localization of a finite $\VR_{\nu}$-module.

From now on, assume that $\dim R=2$.

 Let $S$ be the local ring  of the integral closure of $R$ in $L$ which is dominated by $\omega$. The ring $S$ is essentially of finite type over $R$ since $R$ is excellent.
 
First assume that the rank ${\rm rk}(\nu)>1$. By Abhyankar's inequality \cite[Theorem 1]{Ab}, this implies that ${\rm rk}(\nu)=2$ and 
\[
\Gamma_\omega\simeq (\Z^2)_{\rm lex}.
\]
By Theorem 3.7 of \cite{Ramif},  our assumption that ${\rm rk}(\nu)=2$ implies the assumption of our theorem that  $d(\omega|\nu)=1$, and that there exists a diagram
\begin{displaymath}
\begin{array}{ccc}
R_0& \longrightarrow & S_0\\
\big\uparrow&&\big\uparrow\\
R&\longrightarrow&S\\
\end{array}
\end{displaymath}
such that $R_0$ and $S_0$ are two-dimensional regular local rings, $\omega$ dominates $S_0$, all the arrows are dominating and the vertical arrows are birational extensions (i.e., localizations of blow-ups at ideals) and regular parameter $x_1(0), x_2(0)$ in $R_0$ and $y_1(0), y_2(0)$ in $S_0$ such that there exist units $\alpha, \beta\in S_0$ and $a,b,c,d\in\N$ such that $|ad-bc|=e$ and
\[
x_1(0)=\alpha\cdot y_1(0)^ay_2(0)^b\mbox{ and }x_2(0)=\beta\cdot y_1(0)^cy_2(0)^d.
\]
Further, the forms of these equations are stable under further quadratic transforms of $\omega$ along $\nu$. We have that 
$S_0$ is a local ring of a finitely generated $R_0$-algebra since $S$ is a localization of a finitely generated $R$-algebra and $S\lra S_0$ is a birational extension. That is, there exists $z_1,\ldots,z_r\in S_0$ such that $S_0=T_{\omega}$ where $T=R_0[z_1,\ldots,z_r]$.

Since $\Gamma_{\nu}$  has finite index in $\Gamma_{\omega}$, there exist $u,v\in K$ such that $\nu(u)>m\nu(v)>0$ for all positive integers $m$. Replacing $R_0$ and $S_0$ with suitable iterated quadratic transforms along $\nu$ and $\omega$ respectively, we may assume that $u,v\in R_0$ and 
$$
u=\phi x_1(0)^{s_1}x_2(0)^{s_2},\,\, v=\psi x_1(0)^{t_1}x_2(0)^{t_2}
$$
for some $s_1,s_2,t_1,t_2\in \N$ and units $\phi,\psi\in R_0$. After possibly interchanging $x_1(0)$ and $x_2(0)$, we then have that $\nu(x_1(0))>m\nu(x_2(0))>0$ for all positive integers $m$. Similarly, we can assume that $\omega(y_1(0))>m\omega(y_2(0))>0$ for all positive integers $m$. 

Since $\epsilon=e$, there exists $a\in \Z$ such that $(1,a)\in \Gamma_\nu$ (by Corollary \ref{sgroupcor}). Thus, after possibly performing some further iterated quadratic transforms along $\nu$ and $\omega$,  we can assume that there exists $f\in R_0$ such that $\omega(f)=(1,a)$ for some $a\in \Z$ and that $f=\gamma\cdot x_1(0)^mx_2(0)^n$ for some $m,n\in\N$, where $\gamma\in R_0$ is a unit. Then $m=1$ and $\nu(x_1(0))=(1,l)$ for some $l\in \Z$. Consequently, $\omega (y_1(0))=(1,m)$ for some $m\in \Z$ and $a=1$. Further, $c=0$. So
\[
x_1(0)=\alpha\cdot y_1(0)y_2(0)^b\mbox{ and }x_2(0)=\beta\cdot y_2(0)^e.
\]

Let $S_0\lra S_1$ be the iterated quadratic transform of $S_0$ along $\omega$ with
\[
S_1=S_0[y_1(1)]_\omega
\]
where
\[
y_1(0)=y_1(1)y_2(0)^r\mbox{ and }y_2(0)=y_2(1)
\]
and $r\in\Z_+$ is chosen so that $b+r=se$ for some $s\in \Z_+$. Let
\[
R_0\lra R_1=R_0[x_1(1)]_\nu
\]
be the iterated quadratic transform along $\nu$ defined by
\[
x_1(0)=x_1(1)x_2(0)^s\mbox{ and }x_2(0)=x_2(1).
\]
We have that
\[
x_1(1)=\frac{x_1(0)}{x_2(0)^s}=\alpha\cdot \beta^{-s}y_1(1)\mbox{ and }x_2(1)=\beta\cdot y_2(1)^e.
\]
In particular, $S_1$ dominates $R_1$. Further, $S_1$ is a localization of $R_1[z_1,\ldots,z_r]$.

Iterating this construction, we produce an infinite commutative diagram of regular local rings  dominated by $\omega$
\begin{displaymath}
\begin{array}{ccc}
\vdots&&\vdots\\
R_n& \longrightarrow & S_n\\
\uparrow&&\uparrow\\
\vdots&&\vdots\\
\uparrow&&\uparrow\\
R_1& \longrightarrow & S_1\\
\uparrow&&\uparrow\\
R_0&\longrightarrow&S_0\\
\end{array}
\end{displaymath}
such that $S_i$ is a localization of $R_i[z_1,\ldots,z_r]$ for every $i\in\N$ and the vertical arrows are iterated quadratic transforms. Since $R$ and $S$ are two-dimensional local domains, 
\[
\VR_\nu=\bigcup_{i=0}^\infty R_i\mbox{ and }\VR_\omega=\bigcup_{i=0}^\infty S_i
\]
by \cite[Lemma 12]{Ab}.
Therefore, $\VR_\omega$ is a localization of $\VR_\nu[z_1,\ldots,z_r]$.

Now we assume that ${\rm rk}(\nu)=1$. 
Since $d(\omega|\nu)=1$, by Theorem 3.3 and Theorem 3.7 of \cite{Ramif}, there exists a commutative diagram
\begin{displaymath}
\begin{array}{ccc}
R_0& \longrightarrow & S_0\\
\big\uparrow&&\big\uparrow\\
R&\longrightarrow&S\\
\end{array}
\end{displaymath}
such that $R_0$ and $S_0$ are two-dimensional regular local rings, $\omega$ dominates $S_0$, all the arrows are dominating and the vertical arrows are birational extensions and there exist regular parameters $x_1(0), x_2(0)$ in $R_0$ and $y_1(0), y_2(0)$ in $S_0$  such that, if the rational rank ${\rm ratrk}(\nu)=1$, then  there exists a unit $\alpha\in S_0$
such that
\begin{equation}\label{eqD3}
x_1(0)=\alpha\cdot y_1(0)^e\mbox{ and }x_2(0)= y_2(0),
\end{equation}
and if ${\rm ratrk}(\nu)=2$, then there exist units $\gamma$ and $\tau$ in $S_0$ such that 
\begin{equation}\label{eqD4}
x_1(0)=\gamma y_1(0)^ay_2(0)^b,\,\,x_2(0)=\tau y_1(0)^cy_2(0)^d
\end{equation}
where $|ad-bc|=e$. In both cases, we have that
  $\VR_\omega/\MI_\omega$ is the join of $\left(\VR_\nu/\MI_\nu\right)$ and $S_0/\MI_{S_0}$ and
\begin{equation}\label{eqD1}
f(\omega|\nu)=\left[S_0/\MI_{S_0}:R_0/\MI_{R_0}\right]=[\VR_\omega/\MI_\omega:\VR_\nu/\MI_\nu].
\end{equation}

Further, these equations are stable under suitable iterated quadratic transforms along $\nu$ and $\omega$.

Also, $S_0$ is a localization of a finitely generated $R$-algebra, since $S\lra S_0$ is a birational extension. Thus there exist $z_1,\ldots,z_r\in S_0$ such that $S_0=T_\omega$ where $T=R_0[z_1,\ldots,z_r]$.

We will treat the cases $e=1$ and $e>1$ separately. Assume first that $e=1$.  If ${\rm ratrk}(\nu)=1$, then replacing $y_1(0)$ with $\alpha\cdot y_1(0)$, we have that
\begin{equation}\label{eqpare1}
x_1(0)=y_1(0)\mbox{ and }x_2(0)=y_2(0).
\end{equation}
If ${\rm ratrk}(\nu)=2$, then from $|ad-bc|=1$, we have that there exists an iterated 	quadratic transform  $R_0\rightarrow R'$ along $\nu$ such that $S_0$ dominates $R'$ and $R'$ has regular parameters $x_1',x_2'$ such that $x_1'=\gamma' y_1(0)$ and $x_2'=\tau' y_2(0)$ where 
$\gamma',\tau'\in S_0$ are units. After replacing $R_0$ with $R'$, we then have that (\ref{eqpare1}) holds.

After possibly interchanging $x_1(0)$ and $x_2(0)$ (and $y_1(0)$ and $y_2(0)$) we can assume that $\nu(x_2(0))\geq \nu(x_1(0))$. Then the quadratic transform of $R_0$ along $\nu$ is
\[
R_0\lra R_1=R_0\left[\frac{x_2(0)}{x_1(0)}\right]_\nu.
\]
Let $x_1(1)=x_1(0)$. Then
\[
R_1/(x_1(1))\simeq R_0/\MI_{R_0}\left[\frac{x_2(0)}{x_1(0)}\right]_\mathfrak{n},
\]
where $\mathfrak{n}$ is a suitable maximal ideal and $\frac{x_2(0)}{x_1(0)}$ is transcendental over $R_0/\MI_{R_0}$. Thus, the maximal ideal of $R_1/(x_1(0))$ is generated by an element
\[
\overline{f}=\left(\frac{x_2(0)}{x_1(0)}\right)^d+\overline{a_1}\left(\frac{x_2(0)}{x_1(0)}\right)^{d-1}+\ldots+\overline{a_d}
\]
for some $d$ and $\overline a_i\in R_0/\mathfrak m_{R_0}$.
Then $R_1/\MI_{R_1}=R_0/\MI_{R_0}[z]$ where $z$ is the class of $\frac{x_2(0)}{x_1(0)}$ in $R_1/\MI_{R_1}\subseteq \VR_\nu/\MI_\nu\subseteq \VR_\omega/\MI_\omega$ and $\overline{f}$ is the minimal polynomial of $z$ over $R_0/\MI_{R_0}$. We have that $d=\left[R_1/\MI_{R_1}:R_0/\MI_{R_0}\right]$. The quadratic transform of $S_0$ along $\omega$ is
\[
S_0\lra S_1=S_0\left[\frac{y_2(0)}{y_1(0)}\right]_\omega,\mbox{ where }\frac{y_2(0)}{y_1(0)}=\frac{x_2(0)}{x_1(0)}.
\]
Let $y_1(1)=y_1(0)$. Now, as in the inclusion of $R_0\lra R_1$ we have
\[
S_1/\MI_{S_1}\simeq S_0/\MI_{S_0}[z]
\]
and the minimal polynomial of $z$ in $S_0/\MI_{S_0}\left[\frac{x_2(0)}{x_1(0)}\right]$ divides the minimal polynomial $\overline{f}$ of $z$ in $R_0/\MI_{S_0}\left[\frac{x_2(0)}{x_1(0)}\right]$. But by (\ref{eqD1}), we have
\[
\left[S_1/\MI_{S_1}:R_1/\MI_{R_1}\right]=\left[S_0/\MI_{S_0}:R_0/\MI_{R_0}\right]=f(\omega|\nu),
\]
so from
\[
\begin{array}{lll}
[S_1/\mathfrak m_{S_1}:R_0/\mathfrak m_{R_0}]
&=&[S_1/\mathfrak m_{S_1}:S_0/\mathfrak m_{S_0}]
[S_0/\mathfrak m_{S_0}:R_0/\mathfrak m_{R_0}]\\
&=&[S_1/\mathfrak m_{S_1}:R_1/\mathfrak m_{R_1}]
[R_1/\mathfrak m_{R_1}:R_0/\mathfrak m_{R_0}]
\end{array}
\]
we have that $[S_1/\mathfrak m_{S_1}:S_0/\mathfrak m_{S_0}]=d$ and so $\overline f$ is the minimal polynomial of $z$ over $S_0/\MI_{S_0}$. Now letting $x_2(1)=f$ where $f$ is a lifting of $\overline f$ to $R_1$, we have that $x_1(1), x_2(1)$ are regular parameters in $R_1$ and in $S_1$, giving an expresion like in (\ref{eqpare1}) in $R_1\lra S_1$.

Further, $S_1$ is a localization of $R_1[z_1,\ldots,z_r]$.  Iterating this construction, we have as in the case ${\rm rk}(\nu)=2$ that $\mathcal O_{\nu}=\cup_{i=0}^{\infty}R_i$ and $\mathcal O_{\omega}=\cup_{i=0}^{\infty} S_i$ are unions of iterated quadratic transforms of $R_0$ and $S_0$ along $\nu$ and $\omega$ respectively, and $S_i$ is a localization of $R_i[z_1,\ldots,z_r]$ for all $i$. Thus
 $\VR_\omega$ is a localization of $\VR_\nu[z_1,\ldots,z_r]$.

It remains to prove our theorem in the case when ${\rm rk}(\nu)=1$ and $\epsilon =e>1$. We then have that
$\nu$ is discrete of rank one by \cite[(18.4) b)]{End}. In this case we have
\[
\Gamma_\nu=e\Z\subseteq \Z=\Gamma_\omega.
\]

Then, there exists $g\in K$ such that $\nu(g)=e$ and we may assume (after possibly performing iterated quadratic transforms of $R_0$ and $S_0$ along $\nu$ and $\omega$) that $g\in R_0$ amd $g=\gamma\cdot x_1(0)^a$ for some unit $\gamma\in R_0$ and some $a\in \Z_{>0}$. Thus $a=1$ and $\nu(x_1(0))=e$, and so $\omega (y_1(0))=1$. Now $e\mid \omega(x_2(0))=\omega(y_2(0))$, so $\omega(x_2(0))=\omega(y_2(0))=es$ for some $s\in \Z_{>0}$. Thus
\[
\omega\left(\frac{y_2(0)}{y_1(0)}\right)=\omega\left(\frac{x_2(0)}{x_1(0)}\right)\geq 0.
\]
Let $R_1=R_0\left[\frac{x_2(0)}{x_1(0)}\right]_\nu$ and $S_1=S_0\left[\frac{y_2(0)}{y_1(0)}\right]_\omega$. Now $R_0\lra R_1$ is a quadratic transform along $\nu$ and $S_0\lra S_1$ is a  quadratic transform along $\omega$. As in the previous case, $S_1$ dominates $R_1$ and there exists $x_2(1)\in R_1$ such that $x_1(1)=x_1(0)$ and $x_2(1)$ are a regular system of parameters in $R_1$ such that $y_1(1)=y_1(0)$ and $y_2(1)=x_2(1)$ are regular system of parameters in $S_1$ satisfying $x_1(1)=\alpha\cdot y_1(1)^e$ and $x_2(1)=y_2(1)$ by (\ref{eqD3}). In particular, $S_1$ is a localization of $R_1[z_1,\ldots,z_r]$.

 Iterating this construction, as in the cases of ${\rm rk}(\nu)=2$ and ${\rm rk}(\nu)=1$ with $e=1$, we have that
$\mathcal O_{\nu}=\cup_{i=0}^{\infty}R_i$ and $\mathcal O_{\omega}=\cup_{i=0}^{\infty}S_i$ are unions of iterated quadratic transforms of $R_0$ and $S_0$ along $\nu$ and $\omega$ respectively., and $S_i$ is a localization of $R_i[z_1,\ldots,z_r]$ for all $i$. Thus $\mathcal O_{\omega}$ is a localization of $\mathcal O_{\nu}[z_1,\ldots,z_r]$.

\section{Proof of Theorem \ref{TeoAbh}}\label{SecAbh}

\begin{Prop}\label{PropGS} Suppose that $(K,\nu)$ is a valued field, $L$ is a finite extension field of $K$ and  $\omega$ is an extension of $\nu$ to $L$ such that 
$$
1<\epsilon(\omega|\nu)=e(\omega|\nu).
$$
Let $\Gamma_{\nu,1}$ be the first convex subgroup of $\Gamma_{\nu}$ and $\Gamma_{\omega,1}$ be the first convex subgroup of $\Gamma_{\omega}$. Then $\Gamma_{\omega,1}\cong \Z$ and in the short exact sequence of groups
\begin{equation}\label{eqAb3}
0\rightarrow \Gamma_{\omega,1}/\Gamma_{\nu,1}\rightarrow \Gamma_{\omega}/\Gamma_{\nu}\rightarrow (\Gamma_{\omega}/\Gamma_{\omega,1})/(\Gamma_{\nu}/\Gamma_{\nu,1})\rightarrow 0
\end{equation}
we have that 
$$
(\Gamma_{\omega}/\Gamma_{\omega,1})/(\Gamma_{\nu}/\Gamma_{\nu,1})=0
$$
and 
$$
\Gamma_{\omega}/\Gamma_{\nu}\cong \Gamma_{\omega,1}/\Gamma_{\nu,1}\cong \Z_e.
$$
\end{Prop}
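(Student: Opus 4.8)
The plan is to push everything into the first convex subgroups, where the hypothesis $1<\epsilon(\omega\mid\nu)=e(\omega\mid\nu)$ collapses the situation to a pair of finite-index subgroups of $\Z$. First I would record the setup: since $\Gamma_\nu\subseteq\Gamma_\omega$ has finite index $e:=e(\omega\mid\nu)\le[L:K]$, intersection with $\Gamma_\nu$ gives an inclusion-preserving bijection between the convex subgroups of $\Gamma_\omega$ and those of $\Gamma_\nu$ (as usual for finite-index extensions of ordered abelian groups); in particular $\Gamma_{\nu,1}=\Gamma_{\omega,1}\cap\Gamma_\nu$, so $\Gamma_{\nu,1}\subseteq\Gamma_{\omega,1}$ and \eqref{eqAb3} is a genuine short exact sequence of the natural maps. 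Because $1<\epsilon(\omega\mid\nu)=e<\infty$, Lemma \ref{Lemma1CS} (with $\Gamma=\Gamma_\omega$, $\Delta=\Gamma_\nu$) gives $\Gamma_{\omega,1}\cong\Z$; hence $\Gamma_{\nu,1}$ is a nonzero finite-index subgroup of $\Z$, so $\Gamma_{\nu,1}\cong\Z$ and $\Gamma_{\omega,1}/\Gamma_{\nu,1}\cong\Z/f\Z$ for $f:=(\Gamma_{\omega,1}:\Gamma_{\nu,1})$, and inside $\Z\cong\Gamma_{\omega,1}$ one trivially has $\epsilon(\Gamma_{\omega,1}\mid\Gamma_{\nu,1})=f$.

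The main point I would then prove is that the initial index of $\Gamma_\nu$ in $\Gamma_\omega$ is "concentrated" in the first convex subgroup, namely
\[
\{\gamma\in(\Gamma_\omega)_{\geq0}\mid\gamma<(\Gamma_\nu)_{>0}\}=\{\gamma\in(\Gamma_{\omega,1})_{\geq0}\mid\gamma<(\Gamma_{\nu,1})_{>0}\}.
\]
For the inclusion $\subseteq$: if $\gamma\geq0$ lies below every positive element of $\Gamma_\nu$, choose a positive $p\in\Gamma_{\nu,1}\subseteq\Gamma_{\omega,1}$; then $0\leq\gamma<p$, so convexity of $\Gamma_{\omega,1}$ forces $\gamma\in\Gamma_{\omega,1}$, while $\gamma<(\Gamma_{\nu,1})_{>0}$ is immediate. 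For $\supseteq$: given such a $\gamma\in\Gamma_{\omega,1}$ and any $\delta\in(\Gamma_\nu)_{>0}$, either $\delta\in\Gamma_{\nu,1}$ and then $\gamma<\delta$ by hypothesis, or $\delta\in\Gamma_\nu\setminus\Gamma_{\nu,1}=\Gamma_\nu\setminus\Gamma_{\omega,1}$, in which case $\delta$ is a positive element outside the convex subgroup $\Gamma_{\omega,1}$ and hence exceeds every element of $\Gamma_{\omega,1}$, in particular $\delta>\gamma$. Comparing cardinalities gives $\epsilon(\omega\mid\nu)=\epsilon(\Gamma_\omega\mid\Gamma_\nu)=\epsilon(\Gamma_{\omega,1}\mid\Gamma_{\nu,1})=f$.

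Finally I would feed this into the exact sequence \eqref{eqAb3}: its middle term has order $(\Gamma_\omega:\Gamma_\nu)=e$ and its left term has order $f$, so its right term $(\Gamma_\omega/\Gamma_{\omega,1})/(\Gamma_\nu/\Gamma_{\nu,1})$ has order $e/f$; but the previous paragraph together with the hypothesis $\epsilon(\omega\mid\nu)=e$ gives $f=e$, so this order is $1$, i.e.\ $(\Gamma_\omega/\Gamma_{\omega,1})/(\Gamma_\nu/\Gamma_{\nu,1})=0$. Then \eqref{eqAb3} degenerates to $\Gamma_\omega/\Gamma_\nu\cong\Gamma_{\omega,1}/\Gamma_{\nu,1}\cong\Z/f\Z=\Z/e\Z=\Z_e$, which, with $\Gamma_{\omega,1}\cong\Z$ already established, is the whole assertion.

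I expect the only real work to be the bookkeeping in the first two paragraphs — that $\Gamma_{\nu,1}=\Gamma_{\omega,1}\cap\Gamma_\nu$ and the convexity case-split above; everything after is a one-line count. Alternatively one can avoid the displayed claim by invoking Proposition \ref{lemmagammakk1}: with $0=\gamma_0<\gamma_1<\dots<\gamma_{\epsilon-1}$ the elements of $(\Gamma_\omega)_{\geq0}$ below $(\Gamma_\nu)_{>0}$, one has $\gamma_k=k\gamma_1$, $\gamma_1$ generates $\Gamma_{\omega,1}\cong\Z$, $\epsilon\gamma_1\in\Gamma_\nu$, and no $k\gamma_1$ with $1\le k\le\epsilon-1$ lies in $\Gamma_\nu$, forcing $\Gamma_{\nu,1}=\epsilon\gamma_1\Z$ and hence $f=\epsilon(\omega\mid\nu)$.
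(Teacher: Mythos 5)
Your proof is correct and follows essentially the same route as the paper: identify $\Gamma_{\omega,1}\cong\Z$ via Lemma \ref{Lemma1CS}, observe that the initial segment $\{0,\gamma_1,\dots,(\epsilon-1)\gamma_1\}$ sits inside $\Gamma_{\omega,1}$ and that $\Gamma_{\nu,1}=\epsilon\gamma_1\Z$ (your "concentration" claim is a slightly more explicit packaging of what the paper extracts from Proposition \ref{lemmagammakk1} and Remark \ref{RemCS}), then count orders in the exact sequence \eqref{eqAb3}. Your closing alternative via Proposition \ref{lemmagammakk1} is in fact the paper's own argument verbatim in spirit, so there is no genuine divergence in method.
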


\begin{proof} We have that $\Gamma_{\omega,1}\cong \Z$ by Lemma \ref{Lemma1CS}. The proposition now follows from Proposition \ref{lemmagammakk1}, Remark \ref{RemCS} and (\ref{eqAb3}).
\end{proof}

Suppose that $K$ is an algebraic function field over a field $k$. An algebraic local ring of $K$ is a local domain $R$ such that $R$ is essentially of finite type over $k$ and the quotient field of $R$ is $K$.

\begin{Teo}(\cite[Theorem 1.1]{KK}) \label{TheoremAbU}
Let $K$ be an algebraic function field over a field $k$, and let $\nu$ be an Abhyankar valuation on $K$.  Suppose that $\VR_\nu/\mathfrak m_\nu$ is separable over $k$ and $Z\subset \VR_\nu$  is a finite set. Then there exists an algebraic regular local ring $R$ of $K$ such that $\nu$ dominates $R$ and $\dim R={\rm ratrk}(\nu)$. Further, there exists a regular system of parameters in $R$ such that each element of $Z$ is a monomial in the regular system of parameters times a unit in $R$.
\end{Teo}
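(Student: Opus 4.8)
This is the local uniformization theorem of Knaf and Kuhlmann (\cite[Theorem 1.1]{KK}). The plan is to reduce $\nu$ to a monomial valuation on a rational subfield of $K$, pass from there to $K$ through the inertia field (elimination of ramification), descend a regular model, and finish by cleaning up $Z$ with monoidal transforms. Write $s={\rm ratrk}(\nu)$, $d={\rm trdeg}_k(\VR_\nu/\mathfrak m_\nu)$, $n={\rm trdeg}_kK$, so $n=s+d$ by hypothesis. The starting point is the structure theory of Abhyankar valuations: $\Gamma_\nu$ is finitely generated, hence $\Gamma_\nu\cong\Z^s$, and $\bar K:=\VR_\nu/\mathfrak m_\nu$ is a finitely generated extension of $k$. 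Since $\bar K|k$ is separable, fix a separating transcendence basis $\bar v_1,\dots,\bar v_d$ of $\bar K|k$ and lift it to $v_1,\dots,v_d\in\VR_\nu$; also fix a $\Z$-basis $\gamma_1,\dots,\gamma_s$ of $\Gamma_\nu$, pick preimages in $K^\times$, and replace each preimage by its inverse whenever $\gamma_i<0$, obtaining $u_1,\dots,u_s\in\VR_\nu$ with $\nu(u_i)>0$ and $\nu(u_1),\dots,\nu(u_s)$ still a $\Z$-basis of $\Gamma_\nu$. Put $F_0=k(u_1,\dots,u_s,v_1,\dots,v_d)\subseteq K$. The fundamental inequality forces ${\rm trdeg}_kF_0=n$, so these $s+d$ elements are algebraically independent over $k$ and $K|F_0$ is finite; moreover $\mathfrak m_\nu\cap k[v_1,\dots,v_d]=0$ gives $\mathfrak m_\nu\cap k[u,v]=(u_1,\dots,u_s)$, so $B_0:=k[u,v]_{\mathfrak m_\nu\cap k[u,v]}$ is a regular local ring of dimension $s$, dominated by $\nu|_{F_0}$, with regular system of parameters $u_1,\dots,u_s$ and residue field $k(\bar v_1,\dots,\bar v_d)$. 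Finally $\Gamma_{\nu|_{F_0}}=\langle\nu(u_1),\dots,\nu(u_s)\rangle=\Gamma_\nu$, so $e(\nu\mid\nu|_{F_0})=1$.

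Since ${\rm ratrk}(\nu|_{F_0})+{\rm trdeg}_k(\text{residue field of }\nu|_{F_0})=s+d=n={\rm trdeg}_kF_0$, the valuation $\nu|_{F_0}$ is itself Abhyankar, hence \emph{defectless}; therefore $d(\nu\mid\nu|_{F_0})=1$. Together with $e(\nu\mid\nu|_{F_0})=1$ and the separability of $\bar K$ over the residue field of $\nu|_{F_0}$ (a subextension of the separable extension $\bar K|k$), this is the elimination-of-ramification situation, from which one deduces that the henselization $C^h$ of the localization $C$ of the integral closure of $B_0$ in $K$ at the center of $\nu$ is finite and \'etale over $B_0^h$. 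Hence $C^h$ is regular; by faithfully flat descent $C$ is regular, of dimension $s={\rm ratrk}(\nu)$, dominated by $\nu$, with $\mathfrak m_C=(u_1,\dots,u_s)C$ and residue field $\bar K$. Thus $C$ is an algebraic regular local ring of $K$ with regular system of parameters $u_1,\dots,u_s$ whose values form a $\Z$-basis of $\Gamma_\nu$.

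It remains to monomialize $Z$. Since $\VR_\nu$ is the directed union of the local rings of iterated monoidal transforms of $C$ along $\nu$, finitely many such transforms put $Z$ inside the current ring; each monoidal transform along $\nu$ preserves regularity, the property of being an algebraic local ring of $K$ dominated by $\nu$, and the feature that the values of the regular parameters form a $\Z$-basis of $\Gamma_\nu$. For $z\in Z$ one has $\nu(z)\in(\Gamma_\nu)_{\geq 0}$, and a standard toric argument yields a finite further sequence of monoidal transforms along $\nu$ after which, simultaneously for all $z\in Z$, $\nu(z)$ is a non-negative integer combination $\sum a_i\nu(x_i)$ of the values of the regular parameters $x_1,\dots,x_s$ of the resulting ring $R$. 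Then $z/(x_1^{a_1}\cdots x_s^{a_s})$ has value $0$, hence is a unit of $R$, so $z$ is a monomial in $x_1,\dots,x_s$ times a unit; $R$ is the required algebraic regular local ring of $K$ of dimension ${\rm ratrk}(\nu)$ dominated by $\nu$.

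The crucial input is the defectlessness of Abhyankar valuations invoked in the second paragraph: in characteristic $0$ it is automatic (and one could alternatively quote Hironaka to produce a regular model), whereas in characteristic $p>0$ it is the genuinely hard point behind the statement. The other steps needing care are the precise elimination-of-ramification argument (that $C^h$ is finite \'etale over $B_0^h$, which also quietly absorbs any inseparability of $K|F_0$, since the henselian degree $[K^h:F_0^h]$ sees only $e\cdot f\cdot d$), and the verification that the combinatorial monomialization in the third paragraph can indeed be carried out by monoidal transforms taken along $\nu$.
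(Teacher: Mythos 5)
First, note that the paper does not prove this statement at all: it is quoted verbatim from Knaf--Kuhlmann \cite[Theorem 1.1]{KK}, so your proposal is an attempted independent proof of that (deep) theorem and must stand on its own. Your first paragraph (the choice of $u_i,v_j$, of $F_0=k(u,v)$ and of the monomial model $B_0=k[u,v]_{(u_1,\ldots,u_s)}$, with $e(\nu\mid\nu|_{F_0})=1$, residue field $k(\bar v)$ and defectlessness via the Generalized Stability Theorem) is fine. The decisive gap is in the second paragraph: from $e=1$, $d=1$ and separability of the residue extension you infer that $C^h$ is finite \'etale over $B_0^h$, hence that $C$ (the localization at the center of $\nu$ of the integral closure of $B_0$ in $K$) is regular. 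That inference is false for the initial model your construction produces: valuation-theoretic unramifiedness does not control the center. Concretely, take ${\rm char}\,k\ne 2$, $F_0=k(x,y)$ with the monomial valuation $\nu_0(x)=1$, $\nu_0(y)=\sqrt 2$, and $K=F_0(t)$ with $t^2=x^2+y^2$, $\nu$ an extension of $\nu_0$ with $\nu(t)=1$. Then $\nu$ is Abhyankar, $\Gamma_\nu=\Z+\Z\sqrt 2$ has $\Z$-basis $\nu(x),\nu(y)$, and $e=f=d=1$ with trivial (hence separable) residue extension, so your construction may legitimately take $u_1=x$, $u_2=y$, $B_0=k[x,y]_{(x,y)}$; but $C$ is then the local ring at the vertex of the quadric cone $t^2=x^2+y^2$, which is not regular, so $C^h$ is not \'etale over $B_0^h$. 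To repair this one must modify the model along $\nu$ (in the example one quadratic transform suffices), and proving that a suitable model always exists --- in any characteristic, with $f>1$, via henselian elements/elimination of ramification --- is precisely the content of \cite{KK} beyond the Stability Theorem. So the step you flag as merely ``needing care'' is not a deduction from the displayed invariants; it is essentially the theorem itself.

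The final monomialization paragraph is also under-argued. Arranging $\nu(z)=\sum a_i\nu(x_i)$ with $a_i\ge 0$ does not give $z/(x_1^{a_1}\cdots x_s^{a_s})\in R$: in the example above, $z=x^3+y^2$ has $\nu(z)=2\nu(y)$ yet $y^2$ does not divide $z$, so ``value $0$ hence a unit of $R$'' breaks down because the quotient need not lie in $R$. What is actually needed is the completion argument used in Section \ref{SecAbh} of this paper: expand $z$ in $\widehat R$, use that the ideal generated by its monomials is finitely generated, and apply Zariski's result on perturbations by monoidal transforms (as in Proposition \ref{PropZa}) to make the monomial of minimal value divide all the others, so that $z$ becomes a monomial times a unit. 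Likewise, the claim that $\VR_\nu$ is the directed union of the local rings of iterated monoidal transforms of $C$ along $\nu$ is not automatic in dimension $\ge 3$ (it fails for general valuations) and requires an argument of the same kind in this Abhyankar setting. These last points are fixable, but together with the gap above the proposal does not constitute a proof of the cited theorem.
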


\subsection{Abhyankar valuations on algebraic function fields}
In this subsection, suppose  that  $K$ is an algebraic function field over a field $k$ and $\nu$ is an Abhyankar valuation on $K$. Then $\Gamma_{\nu}$ is a finitely generated (torsion free) abelian group by \cite[Lemma 1]{Ab}. Let $n={\rm ratrk}(\nu)$, and
$$
0=\Gamma_{\nu,0}\subset \Gamma_{\nu,1}\subset \cdots \subset \Gamma_{\nu,r}=\Gamma_{\nu}
$$
be the chain of convex subgroups of $\Gamma_{\nu}$. Each quotient $\Gamma_{\nu_i}/\Gamma_{\nu,i+1}$ is a torsion free abelian group. 

\begin{Prop}\label{CorAbU} In the conclusions of Theorem \ref{TheoremAbU}, we can choose $R$ so that $R$ has a regular system of parameters $\{z_{i,j}\}$ such that for each fixed $i$ with $1\le i\le r$, we have that $\{\nu(z_{i,j})\}$ is a $\Z$-basis of $\Gamma_{\nu,i}/\Gamma_{\nu,i-1}$.
\end{Prop}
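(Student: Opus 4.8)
The plan is to refine the proof of Theorem \ref{TheoremAbU}: we want not merely a regular system of parameters but one adapted to the flag of convex subgroups of $\Gamma_\nu$. First I would recall that, since $\nu$ is Abhyankar, $\Gamma_\nu$ is a finitely generated torsion-free abelian group of rank $n={\rm ratrk}(\nu)$, and by the Abhyankar inequality equality forces $\dim R=n$ for the ring $R$ produced by Theorem \ref{TheoremAbU}; moreover each graded piece $\Gamma_{\nu,i}/\Gamma_{\nu,i-1}$ is free of some rank $n_i$ with $\sum_i n_i=n$. Fix, for each $i$, elements $\gamma_{i,1},\dots,\gamma_{i,n_i}\in\Gamma_{\nu,i}$ whose images form a $\Z$-basis of $\Gamma_{\nu,i}/\Gamma_{\nu,i-1}$; these $n$ values together form a $\Z$-basis of $\Gamma_\nu$ since the flag splits (successively) as a free module extension. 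For each such value pick $g_{i,j}\in K$ with $\nu(g_{i,j})=\gamma_{i,j}$, and let $Z=\{g_{i,j},g_{i,j}^{-1}\}\subset\VR_\nu\cup K$ — more precisely, take $Z$ to be the finite set of numerators and denominators needed so that Theorem \ref{TheoremAbU} gives a regular local ring $R$ with a regular system of parameters in which every $g_{i,j}$ is a unit times a monomial in the parameters.

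Next I would extract the combinatorial content. By Theorem \ref{TheoremAbU}, $R$ has a regular system of parameters $u_1,\dots,u_n$ with $g_{i,j}=(\text{unit})\cdot\prod_\ell u_\ell^{a^{(i,j)}_\ell}$; applying $\nu$, the matrix $A=(a^{(i,j)}_\ell)$ sends the tuple $(\nu(u_1),\dots,\nu(u_n))$ to the chosen basis $(\gamma_{i,j})$ of $\Gamma_\nu$. Since both families are $\Z$-bases of the rank-$n$ free group $\Gamma_\nu$, the matrix $A$ lies in $GL_n(\Z)$, hence is invertible over $\Z$. Therefore $\{\nu(u_1),\dots,\nu(u_n)\}$ is itself a $\Z$-basis of $\Gamma_\nu$, and more importantly we may perform the inverse unimodular substitution at the level of parameters: because $A^{-1}\in GL_n(\Z)$, the monomials $z_{i,j}:=\prod_\ell u_\ell^{b^{(i,j)}_\ell}$ (with $(b^{(i,j)}_\ell)=A^{-1}$) are again a regular system of parameters of $R$ — monomial changes of variable by a matrix in $GL_n(\Z)$ preserve the property of being a regular system of parameters in a regular local ring — and by construction $\nu(z_{i,j})=\gamma_{i,j}$.

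Finally I would check the flag-compatibility statement. For fixed $i$, the values $\nu(z_{i,1}),\dots,\nu(z_{i,n_i})=\gamma_{i,1},\dots,\gamma_{i,n_i}$ lie in $\Gamma_{\nu,i}$ by the choice of the $\gamma_{i,j}$, and their images in $\Gamma_{\nu,i}/\Gamma_{\nu,i-1}$ were selected to be a $\Z$-basis; that is exactly the assertion. One should also note that the new parameters still satisfy the monomial conclusion of Theorem \ref{TheoremAbU} for the original finite set $Z$ (and indeed for any prescribed finite set), since that conclusion is likewise stable under $GL_n(\Z)$ monomial substitutions. The one genuine obstacle is verifying that the $\gamma_{i,j}$ can be chosen so as to simultaneously form a $\Z$-basis of all of $\Gamma_\nu$ and respect the convex flag — this is where one uses that an extension of a free abelian group by a free abelian group splits, applied inductively along $0=\Gamma_{\nu,0}\subset\cdots\subset\Gamma_{\nu,r}=\Gamma_\nu$; everything else is the observation that $GL_n(\Z)$ acts on regular systems of parameters of a regular local ring through monomial substitutions without disturbing regularity.
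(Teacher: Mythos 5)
Your proposal starts the same way as the paper's proof: choose a $\Z$-basis $\{\gamma_{i,j}\}$ of $\Gamma_\nu$ adapted to the convex flag (using that each quotient $\Gamma_{\nu,i}/\Gamma_{\nu,i-1}$ is free, so the extensions split), realize these values by elements $g_{i,j}$, and feed those into Theorem~\ref{TheoremAbU} to get a regular local ring $R$ with a regular system of parameters $u_1,\dots,u_n$ and a monomial expression $g_{i,j} = (\text{unit})\cdot\prod_\ell u_\ell^{a^{(i,j)}_\ell}$. Up to there you and the paper coincide, and your deduction that $A=(a^{(i,j)}_\ell)\in GL_n(\Z)$ and that $\{\nu(u_\ell)\}$ is therefore a $\Z$-basis of $\Gamma_\nu$ is correct.

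The step that fails is the ``inverse unimodular substitution.'' It is not true that a monomial change of variables by an arbitrary matrix in $GL_n(\Z)$ sends a regular system of parameters to a regular system of parameters. First, $A^{-1}$ generally has negative entries, so the proposed $z_{i,j}=\prod_\ell u_\ell^{b^{(i,j)}_\ell}$ need not even lie in $R$. Second, even when all exponents are nonnegative, a nontrivial monomial with total degree $\ge 2$ lies in $\mathfrak m_R^2$, so it cannot be a minimal generator of $\mathfrak m_R$; the only monomial substitutions in $GL_n(\Z)$ that carry a regular system of parameters to one are permutations. (For instance, $\bigl(\begin{smallmatrix}1&1\\0&1\end{smallmatrix}\bigr)\in GL_2(\Z)$ sends the regular parameters $u_1,u_2$ of $k[u_1,u_2]_{(u_1,u_2)}$ to $u_1u_2,\,u_2$, which do not generate the maximal ideal.) So you cannot produce parameters with prescribed values this way, and the remainder of your argument does not go through.

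The paper avoids any change of parameters at all: it shows that the values $\nu(u_1),\dots,\nu(u_n)$ of the parameters produced by Theorem~\ref{TheoremAbU} are already a $\Z$-basis of $\Gamma_\nu$ adapted to the flag, after a mere \emph{reindexing}. The point you are missing, and which is the crux of the argument, is the interplay between nonnegativity and convexity: the exponents $a^{(i,j)}_\ell$ are all $\ge 0$ (since the $g_{i,j}$ are units times monomials in the parameters of $R$), and each $\nu(u_\ell)>0$. Consequently, if $\nu(g_{i,j})\in\Gamma_{\nu,i}$, convexity of $\Gamma_{\nu,i}$ forces every nonzero term $a^{(i,j)}_\ell\nu(u_\ell)$ to lie in $\Gamma_{\nu,i}$, hence $\nu(u_\ell)\in\Gamma_{\nu,i}$ whenever $a^{(i,j)}_\ell\ne 0$. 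A counting argument across the convex levels then shows that the set $J_i=\{\ell\mid\nu(u_\ell)\in\Gamma_{\nu,i}\setminus\Gamma_{\nu,i-1}\}$ has exactly $\dim(\Gamma_{\nu,i}/\Gamma_{\nu,i-1})$ elements and gives a basis of each quotient. This also explains why the paper requires the $\gamma_{i,j}$ to lie in $(\Gamma_\nu)_{\ge 0}$ (so the $f_{i,j}$ lie in $\VR_\nu$, as Theorem~\ref{TheoremAbU} requires, and so that the monomial exponents really are nonnegative) and why it reindexes the $f_{i,j}$ by increasing value. Your write-up omits both the nonnegativity requirement on the $\gamma_{i,j}$ and the convexity argument, which are precisely what make the conclusion attainable without the illegal change of variables.
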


\begin{proof}
There exists a $\Z$-basis $\{\gamma_{i,j}\}$ of $\Gamma_{\nu}$ such that $\gamma_{i,j}\in (\Gamma_{\nu})_{\ge 0}$ for all $i,j$ and for each fixed $i$ with $1\le i\le r$, the images of $\gamma_{i,j}$ in $\Gamma_{\nu,i}/\Gamma_{\nu,i-1}$ form a $\Z$-basis of $\Gamma_{\nu,i}/\Gamma_{\nu,i-1}$.

There exist $f_{i,j}\in \VR_\nu$ such that $\nu(f_{i,j})=\gamma_{i,j}$ for all $i,j$. Reindex the $f_{i,j}$ as $f_1,\ldots,f_n$ so that the $\nu(f_i)$ are increasing. By Theorem \ref{TheoremAbU}, there exists an algebraic regular local ring $R$ of $K$ such that $\dim R=n$ and there exists a regular system of parameters $z_1,\ldots,z_n$ in $R$ and units $\lambda_i\in R$ such that
$$
f_i=\lambda_iz_1^{a_1,i}\cdots z_n^{a_{n,i}}\mbox{ for }1\le i\le n.
$$
Since $\nu(f_1),\ldots,\nu(f_n)$ is a $\Z$-basis of $\Gamma_{\nu}$, we have that $\nu(z_1),\ldots,\nu(z_n)$ is a $\Z$-basis of $\Gamma_{\nu}$. Further, we can  reindex the $z_i$ as $z_{i,j}$ so that for all fixed $i$ with $1\le i\le r$, $\{\nu(z_{i,j})\}$ is a $\Z$-basis of $\Gamma_{\nu,i}/\Gamma_{\nu,i-1}$.
\end{proof}

Suppose that $R$ satisfies the conclusions of Proposition \ref{CorAbU}. Index the regular system of parameters $z_{i,j}$ as $x_1,\ldots,x_n$ so that $\nu(x_i)<\nu(x_j)$ if $i<j$.

We define a primitive monoidal transform (PMT) $R\rightarrow R_1$ along $\nu$ by
$R_1=R\left[\frac{x_j}{x_i}\right]_\nu$. We have that $R_1$ is a regular local ring with regular parameters $x_1(1),\ldots,x_n(1)$ defined by
$$
x_k=x_k(1)\mbox{ if $k\ne j$ and $x_j=x_j(1)x_i(1)$.}
$$
Further, $\{\nu(x_k(1))\mid 1\le k\le n\}$ is a $\Z$-basis of $\Gamma_{\nu}$, which satisfies the conclusions of Proposition \ref{CorAbU} in $R_1$.

We will find the following proposition useful.

\begin{Prop}\label{PropZa} Suppose that $R$ satisfies the conclusions of Proposition
 \ref{CorAbU}, with regular parameters $x_1,\ldots,x_n$  and $M_1=x_1^{a_1}\cdots x_n^{a_n}$, $M_2=x_1^{b_1}\cdots x_n^{b_n}$ are monomials such that $\nu(M_1)<\nu(M_2)$. Then there exists a sequence of PMTs along $\nu$
 $$
R\rightarrow R_1\rightarrow \cdots\rightarrow R_s
$$
such that $M_1$ divides $M_s$ in $R_s$.
\end{Prop}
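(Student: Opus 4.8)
Set $v_i=\nu(x_i)\in\Gamma_\nu$ and $c_i=b_i-a_i\in\Z$, so that $M_2/M_1=\prod_i x_i^{c_i}$ is a Laurent monomial with $\sum_i c_iv_i=\nu(M_2)-\nu(M_1)>0$. Since the $v_i$ form a $\Z$-basis of $\Gamma_\nu$ (Proposition \ref{CorAbU}), a Laurent monomial in a regular system of parameters belongs to the ring iff all its exponents are $\ge 0$; hence $M_1$ divides $M_2$ in $R_s$ exactly when, after rewriting in the regular parameters of $R_s$, the exponent vector of $M_2/M_1$ is nonnegative. A PMT $R'\to R'[x_j/x_i]_\nu$ is legal precisely when $v_i<v_j$, and under it $c_i\mapsto c_i+c_j$, $v_j\mapsto v_j-v_i$, all other coordinates unchanged; re-indexing so that the $v_i$ increase costs nothing, and $\sum_i c_iv_i$ is a PMT-invariant (being intrinsic), while the $v_i$ retain the structure of Proposition \ref{CorAbU}. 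So the proposition becomes the purely combinatorial assertion: starting from $(c,v)$ with $\sum c_iv_i>0$, one can reach all $c_i\ge 0$ by legal PMTs.

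\textbf{Reduction to rank one.} I would induct on the number $r$ of convex subgroups $0=\Gamma_{\nu,0}\subset\cdots\subset\Gamma_{\nu,r}=\Gamma_\nu$. For $r>1$, let $x_1,\dots,x_m$ be the variables whose values lie in the first convex subgroup $\Gamma_{\nu,1}$ (so the $v_1,\dots,v_m$ are infinitesimally smaller than all other $v_j$). Passing to $\Gamma_\nu/\Gamma_{\nu,1}$, the images of $v_{m+1},\dots,v_n$ form a $\Z$-basis with $r-1$ blocks, PMTs among $x_{m+1},\dots,x_n$ descend to legal PMTs of the quotient, and $\sum_{j>m}c_j\overline{v_j}$ must be $\ge 0$ (otherwise it would be ``infinitely negative'' and force $\sum_i c_iv_i<0$). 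If it is $>0$, the inductive hypothesis makes $c_j\ge 0$ for all $j>m$; if it is $0$, then already $c_j=0$ for all $j>m$ and the problem reduces to the rank-one problem for $x_1,\dots,x_m$. In the former case, fix any $j>m$ with $c_j>0$; for each $i\le m$ with $c_i<0$ apply ``divide $x_j$ by $x_i$'' repeatedly (always legal since $v_i\in\Gamma_{\nu,1}<v_j$): this adds $c_j$ to $c_i$ and changes no other coordinate, so after finitely many steps $c_i\ge 0$; doing this for each such $i$ finishes the case $r>1$.

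\textbf{The rank-one case.} Here $\Gamma_\nu$ embeds in $\R$ and $v_1,\dots,v_n$ are $\Q$-linearly independent positive reals. I would run the greedy Euclidean/Perron procedure: at each stage perform the legal PMT that subtracts the smallest current value from the next-smallest, re-indexing afterwards. The invariant $\sum c_iv_i>0$ together with positivity of the value vector shows that $c$ can never become $\le 0$ with some strictly negative coordinate; and the convergence of the continued-fraction expansion (for $n=2$) resp. of Perron's algorithm (for $n\ge 3$) forces the exponent vector eventually to have all coordinates of the same sign — which, by the previous remark, must be the nonnegative one. (For $n\le 3$ one can instead check by hand that some single PMT strictly decreases $\sum_i|c_i|$, using $\sum c_iv_i>0$; this shortcut fails for $n\ge 4$.) The main obstacle is exactly this convergence step for $n\ge 3$: showing that iterating the subtractive algorithm makes $c$ sign-consistent. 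This is where $\Q$-independence of the $v_i$ is essential — it prevents the algorithm from stalling on a rational relation — and it is the point requiring real care; the rest is bookkeeping.
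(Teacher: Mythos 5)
Your structural reduction — translate into the exponent vector $c=b-a$ with $\sum c_i\nu(x_i)>0$, then induct on the chain of convex subgroups and, after peeling off the top part, use a strictly positive margin to clear the negative exponents in the bottom block — is sound, and it mirrors the paper's argument except that the paper processes the blocks from the other end (it locates the \emph{largest} index $l$ where the two monomials disagree, applies the rank-one result to that single block, and then uses the strict excess $b_{l,1}(s)-a_{l,1}(s)>0$ to repair the smaller blocks via PMTs dividing $z_{l,1}$ by $z_{i,j}$, $i<l$). That organizational difference is harmless; both inductions funnel everything into the same rank-one base case.

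The gap is in that base case. Your proposal correctly identifies that one needs: for a rank-one valuation with $\Q$-linearly independent values $v_1,\ldots,v_m$ on the parameters and any $c\in\Z^m$ with $\sum c_iv_i>0$, some sequence of subtractive (Perron-type) PMTs makes all $c_i\ge 0$. But you only assert this, invoking ``convergence of Perron's algorithm for $n\ge 3$,'' and you yourself flag it as ``the point requiring real care.'' That caution is warranted: multidimensional subtractive continued-fraction algorithms (Jacobi--Perron and relatives) are notoriously delicate, and the naive greedy step does not obviously force the $c$-vector into the nonnegative orthant. This is precisely the nontrivial input that the paper does not re-prove but instead cites as \cite[Theorem 2]{Z} — Zariski's combinatorial monomial-comparability lemma from his 1940 local uniformization paper. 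Without either reproducing Zariski's argument or giving an independent proof that the iterated PMTs achieve sign-consistency, the proposal is incomplete at exactly the step that carries the real content. If you want to keep the argument self-contained, you should prove that lemma rather than gesture at Perron; otherwise, cite Zariski as the paper does.
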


\begin{proof} Consider the indexing $z_{i,j}$ of the regular parameters $x_i$ of the statement of Proposition \ref{CorAbU}. Write
$$
M_1=\prod_{i,j}z_{i,j}^{a_{i,j}},\,\, M_2=\prod_{i,j}z_{i,j}^{b_{i,j}}.
$$
There exists a largest index $l$ such that $\prod_jz_{l,j}^{a_{l,j}}\ne \prod_jz_{l,j}^{b_{l,j}}$.
Then $\nu(\prod_jz_{l,j}^{a_{l.j}})<\nu(\prod_jz_{l,j}^{b_{l,j}})$.  By \cite[Theorem 2]{Z}, there exists a sequence of PMTs  $R\rightarrow R_s$ along $\nu$ in the  variables  $z_{l,j}(m)$  from the regular parameters  of $R_m$ as $j$ varies, 
such that $\prod_jz_{l,j}^{a_{l.j}}$ divides $\prod_jz_{l,j}^{b_{l,j}}$ in $R_s$. Writing $M_1$ and $M_2$ in the regular parameters $z_{i,j}(s)$ of $R_s$ as
$$
M_1=\prod z_{i,j}(s)^{a_{i,j}(s)}\mbox{ and }M_2=\prod z_{i,j}(s)^{b_{i,j}(s)},
$$
we have that 
$$
M_2=\left(\prod_{i<l,j}z_{i,j}(s)^{b_{i,j}(s)}\right)\left(\prod_jz_{l,j}(s)^{b_{l,j}(s)-a_{l,j}(s)}\right)
\left(\prod_{i>l,j}z_{l,j}(s)^{a_{i,j}(s)}\right)
$$
with $b_{l,j}(s)-a_{l,j}(s)\ge 0$ for all $j$ and for some $j$, $b_{l,j}(s)-a_{l,j}(s)> 0$.
Without loss of generality, this occurs for $j=1$.

Now perform a  sequence of PMTs $R_s\rightarrow R_m$ along $\nu$ defined by
$z_{l,1}(t+1)=z_{l,1}(t)z_{i,j}(t)$ for $i<l$ and $j$ such that $b_{i,j}(t)>a_{i,j}(t)$ where 
$$
M_1=\prod z_{i,j}(t)^{a_{i,j}(t)}\mbox{ and }M_2=\prod z_{i,j}(t)^{b_{i,j}(t)}
$$
to achieve that $M_1$ divides $M_2$ in $R_m$.
\end{proof}

\subsection{Abhyankar valuations  in finite extensions}
We continue the notation of the previous section, and further suppose that $L$ is a finite extension field of $K$ and $\omega$ is an extension of $\nu$ to $L$, such that $\VR_\omega/\mathfrak m_\omega$ is separable over $k$. We have that $\omega$ is also an Abhyankar valuation and $d(\omega|\nu)=1$ by \cite[Theorem 1]{KK2}.

We suppose that $\epsilon(\omega|\nu)=e(\omega|\nu)$.  

\begin{Prop}\label{PropGoodForm} There exist algebraic regular local rings $R$ of $K$ and $S$ of $L$ which are dominated by $\omega$ and $\nu$ respectively such that $S$ dominates $R$ and $R$ has regular parameters $x_1,\ldots,x_n$ and $S$ has regular parameters $y_1,\ldots,y_n$ satisfying the conclusions of Proposition \ref{CorAbU} such that there is an expression
$$
x_1=\gamma y_1^e\mbox{ and }
x_i=y_i\mbox{ for }2\le i\le n
$$
where $\gamma$ is a unit in $S$.
\end{Prop}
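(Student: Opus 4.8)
The plan is to adapt the strategy of Section~\ref{Sectproofmanotehm}: first pin down the arithmetic of the extension $\Gamma_\nu\subseteq\Gamma_\omega$, then produce a commutative square of regular local rings whose regular systems of parameters are related by a monomial substitution, and finally normalize that substitution by a finite sequence of primitive monoidal transforms carried out compatibly on both sides.

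\emph{The arithmetic of the value groups.} Since $e:=e(\omega|\nu)=(\Gamma_\omega:\Gamma_\nu)$ is finite we have $\Gamma_\omega\otimes\Q=\Gamma_\nu\otimes\Q$, so ${\rm ratrk}(\omega)={\rm ratrk}(\nu)=n$ (recall that $\omega$ is then also Abhyankar and $d(\omega|\nu)=1$), and $\Gamma_\nu,\Gamma_\omega$ have the same chain of convex subgroups, with $\Gamma_{\nu,i}=\Gamma_\nu\cap\Gamma_{\omega,i}$. If $e=1$ then $\Gamma_\omega=\Gamma_\nu$; if $e>1$, Proposition~\ref{PropGS} gives $\Gamma_{\omega,1}\cong\Z$, $\Gamma_{\nu,1}=e\,\Gamma_{\omega,1}$ and $\Gamma_{\omega,i}/\Gamma_{\omega,i-1}=\Gamma_{\nu,i}/\Gamma_{\nu,i-1}$ for $i\ge2$, so that there is exactly one layer-$1$ parameter on each side. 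In all cases the target identities $x_1=\gamma y_1^e$, $x_i=y_i$ ($i\ge2$) are consistent with both parameter systems satisfying the conclusions of Proposition~\ref{CorAbU}. Note also that the residue field of $\nu$, being a subextension of the separable extension $\mathcal{O}_\omega/\mathfrak m_\omega$ of $k$, is itself separable over $k$, so Proposition~\ref{CorAbU} applies on the $K$-side as well.

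\emph{The initial square.} Next I would apply Proposition~\ref{CorAbU} to $(K,\nu)$ to get an algebraic regular local ring $R_0$ of $K$ dominated by $\nu$, of dimension $n$, with regular parameters $\{x_{i,j}(0)\}$ whose $\nu$-values form layerwise $\Z$-bases of $\Gamma_{\nu,i}/\Gamma_{\nu,i-1}$; write $R_0=C_{\mathfrak p}$ with $C\subseteq\mathcal{O}_\nu$ a finitely generated $k$-algebra and $\mathfrak p=\mathfrak m_\nu\cap C$. Now apply Theorem~\ref{TheoremAbU} to $(L,\omega)$ with $Z\subseteq\mathcal{O}_\omega$ a finite set containing the $x_{i,j}(0)$ and a generating set of $C$: this produces an algebraic regular local ring $S_0$ of $L$ dominated by $\omega$, of dimension $n$, with regular parameters $\{y_{i,j}(0)\}$ satisfying the conclusions of Proposition~\ref{CorAbU} and such that each element of $Z$ is a unit of $S_0$ times a monomial in the $y_{k,l}(0)$. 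Since each generator of $C$ lies in $S_0$ we have $C\subseteq S_0$; as the elements of $C\setminus\mathfrak p$ have $\omega$-value $0$ they are units of $S_0$, whence $R_0\subseteq S_0$, and $\mathfrak m_{S_0}\cap R_0=\mathfrak m_{R_0}$, so $S_0$ dominates $R_0$. Writing $x_{i,j}(0)=u_{i,j}\prod_{k,l}y_{k,l}(0)^{c_{(i,j)(k,l)}}$ with $u_{i,j}$ a unit and comparing $\omega$-values layer by layer shows that the exponent matrix $(c_{(i,j)(k,l)})$ is block lower triangular for the layer filtration, that its layer-$1$ diagonal block is $(e)$ (when $e>1$), and that for $i\ge2$ its $i$-th diagonal block is an integral matrix of determinant $\pm1$, being a change of $\Z$-basis of $\Gamma_{\omega,i}/\Gamma_{\omega,i-1}=\Gamma_{\nu,i}/\Gamma_{\nu,i-1}$.

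\emph{Normalization, and the main difficulty.} Finally I would bring the diagonal blocks for $i\ge2$ to the identity by primitive monoidal transforms within each layer, performed in parallel on $R_0$ and $S_0$; this is the content of \cite[Theorem~2]{Z} already used in Proposition~\ref{PropZa}. One is then reduced to $x_{i,j}=u_{i,j}\,M_{i,j}\,y_{i,j}$ with $M_{i,j}$ a monomial in strictly-lower-layer $y$'s and $\omega(M_{i,j})\in\Gamma_{\omega,i-1}$; these factors are removed by first invoking Proposition~\ref{PropZa} to force the required divisibilities and then, once $\nu(x_{i,j})=\omega(y_{i,j})$, replacing $y_{i,j}$ by a lifting to $S$ of the minimal polynomial of the residue of $x_{i,j}$ — exactly the device used in the case ${\rm rk}(\nu)=1$ of the proof of Theorem~\ref{Teocutko} — which simultaneously turns $x_{i,j}$ into a regular parameter on the $R$-side. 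After finitely many such steps one reaches algebraic regular local rings $R$ (dominated by $\nu$) and $S$ (dominated by $\omega$) with $S$ dominating $R$ and with the asserted form. I expect the main obstacle to be precisely this last reduction: the transforms on the two sides must be arranged so that at each stage $R_m$ remains regular with the current $x$'s as a regular system of parameters while the corresponding $y$'s are modified; the residue-field bookkeeping must stay consistent, which is where $d(\omega|\nu)=1$ and the identity $[S_m/\mathfrak m_{S_m}:R_m/\mathfrak m_{R_m}]=f(\omega|\nu)$ enter, as in Section~\ref{Sectproofmanotehm}; and termination must be checked. Replacing the two-dimensional resolution input of \cite{Ramif} used for Theorem~\ref{Teocutko} by Proposition~\ref{PropZa}, i.e.\ by Zariski's structure theorem for valuations of function fields, is the principal new ingredient here.
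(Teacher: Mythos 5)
Your opening two steps match the paper: after establishing that $\omega$ is Abhyankar, $d(\omega|\nu)=1$, and that (for $e>1$) $\Gamma_{\omega,1}\cong\Z$ with $\Gamma_\omega/\Gamma_\nu\cong\Gamma_{\omega,1}/\Gamma_{\nu,1}$ by Proposition~\ref{PropGS}, you build a dominating square $R_0\subset S_0$ of algebraic regular local rings satisfying Proposition~\ref{CorAbU} in which the $x_i$ are units times monomials in the $y_j$ (your observation that $Z$ should contain the generators of an affine model of $R_0$ is a useful detail the paper leaves implicit). The divergence, and the gap, is in the normalization. The paper's argument is short and purely linear-algebraic: from Proposition~\ref{PropGS} it reads off at once that the first row of the exponent matrix $A$ is $(e,0,\dots,0)$ and hence (from $|\mathrm{Det}\,A|=e$) that the complementary block $\overline A$ is unimodular; it then clears the first column of $A$ by a matched pair of PMTs on $S_0$ and $R_0$ using integer vectors chosen so the new $y_1$-exponents become multiples of $e$, absorbs the residual units into the $y_i$ via $\overline A^{-1}$, and finally absorbs $\overline A$ itself by a single monomial birational transform on the $R$-side (which is permissible precisely because $\overline A$ is unimodular). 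Neither Zariski's \cite[Theorem~2]{Z}, nor Proposition~\ref{PropZa}, nor minimal polynomials appear.

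Your proposed normalization does not go through as stated. Diagonalizing each layer-$i$ block ($i\ge2$) of $A$ by ``PMTs within each layer, performed in parallel on $R_0$ and $S_0$'' is not what \cite[Theorem~2]{Z} gives you: that result forces divisibility of one monomial by another after a sequence of PMTs, which is not the same as reducing a unimodular block to the identity; moreover a generic unimodular matrix is not a product of monoidal transvections respecting the ordering. Even granting the reduction to $x_{i,j}=u_{i,j}M_{i,j}y_{i,j}$, the proposed removal of $M_{i,j}$ is the wrong tool: passing to a lift of the minimal polynomial of the residue of $x_{i,j}$ is the device from the two-dimensional proof for handling a residue-field jump at a quadratic transform, and it neither kills a nontrivial monomial factor $M_{i,j}$ nor restores the Proposition~\ref{CorAbU} normal form for the new parameter. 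You flag this step as ``the main obstacle'' without resolving it, and indeed it is the whole content of the proposition. The correct move, as in the paper, is to push the unimodular block onto the $R$-side by a monomial birational transform $x_i(1)=\prod_j x_j(2)^{a_{i,j}}$, which changes the $x$'s rather than the $y$'s; once $\overline A$ is absorbed there, the identities $x_i=y_i$ for $i\ge2$ hold with no further PMTs and no residue-field bookkeeping.
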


\begin{proof} By  Theorem \ref{TheoremAbU} and Proposition \ref{CorAbU} there exist algebraic regular local rings $R_0$ of $K$ and $S_0$ of $L$ such that 
 $\omega$ dominates $S_0$, $S_0$ dominates $R_0$, $R_0$ has  regular parameters $x_1,\ldots,x_n$ and $S_0$ has regular parameters $y_1,\ldots,y_n$ satisfying the conclusions of Proposition \ref{CorAbU} and there exist units $\gamma_i\in S$ such that 
 \begin{equation}\label{eqAb1} 
 x_i=\gamma_iy_1^{a_{i,1}}\cdots y_n^{a_{i,n}}
 \end{equation}
 for $1\le i\le n$. Since $\nu(x_1),\ldots,\nu(x_n)$ is a $\Z$-basis of $\Gamma_{\nu}$ and $\omega(y_1),\ldots,\omega(y_n)$ is a $\Z$-basis of $\Gamma_{\omega}$, we have that 
 $$
 e=|{\rm Det}(A)|
 $$
 where $A=(a_{ij})$ is the $n\times n$ matrix of exponents in (\ref{eqAb1}).
 
 First suppose that  $\epsilon(\omega|\nu)=e(\omega|\nu)>1$. Then $\Gamma_{\omega,1}\cong \Z$ and $\Gamma_{\omega,1}/\Gamma_{\nu,1}\cong \Z_e$ by Proposition \ref{PropGS}.
 Thus $\nu(x_1)$ is a $\Z$-basis of $\Gamma_{\nu,1}$ and $\nu(y_1)$ is a $\Z$-basis of $\Gamma_{\omega,1}$. We thus have that $a_{1,j}=0$ for $j>1$ and $a_{1,1}$ is a positive multiple of $e$. Thus from $|{\rm Det}(A)|=e$ we have that $a_{1,1}=e$ and $|{\rm Det}(\overline A)|=1$ where 
 $$
 \overline A=
 \left(\begin{array}{lll}
 a_{2,2}&\cdots&a_{2,n}\\
 &\vdots&\\
 a_{n,2}&\cdots&a_{n,n}
 \end{array}\right).
 $$
 Since ${\rm Det}(\overline A)=\pm 1$, there exist $r_2,\ldots,r_n\in \Z$ such that 
 $$
 \overline A\left(\begin{array}{c}r_2\\ \vdots\\ r_n\end{array}\right)=-\left(\begin{array}{c}
 a_{2,1}\\ \vdots \\a_{n,1}\end{array}\right).
 $$
 Define $s_2,\ldots,s_n\in \Z_{\ge 0}$ by
 $$
 \left(\begin{array}{c} s_2\\ \vdots\\ s_n\end{array}\right)=\overline A
 \left(\begin{array}{c} 1\\ \vdots\\ 1\end{array}\right).
 $$
 There exists $r\in \Z_{>0}$ such that $r_i+te>0$ for all $i$. Perform the sequence of PMTs $S_0\rightarrow S_1$ along $\omega$ defined by
 $$
 y_1= y_1(1), y_i=y_i(1)y_1(1)^{r_i+te}\mbox{ for }2\le i\le n.
 $$
 We have that $S_1=S_0[y_1(1),\ldots,y_n(1)]_{\omega}$ is a regular local ring with regular parameters $y_1(1),\ldots,y_n(1)$ which dominates $R_0$ and there exist units $\gamma_i'\in S_1$ such that 
 $$
 x_1=\gamma_1'y_1(1)^e\mbox{ and }x_i=\gamma_i'y_1(1)^{ets_i}y_2(1)^{a_{2,1}}\cdots y_n(1)^{a_{i,n}}\mbox{ for }2\le i\le n.
 $$
 Now perform the sequence of PMTs $R_0\rightarrow R_1$ along $\nu$ defined by
 $$
 x_1=x_1(1), x_i=x_1(1)^{ts_i}x_i(1)\mbox{ for }2\le i\le n.
 $$
 We have that $S_1$ dominates $R_1$ and there exist units $\gamma_i(1)\in S_1$ such that
 \begin{equation}\label{eqAb2}
 x_1(1)=\gamma_1(1)y_1(1)^e, x_i(1)=\gamma_i(1)y_2^{a_{i,2}}\cdots y_n(1)^{a_{i,n}}\mbox{ for }2\le i\le n.
 \end{equation}
 We continue to have ${\rm Det}(\overline A)=\pm 1$.  Let $B=\overline A^{-1}$, Write
 $$
 B=\left(\begin{array}{lll}
 b_{2,2}&\cdots&b_{2,n}\\
 &\vdots&\\
 b_{n,2}&\cdots&b_{n,n}
 \end{array}
 \right)
 $$
 with $b_{i,j}\in \Z$. We now replace the $y_i(1)$ with the product of the unit
 $\gamma_2(1)^{-b_{i,2}}\cdots \gamma_n(1)^{-b_{i,n}}$ and $y_i(1)$ for $2\le i\le n$ to get $\gamma_i(1)=1$ for $2\le i\le n$ in (\ref{eqAb2}).
 
Now define a birational transformation $R_1\rightarrow R_2$ along $\nu$ by $R_2=R_1[x_1(2),\ldots,x_n(2)]_{\nu}$ where
$$
x_n(1)=x_n(2)\mbox{ and }x_i(1)=x_2(2)^{a_{i,2}}\cdots x_n(2)^{a_{i,n}}\mbox{ for }2\le i\le n.
$$ 
The ring $R_2$ is a regular local ring with regular parameters $x_1(2),\ldots,x_n(2)$. We have that $R_2$ is dominated by $S_1$, and
$$
x_1(2)=\gamma y_1(2)^e\mbox{ and }x_i(2) = y_i(1)\mbox{ for }2\le i\le n
$$
where $\gamma$ is a unit in $S_1$, giving the conclusions of the proposition.

Now suppose that $e=1$. This case is much simpler.  In (\ref{eqAb1}) we then have that ${\rm Det}(A)=\pm 1$. Taking $B=A^{-1}=(b_{i,j})$, we can then make the change of variables in $S_0$ 
replacing the $y_i$ with the product of the unit
 $\gamma_1^{-b_{i,1}}\cdots \gamma_n^{-b_{i,n}}$ times $y_i$ for $1\le i\le n$ to get $\gamma_i=1$ for $1\le i\le n$ in (\ref{eqAb1}).
 
 Now define a birational transformation $R_0\rightarrow R_1$ along $\nu$ by $R_1=R_0[x_1(1),\ldots,x_n(1)]_{\nu}$ where
$$
x_i=x_1(1)^{a_{i,1}}\cdots x_n(1)^{a_{i,n}}\mbox{ for }1\le i\le n.
$$ 
The ring $R_1$ is a regular local ring with regular parameters $x_1(1),\ldots,x_n(1)$. We have that $R_1$ is dominated by $S$, and
$$
x_i(1) = y_i(1)\mbox{ for }1\le i\le n,
$$
giving the  conclusions of the proposition.
\end{proof}

\begin{Prop}\label{PropSR} Suppose that $R\rightarrow S$ has the form of the conclusions of Proposition \ref{PropGoodForm}   and $1\le i<j\le n$ (with $\nu(x_i)<\nu(x_j)$). Then there exist $z_1,\ldots,z_m\in \VR_\omega$ such that $S_0=R_0[z_1,\ldots,z_m]_\omega$.

Let $R_0\rightarrow R_1=R_0[x_1(1),\ldots,x_n(1)]_\nu$ be the PMT along $\nu$ defined by 
$$
x_k=x_k(1)\mbox{ if }i\ne j\mbox{ and }x_j=x_j(1)x_i(1).
$$
Then there exists a sequence of PMTs along $\omega$, $S_0\rightarrow S_1$ such that $S_1$ dominates $R_1$ and  $S_1$ has regular parameters $y_1(1),\ldots,y_n(1)$ such that
$$
x_1(1)=\gamma y_1(1)^e\mbox{ and }x_i(1)=y_i(1)\mbox{ for }2\le i\le n
$$
and $S_1=R_1[z_1,\ldots,z_m]_{\omega}$.
\end{Prop}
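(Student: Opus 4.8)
The plan is to prove the two assertions of the proposition separately, the first being formal and the second a bookkeeping of primitive monoidal transforms organized by cases. For the identity $S_0=R_0[z_1,\ldots,z_m]_\omega$, I would use that $S_0$, being an algebraic local ring of $L$, is a localization of some $k[z_1,\ldots,z_m]\subseteq S_0$; since $k\subseteq R_0\subseteq S_0$, the ring $S_0$ is then a localization of the finitely generated $R_0$-algebra $R_0[z_1,\ldots,z_m]$, and being local it equals this algebra localized at a prime, which must be $\MI_\omega\cap R_0[z_1,\ldots,z_m]$ because $\omega$ dominates $S_0$. Thus $S_0=R_0[z_1,\ldots,z_m]_\omega$ with all $z_i\in S_0\subseteq\VR_\omega$.

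For the second assertion I would distinguish $i\ge 2$ and $i=1$. If $i\ge 2$ then $x_i=y_i$ and $x_j=y_j$ by Proposition \ref{PropGoodForm}, so $\omega(y_j)=\nu(x_j)>\nu(x_i)=\omega(y_i)$, and I would take the single PMT $S_1=S_0[y_j/y_i]_\omega$ along $\omega$, with regular parameters $y_j(1)=y_j/y_i$ and $y_k(1)=y_k$ for $k\ne j$; since $y_j/y_i=x_j/x_i=x_j(1)$, the relations $x_1(1)=\gamma y_1(1)^e$ and $x_k(1)=y_k(1)$ ($2\le k\le n$) are immediate. If $i=1$ then $x_1=\gamma y_1^e$ and $x_j=y_j$ with $j\ge 2$, and from $\omega(y_j)=\nu(x_j)>\nu(x_1)=e\,\omega(y_1)$ I would perform $e$ successive PMTs along $\omega$, each dividing the current transform of $y_j$ by $y_1$; every step is a genuine PMT because $\omega(y_j)>t\,\omega(y_1)$ for $1\le t\le e$, and the resulting $S_1$ has regular parameters $y_1$, $y_j/y_1^e$ and $y_k$ ($k\ne 1,j$). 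Replacing $y_j/y_1^e$ by $y_j(1):=\gamma^{-1}(y_j/y_1^e)$, which is again a regular parameter as $\gamma\in S_0^\times\subseteq S_1^\times$, one obtains $x_j(1)=x_j/x_1=y_j/(\gamma y_1^e)=y_j(1)$ together with $x_1(1)=x_1=\gamma y_1(1)^e$ and $x_k(1)=y_k(1)$ for $2\le k\le n$. In both cases the new parameters still satisfy the conclusions of Proposition \ref{CorAbU}, this being preserved under PMTs (and unaffected by the rescaling by the unit $\gamma$).

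It remains to verify that $S_1$ dominates $R_1$ and, crucially, that $S_1=R_1[z_1,\ldots,z_m]_\omega$ with the same generators. Domination is automatic: $R_1\subseteq S_1$ because $x_j(1)\in S_1$ (using $\gamma\in S_1^\times$ when $i=1$), and $\MI_{S_1}\cap R_1=\MI_\omega\cap R_1=\MI_\nu\cap R_1=\MI_{R_1}$ since $\MI_\omega\cap K=\MI_\nu$. For the second point, the key observation is that in both cases $S_1=S_0[x_j(1)]_\omega$: this is clear when $i\ge 2$, and when $i=1$ it follows from $S_1=S_0[y_j/y_1^e]_\omega$ together with $y_j/y_1^e=\gamma\,x_j(1)$, $\gamma\in S_0^\times$. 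Combining $S_1=S_0[x_j(1)]_\omega$ with $S_0=R_0[z_1,\ldots,z_m]_\omega$ and $R_1=R_0[x_j(1)]_\nu$, and noting that both $R_1[z_1,\ldots,z_m]_\omega$ and $S_0[x_j(1)]_\omega$ equal the localization of $R_0[z_1,\ldots,z_m,x_j(1)]$ at $\MI_\omega$ — the intermediate localizations only invert elements of $R_0[x_j(1)]\subseteq\VR_\nu$ of $\nu$-value, hence $\omega$-value, zero — I would conclude $S_1=R_1[z_1,\ldots,z_m]_\omega$.

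I expect the main obstacle to be the case $i=1$: producing exactly the $e$-th power on the $S$-side to match $x_1=\gamma y_1^e$, checking that all the intermediate transforms are legitimate PMTs along $\omega$ (this is precisely where the hypothesis $\epsilon(\omega|\nu)=e(\omega|\nu)$ is used, through $\omega(y_j)=\nu(x_j)>\nu(x_1)=e\,\omega(y_1)$), and absorbing the unit $\gamma$ into the last parameter so that the ``good form'' of Proposition \ref{PropGoodForm} is reproduced verbatim. The identity $S_1=R_1[z_1,\ldots,z_m]_\omega$ with unchanged generators, which is what will make this proposition usable for building the infinite tower in the proof of Theorem \ref{TeoAbh}, then reduces to the elementary fact $S_1=S_0[x_j(1)]_\omega$; everything else is formal.
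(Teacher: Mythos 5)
Your proof is correct and follows essentially the same route as the paper: the same justification for $S_0=R_0[z_1,\ldots,z_m]_\omega$ (that $S_0$ is essentially of finite type over $k\subseteq R_0$), and the same case split between $i\ge 2$ (a single PMT $y_j\mapsto y_j/y_i$) and $i=1$ (a chain of $e$ PMTs dividing $y_j$ by $y_1$). The additional care you take in rescaling $y_j(1)$ by $\gamma^{-1}$ so that $x_j(1)=y_j(1)$ holds exactly, and in explicitly identifying $S_1=S_0[x_j(1)]_\omega$ and $R_1[z_1,\ldots,z_m]_\omega$ as the same localization of $R_0[z_1,\ldots,z_m,x_j(1)]$, supplies detail that the published proof leaves implicit.
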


\begin{proof} The expression $S_0=R_0[z_1,\ldots,z_m]_\omega$ follows since $S_0$ is essentially of finite type over $k$.

If $i\ne 1$ we have that both $i=1$ and $e=1$, then $S_1$ is defined by  
$$
y_k=y_k(1)\mbox{ if }i\ne j\mbox{ and }y_j=y_j(1)y_i(1).
$$
If $i=1$ and $e>1$, then define $S\rightarrow S_1$  by the sequence of PMTs along $\omega$ 
$$
y_k=y_k(1)\mbox{ if }k\ne j\mbox{ and }y_j=y_1(1)^ey_j(1).
$$
\end{proof}

We now prove Theorem \ref{TeoAbh}. Let $R_0\rightarrow S_0$ have the form of the conclusions of Proposition \ref{PropGoodForm}, and write $S_0=R_0[z_1,\ldots,z_m]_{\omega}$.

Suppose that $f\in \VR_\omega$. Write $f=\frac{g}{h}$ with $g,h\in S_0$. Let $\kappa$ be a coefficient field of the $\mathfrak m_{S_0}$-adic completion $\widehat{S_0}$ of $S_0$. We then have that $\widehat S_0$ is the power series ring $\widehat{S_0}=\kappa[[y_1,\ldots,y_n]]$. Expand $g=\sum \alpha_{i_1,\ldots,i_n}y_1^{i_1}\cdots y_n^{i_n}$ and $h=\sum \beta_{j_1,\ldots,j_n}y_1^{j_1}\cdots y_n^{j_n}$ with $\alpha_{i_1,\ldots,i_n},\beta_{j_1,\ldots,j_n}\in \kappa$. We have that
$$
\omega(g)=\min\{\omega(y_1^{i_1}\cdots y_n^{i_n})\mid \alpha_{i_1,\ldots,i_n}\ne 0\}
$$
and
$$
\omega(h)=\min\{\omega(y_1^{j_1}\cdots y_n^{j_n})\mid \beta_{j_1,\ldots,j_n}\ne 0\}.
$$  
Let $U$ be the ideal $U=(y_1^{i_1}\cdots y_n^{i_n}\mid \alpha_{i_1,\ldots,i_n}\ne 0)$ in $\widehat{S_0}$ and $V$ be the ideal $V=(y_1^{j_1}\cdots y_n^{j_n}\mid \beta_{j_1,\ldots,j_n}\ne 0)$ in $\widehat{S_0}$. Since $\widehat{S_0}$ is a noetherian ring, there exist monomials $M_1,\ldots,M_s, N_1,\ldots,N_t$  in $y_1,\ldots,y_n$ such that $U=(M_1,\ldots,M_s)$ and $V=(N_1,\ldots,N_t)$.  We can further assume that $\omega(M_1)<\omega(M_i)$ for $i>1$ and $\omega(N_1)<\omega(N_j)$ for $j>1$. Since $\omega(f)\ge 0$ we have that $\omega(N_1)\le \omega(M_1)$. There exist units $\alpha_i$ and $\beta_i$ in $S_0$ such that  $\overline M_1=\alpha_1M_1^e,\ldots,\overline M_s=\alpha_sM_s^e,\overline N_1=\beta_1N_1^e,\ldots,\overline N_t=\beta_tN_t^e\in R_0$ are monomials in 
$x_1,\ldots,x_n$. By Proposition \ref{PropZa}, there exists a sequence of PMTs along $\nu$ $R_0\rightarrow R_1$ such that $\overline M_1$ divides $\overline M_i$ in $R_1$ for all $i$, $\overline N_1$ divides $\overline N_j$ for all $j$ in $R_1$ and $\overline M_1$ divides $\overline N_1$ in $R_1$. 
Let $S_0\rightarrow S_1$ be the sequence of PMTs along $\omega$ obtained from applying Proposition \ref{PropSR} to the sequence $R_0\rightarrow R_1$, so that $S_1=R_1[z_1,\ldots,z_m]_\omega$, and $R_1$ has regular parameters $x_1(1),\ldots,x_n(1)$, $S_1$ has regular parameters $y_1(1),\ldots,y_n(1)$  such that there is a unit $\gamma\in S_1$ such that 
$$
x_1(1)=\gamma y_1(1)^e\mbox{ and }x_i(1)=y_i(1)\mbox{ for }2\le i\le n.
$$
Now each of $\overline M_1,\ldots,\overline M_s,\overline N_1,\ldots,\overline N_t$ is a monomial in $y_1(1),\ldots,y_n(1)$ times a unit in $S_1$, so $M_1^e$ divides $M_i^e$ for all $i$ in $S_1$, 
$N_1^e$ divides $N_j^e$ for all $j$ in $S_1$ and $N_1^e$ divides $M_1^e$ in $S_1$. 
Thus $M_1$ divides $M_i$ for all $i$ in $S_1$, 
$N_1$ divides $N_j$ for all $j$ in $S_1$ and $N_1$ divides $M_1$ in $S_1$. 
Since $S_1/\mathfrak m_{S_1}=S_0/\mathfrak m_{S_0}$, we have that $\kappa$ is a coefficient field for $S_1$, and $\widehat S_1=\kappa[[y_1(1),\ldots,y_n(1)]]$. Further, the induced homomorphism
$\widehat{S_0}\rightarrow \widehat{S_1}$ is the natural $\kappa$-algebra homomorphism 
$$
\kappa[[y_1,\ldots,y_n]]\rightarrow \kappa[[y_1(1),\ldots,y_n(1)]]
$$
defined by substitution of the $y_i$ by suitable monomials in $y_1(1),\ldots,y_n(1)$.
Write $g=u_1M_1+u_2M_2+\cdots+u_sM_s$ and $h=v_1N_1+v_2N_2+\cdots+v_tN_t$, with $u_1,\ldots,u_s,v_1,\ldots,v_t\in \widehat{S_0}$ and $u_1,v_1$ units. Now 
$$
g_1=u_1\frac{M_1}{N_1}+u_2\frac{M_2}{N_1}+\cdots+u_s\frac{M_s}{N_1} \in \widehat{S_1}
$$
and
$$
h_1=v_1+v_2\frac{N_2}{N_1}+\cdots+v_t\frac{N_t}{N_1} \in \widehat{S_1}.
$$
so $\frac{g}{h}=g_1h_1^{-1}\in \widehat{S_1}$. Thus by \cite[Lemma 2]{Ab1},
$$
f=\frac{g}{h}\in \widehat{S_1}\cap L=S_1=R_1[z_1,\ldots,z_m]_{\omega}\subset \VR_\nu[z_1,\ldots,z_m]_\omega.
$$

\ \\

\noindent{\footnotesize STEVEN DALE CUTKOSKY\\
 Department of Mathematics,
University of Missouri\\
 Columbia, MO 65211, USA\\
Email:{\tt cutkoskys@missouri.edu}\\\\

\noindent{\footnotesize JOSNEI NOVACOSKI\\
Departamento de Matem\'atica--UFSCar\\
Rodovia Washington Lu\'is, 235\\
13565-905, S\~ao Carlos - SP, Brazil.\\
Email: {\tt josnei@dm.ufscar.br} \\\\

\end{document}